\crefname{hypothesis}{Hypothesis}{Hypotheses}
\title{Conic frameworks infinitesimal rigidity\thanks{Submitted to the editors DATE.
}}
\author{Colin Cros\footnotemark[2]{} \footnotemark[3]
\and Pierre-Olivier Amblard\thanks{Univ. Grenoble Alpes, CNRS, Grenoble-INP, GIPSA-Lab, 38000 Grenoble, FRANCE
	(\email{colin.cros@gipsa-lab.fr}, \email{pierre-olivier.amblard@cnrs.fr}, \email{christophe.prieur@gipsa-lab.fr}).}
\and Christophe Prieur\footnotemark[2]
\and Jean-François Da Rocha\thanks{Telespazio FRANCE, 31100 Toulouse, FRANCE (\email{jeanfrancois.darocha@telespazio.com}).}}
\newcommand{\guillemets}[1]{``#1''}
\newcommand{\diagvect}{\mbox{diag}}
\newcommand{\set}[1]{\left\{#1\right\}}
\newcommand{\diff}{\text{d}}
\newcommand{\R}{\mathbb{R}}
\newcommand{\K}{\mathbb{K}}
\renewcommand{\L}{\mathbb{L}}
\newcommand{\Q}{\mathbb{Q}}
\newcommand{\sign}{\mbox{sign}}
\DeclareMathOperator{\rank}{rank} 
\DeclareMathOperator{\Ker}{Ker}
\newcommand{\rC}{\mathcal{C}}
\newcommand{\rG}{\mathcal{G}}
\newcommand{\rP}{\mathcal{P}}
\newcommand{\norm}[1]{\left\lVert#1\right\rVert}
\newcommand{\card}[1]{\left\lvert#1\right\rvert}
\newcommand{\ie}{\emph{i.e.},}
\newcommand{\eg}{\emph{e.g.},}
\newcommand{\Wlog}{without loss of generality}
\newcommand\restr[2]{{
		\left.\kern-\nulldelimiterspace 
		#1 
		\vphantom{\big|} 
		\right|_{#2} 
}}
\begin{document}

\maketitle

\begin{abstract}
	This paper introduces new structures called conic frameworks and their rigidity. They are composed by agents and a set of directed constraints between pairs of agents. When the structure cannot be flexed while preserving the constraints, it is said to be rigid. If only smooth deformations are considered a sufficient condition for rigidity is called infinitesimal rigidity. In conic frameworks, each agent $u$ has a spatial position $x_u$ and a clock offset represented by a bias $\beta_u$. If the constraint from Agent $u$ to Agent $w$ is in the framework, the pseudo-range from $u$ to $w$, defined as $\norm{x_u - x_w} + \beta_w - \beta_u$, is set. Pseudo-ranges appear when measuring inter-agent distances using a Time-of-Arrival method. This paper completely characterizes infinitesimal rigidity of conic frameworks whose agents are in general position. Two characterizations are introduced: one for unidimensional frameworks, the other for multidimensional frameworks. They both rely on the graph of constraints and use a decoupling between space and bias variables. In multidimensional cases, this new conic paradigm sharply reduces the minimal number of constraints required to maintain a formation with respect to classical Two-Way Ranging methods. 

\end{abstract}

\begin{keywords}
  Rigidity theory, rigid graphs, pseudo-range
\end{keywords}

\begin{AMS}
  52C25, 70B15
\end{AMS}

\section{Introduction}

Consider a group of $n$ agents whose geometric formation has to be maintained, a fleet of Unmanned Aerial Vehicles (UAV) or autonomous cars for example. Maintaining the formation means preserving the relative positions between the agents as the group might move as a rigid entity from one point to another. A natural way to achieve this result is \begin{enumerate*}[label =(\roman*)]
	\item\label{it: choice distance} to select some distances between pairs of agents and,
	\item\label{it: command} to force the agents to preserve these distances using some command \cite{anderson_2008_uav, gazi_2003_stability}.
\end{enumerate*} The choice of the distances in Phase \ref{it: choice distance} must ensure that the formation cannot flex. For example, to maintain a square formation between four agents in the plane, constraining the sides and one diagonal of the square is sufficient. Another solution could be to impose directly the positions of the agents, but this would need an external positioning system, such as the satellite systems GPS or Galileo. The first option is considered here. It is free from external systems but supposes that distances between pairs of agents can be constrained and therefore measured. An efficient way to measure the distance between two agents is to send a signal from one to the other, measure the delay between the emission and the reception, and multiply that delay by the speed of the signal. If the agents' clocks are synchronized, this procedure provides the distance. In general however, this procedure only gives a \emph{pseudo-range} which is the sum of the actual distance between the agents and a bias reflecting the lack of synchronization. This bias is the difference between the clock offsets of the agents (to some absolute reference) multiplied by the speed of the signal. Concretely, consider a pair of agents $(e,r)$ whose positions are $(x_e, x_r)$ and clock offsets are $(\tau_e,\tau_r)$. Denote $\beta_z = c\tau_z$ for $z\in\set{e,r}$ the clock offset premultiplied by the speed of signal $c$. The pseudo-range from $e$ to $r$ is then defined by the asymmetrical map:
\begin{equation}\label{eq: Pseudo-range function}
	r\left(\begin{pmatrix}
		x_e \\ \beta_e
	\end{pmatrix}, \begin{pmatrix}
	x_r \\ \beta_r
\end{pmatrix}\right) = \norm{x_e - x_r} + \beta_r - \beta_e
\end{equation}
Note in particular that there is no absolute value on the bias: it is positive if the emitter's clock is ahead of the receiver's and negative otherwise. Pseudo-ranges were first introduced in the context of hyperbolic positioning systems, see \eg{} \cite[Chap.~2]{kaplan_2017_understanding} for an introduction with satellite systems.

To retrieve the distance between $x_e$ and $x_r$, the pseudo-ranges in both directions are usually summed to compensate for the biases. This technique is called symmetrical double-sided two-way ranging (SDS TWR) \cite{standard_802_15_4a_2007} and is quite used with Ultra WideBand (UWB) technology \cite{sahinoglu_2008_ultra}. To maintain a formation using SDS TWR, each constrained distance is measured using two pseudo-range measurements. This paper introduces a more efficient alternative method by constraining pseudo-ranges instead of distances. In this new scheme, to maintain the formation, in Phase \ref{it: choice distance} pseudo-ranges are chosen instead of distances and the control of Phase \ref{it: command} is adapted to preserve theses pseudo-ranges. This paper only focuses on the selection of the pseudo-ranges in Phase \ref{it: choice distance} and does not deal with the control.

The underlying questions are how many and which pseudo-ranges should be constrained to maintain the formation? These problems are closely associated with the notion of \emph{rigidity} and more precisely of \emph{infinitesimal rigidity}. This notion has been well-studied when considering distances instead of pseudo-ranges \cite{gluck_1975_almost, connelly_1982_rigidity, hendrickson_1992_conditions}.

In particular, the minimal number of distances required to preserve a formation of $n$ agents in $\R^d$ is known and denoted as $S_e(n,d)$ \cite{asimow_rigidity_1978}. Consequently, preserving the same formation using SDS TWR requires at least $2S_e(n,d)$ pseudo-ranges. In contrast, the new scheme requires only $S_e(n,d) + n - 1$ pseudo-ranges. As $S_e(n,d)$ is greater than $n-1$, the new scheme always reduces the minimal number of pseudo-ranges. Furthermore, $S_e(n,d) \sim_n dn$, therefore, when $n$ is large the minimal number of pseudo-range constraints is reduced by a fourth in $\R^2$ and a third in $\R^3$ compared to the SDS TWR method. This new scheme exploits the fact that constraining two symmetrical pseudo-ranges sets both the distance and the bias difference between the agents: the distance is the half of their sum and the bias difference the half of their difference. Therefore, with $2(n-1)$ well-chosen pseudo-range constraints, $n-1$ distances and the bias differences between any pair of agents can be set. Indeed, to set the $n(n-1)/2$ bias differences, it is sufficient to set $n-1$ among them, \eg{} setting $\beta_i - \beta_1$ for $i = 2, \dots, n$ is a solution. Then, $S_e(n,d) - (n-1)$ more distances must be set to preserve the formation. With the bias differences set, pseudo-range constraints are equivalent to distance constraints. Thus, in total $S_e(n,d) + n-1$ pseudo-range constraints are needed. In addition, we prove that the pseudo-ranges can also be chosen without any symmetrical pair.
Our main results state that when the agents are in general positions, \emph{infinitesimal rigidity} of the formation depends only on the underlying graph of pseudo-range constraints denoted $\Gamma$ (and on the ordering of the points in the unidimensional case). When constraining distances instead of pseudo-ranges, infinitesimal rigidity depends also only on the underlying graph of distance constraints \cite{asimow_rigidity_1978} denoted $G$. In both cases, the graph is said to be rigid if it generates infinitesimally rigid formations. We prove that a pseudo-range graph $\Gamma$ is rigid if and only if it is the \emph{union} of two \emph{independent} graphs $H$ and $G$ where $H$ is a connected graph and $G$ is a distance rigid graph. Intuitively, $H$ sets the bias differences while $G$ sets the distances between the agents. Furthermore, we prove that in multidimensional cases, the directions of the pseudo-ranges do not matter as long as both graphs do not constrain the same pseudo-range.

To prove these results, we have designed a new class of structures representing the geometry of the agents and the pseudo-range constraints. We call them \emph{conic frameworks}. They are an extension of the well-studied bar-and-join structures, that we call in the following \emph{Euclidean frameworks} to avoid any confusion. A \emph{flexing} of a framework (conic or Euclidean) is a \emph{motion} distorting the framework while preserving its constraints \cite{asimow_rigidity_1978}. A framework with no flexing is said to be \emph{rigid}: it cannot be distorted. This paper focuses on a weaker notion: \emph{infinitesimal rigidity}, it ensures that no smooth flexing exists. All these notions are properly introduced in \cref{sec: Background}. Then, \cref{sec: Rigidity of unidimensional frameworks} characterizes infinitesimal rigidity of unidimensional conic frameworks and \cref{sec: Rigidity of multidimensional frameworks} infinitesimal rigidity of multidimensional conic frameworks. Finally, \cref{sec: Discussions} discusses these two characterizations.

\section{Preliminaries}\label{sec: Background}

\subsection{Background}
\emph{Rigidity} was first introduced as a mechanical notion to study the stability of bar-and-join structures \cite{laman_1970_graphs, connelly_1982_rigidity}. These structures are composed by nodes linked by incompressible and inextendible bars. A bar-and-join structure is said to be flexible if it can be continuously bent. It is said to be rigid otherwise. For example, in the plane, a square is flexible as it can be turned continuously into a rhombus, whereas a triangle is rigid since the three bars impose the relative positions of the agents. Rigidity has been well-studied for decades \cite{asimow_rigidity_1978}. In modern literature, a bar-and-join structure is usually represented through the compact form of a Euclidean framework: the combination $(G, p)$ of a simple undirected graph $G = (V,E)$, whose vertex set is $V$ and edge set is $E$, and a configuration $p$. For a general background on graph definitions and properties (incidence matrix, connectivity, cycles, etc.), we refer to \cite{bollobas_1998_modern}. The vertex set $V$ of $G$ is associated with the agents while the edge set $E$ represents the distance constraints. The configuration $p$ is a map from $V$ to $\R^d$ associating to each agent $u$ its coordinates $x_u$. When the $dn$ coordinates of the agents are not root of any non-null polynomial with integer coefficients, the configuration and the framework are said to be generic. It is known that rigidity of an Euclidean framework is NP-Hard to prove in general \cite[Chap.~5]{abbott_2008_generalizations}. However, for generic Euclidean frameworks, rigidity is more tractable: it is known that it only depends on the graph \cite{asimow_rigidity_1979}. For a given graph $G$ and a given dimension $d$, either every generic $d$-dimensional framework $(G,p)$ is rigid or none is. Furthermore, complete characterizations of generic rigidity based on the graph exist but only for dimensions $1$ and $2$ \cite{laman_1970_graphs}; until now the question remains open for higher dimensions.

We define conic frameworks as an extension of Euclidean frameworks adapted for the pseudo-range context. The main difference is that the associated graph is directed since it represents pseudo-ranges which are asymmetrical. Furthermore, each agent is now characterized by its position $x \in \R^d$ and its bias $\beta \in \R$ (expressed in units of length for convenience). 
\begin{definition}\label{def: Conic framework}
	A $d$-dimensional \emph{conic framework}  $(\Gamma, p)$ is the combination of a simple directed graph $\Gamma = (V,E)$ and a configuration $p$ from $V$ to $\R^{d+1}$ assigning to each vertex $u \in V$, $p(u) = \begin{pmatrix} x_{u}^\intercal & \beta_{u}\end{pmatrix}^\intercal$ a point of $\R^{d+1}$.
\end{definition}
To avoid confusion, simple undirected graphs are denoted with the Latin letters $G$ or $H$ and their links are referred as edges while simple directed graph are denoted with the Greek letter $\Gamma$ and their links are referred as arcs. Furthermore, even if the range of $p$ belongs to $\R^{d+1}$, we still talk about $d$-dimensional frameworks, $d$ being the spatial dimension. To simplify some expressions, the point $p(u)$ may be denoted in the following as $p_u$.
 
Euclidean frameworks can be considered as particular conic frameworks whose graph $\Gamma$ is symmetric, \ie{} $uw\in E \Leftrightarrow wu\in E$. In this case, $\Gamma$ can therefore be viewed as an undirected graph. Indeed, constraining the two symmetrical pseudo-ranges between a pair of agents is equivalent to constraining their distance and bias difference, that is the idea applied in the SDS TWR method. Several other extensions have already been proposed. For example, \cite{streinu_2007_slider, cruickshank_2020_rigidity, jackson_2021_improved} introduce linear constraints imposing the movement of some agents into some subspace but still in a Euclidean context. Other authors study the consequences of changing the Euclidean distance by other ones induced by norms: \cite{ kitson_2014_infinitesimal} focuses on $p$-norms (with $p \ne 2$), \cite{kitson_2015_finite} on polyhedral norms, \cite{kitson_2020_graph} on unitary invariant matrix norms and \cite{dewar_2020_infinitesimal} more generally on non-Euclidean norms (in the plane). Another interesting extension was introduced in \cite{gortler_2014_generic}. The authors study frameworks in complex and hyperbolic spaces. In the latter, the constraint between a pair of agents (when arranged to match our notations) is:
\begin{equation}\label{eq: Hyperbolic constraint}
	\norm{x_u - x_w}^2 - \left(\beta_u - \beta_w\right)^2
\end{equation}
This expression is very similar to \eqref{eq: Pseudo-range function} but creates a completely different problem. Notably, in \eqref{eq: Hyperbolic constraint} the constraint is symmetrical in $u$ and $w$. Conic frameworks are, as far as we know, a new concept motivated by the use of pseudo-ranges. Some other works deal with asymmetrical rigidity but, to the best of our knowledge, only in the Euclidean case. One notable application is the persistence of flight formations \cite{hendrickx_2007_directed} in which each constraint is preserved by only one agent. This concept is perfectly suited for conic frameworks since pseudo-ranges naturally have a direction.

Similarly to a Euclidean framework, a conic framework is said to be flexible if it can be bent while preserving the constraints, \ie{} the pseudo-ranges on its arcs; it is said to be rigid otherwise. The key notion of \emph{infinitesimal rigidity} is introduced after the following basic examples of conic frameworks.

\subsection{Examples of conic frameworks}

\begin{figure}
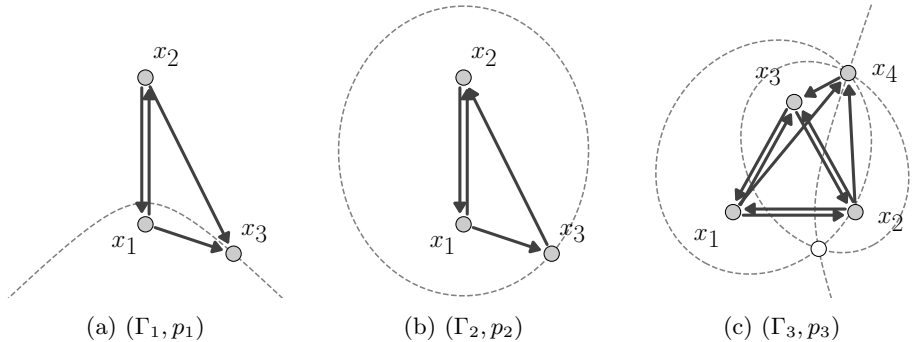

	\centering
	\null\hfill
	\begin{subfigure}[t]{0.30\textwidth}
		\centering
		\resizebox{\linewidth}{!}{\input{fig/conic_graph_hyperbolic_motion.pgf}}
		\caption{$(\Gamma_1, p_1)$}
		\label{sfig: Two arcs hyperbolic constraint illustration}
	\end{subfigure}
	\hfill
	\begin{subfigure}[t]{0.30\textwidth}
		\centering
		\resizebox{\linewidth}{!}{\input{fig/conic_graph_elliptical_motion.pgf}}
		\caption{$(\Gamma_2, p_2)$}
		\label{sfig: Two arcs elliptical constraint illustration}
	\end{subfigure}
	\hfill
	\begin{subfigure}[t]{0.30\textwidth}
		\centering
		\resizebox{\linewidth}{!}{\input{fig/conic_graph_rigid.pgf}}
		\caption{$(\Gamma_3, p_3)$}
		\label{sfig: Rigid framework}
	\end{subfigure}
	\hfill\null
	\caption{Examples of $2$-dimensional frameworks. The bias axis is not represented.}
	\label{fig: Conic frameworks}
\end{figure}

\cref{fig: Conic frameworks} presents three $2$-dimensional conic frameworks, this paragraph investigates their rigidity or flexibility. In the figures, agents are represented by circles, pseudo-range constraints by arrows and, for the sake of clarity, the bias axes are not represented.

First, consider the conic framework in \cref{sfig: Two arcs hyperbolic constraint illustration}. This framework has $4$ arcs corresponding to the pseudo-ranges from $1$ to $2$, from $2$ to $1$, from $1$ to $3$ and from $2$ to $3$; we denote $\rho_{i,j}$ the pseudo-range from $i$ to $j$. They constrain the following equations:
\begin{align}
	\norm{x_1 - x_2} + \beta_2 - \beta_1 &= \rho_{1,2} \label{eq: Conic example PSR 1 to 2}\\
	\norm{x_1 - x_2} + \beta_1 - \beta_2 &= \rho_{2,1} \label{eq: Conic example PSR 2 to 1} \\
	\norm{x_1 - x_3} + \beta_3 - \beta_1 &= \rho_{1,3} \label{eq: Conic example PSR 1 to 3} \\
	\norm{x_2 - x_3} + \beta_3 - \beta_2 &= \rho_{2,3} \label{eq: Conic example PSR 2 to 3}
\end{align}
Since both pseudo-ranges between the Agents $1$ and $2$ are constrained, the first two equations impose that both the distance and the bias difference between agents $1$ and $2$ are constrained: their relative positions and biases are set. Moreover, since the pseudo-ranges $\rho_{1,3}$ and $\rho_{2,3}$ are also constrained, subtracting \eqref{eq: Conic example PSR 1 to 3} from \eqref{eq: Conic example PSR 2 to 3} gives:
\begin{equation}
	\norm{x_1 - x_3} - \norm{x_2 - x_3} = \rho_{1,3} - \rho_{2,3} + \beta_1 - \beta_2 = \text{constant}
\end{equation}
Therefore, the position of Agent $3$ lies on a branch of hyperbola whose foci are $x_1$ and $x_2$. This branch is represented by the dashed line in the figure. Note that the bias $\beta_3$ is obtained by reinjecting the distance into either \eqref{eq: Conic example PSR 1 to 3} or \eqref{eq: Conic example PSR 2 to 3} and is not constant along this line: it decreases as the distances $\norm{x_1 - x_3}$ increases. Moving Agent $3$ along this $3$-dimensional curve of position preserves the four pseudo-range constraints but not the formation. Therefore, this conic framework is flexible.

The second conic framework in \cref{sfig: Two arcs elliptical constraint illustration} is very similar. The only difference lies in the direction of the arc between $2$ and $3$. In \cref{sfig: Two arcs hyperbolic constraint illustration}, $\rho_{2,3}$ was constrained whereas it is now $\rho_{3,2}$. This transforms \eqref{eq: Conic example PSR 2 to 3} to:
\begin{align}
	\norm{x_2 - x_3} + \beta_2 - \beta_3 &= \rho_{3,2} \label{eq: Conic example PSR 3 to 2}
\end{align}
This time, summing \eqref{eq: Conic example PSR 1 to 3} and \eqref{eq: Conic example PSR 3 to 2} gives:
\begin{equation}
	\norm{x_1 - x_3} + \norm{x_2 - x_3} = \rho_{1,3} + \rho_{3,2} + \beta_1 - \beta_2 = \text{constant}
\end{equation}
Consequently, $x_3$ lies on an ellipse, also represented by a dashed line in the figure. Similarly, moving Agent $3$ on this curve of position preserves the pseudo-range constraints therefore, this conic framework is also flexible. These two first examples underline how important the directions of the arcs are: flipping an arc changes the flexing of the framework.

The third conic framework in \cref{sfig: Rigid framework} is more complex. The three agents $1$, $2$ and $3$ are fully connected, therefore, all the distances and bias differences between them are constrained: their relative positions and biases are set. Agent $4$ is connected to each of them by one unique arc. Each pair of arcs constrains the position $x_4$ to lie on a curve which is either a branch of hyperbola or an ellipse depending on whether both arcs point in the same direction or not. These three curves are still represented by dashed lines, they intersect twice: at $x_4$ of course and at a second point represented by a white dot. Those two points are suitable positions for Agent $4$: placing it in one of these loci (with the corresponding bias) satisfies all the constraints. However, Agent $4$ cannot \emph{move} so the conic framework is rigid. Note that at this two loci, the associated bias are different since, for example, the distances to $x_2$ are different.

\subsection{Infinitesimal rigidity of conic frameworks}

This paragraph introduces the notions of \emph{rigidity} and \emph{infinitesimal rigidity} for conic frameworks. They require the following definitions that are adaptations of concepts from Euclidean rigidity theory, see \eg{} \cite{gluck_1975_almost, hendrickson_1992_conditions}.

\begin{definition}\label{def: Congruence}
	Two conic frameworks $(\Gamma, p)$ and $(\Gamma, p')$ are said to be \emph{congruent} if the pseudo-ranges between any pair of agents are equal in both configurations, \ie{} $\forall (u,w) \in V^2$, $r(p_u, p_w) = r(p'_u, p'_w)$.
\end{definition}
\cref{def: Congruence} is equivalent to say that the distances and bias differences between any pair of agents are equal in both configurations. Indeed, the distance $\norm{x_u - x_w}$ and the bias difference $\beta_u - \beta_w$ are linked with the pseudo-ranges $r(p_u, p_w)$ and $r(p_w, p_u)$ by the following invertible system:
\begin{align}
	\norm{x_u - x_w} &= \frac{r(p_u, p_w) + r(p_w, p_u)}{2} & \beta_u - \beta_w &= \frac{r(p_w, p_u) - r(p_u, p_w)}{2}
\end{align}

\begin{definition}
	A \emph{motion} from a conic framework $(\Gamma, p)$ to another conic framework $(\Gamma, p')$ is an application: $P : [0,1] \times V \to \R^{d+1}$ satisfying:
	\begin{enumerate}
		\item $\forall u \in V$, $P(0, u) = p(u)$ and $P(1, u) = p'(u)$;
		\item $\forall u$, $t \mapsto P(t, u)$ is continuous;
		\item $\forall uw \in E$, $\forall t \in [0,1]$, $r\left(P(t, u), P(t, w)\right) = r\left(p(u), p(w)\right)$.
	\end{enumerate}
\end{definition}
Note that motions must only preserve the pseudo-ranges along the arcs of the framework. They may create congruent or non-congruent frameworks. In the latter case, they are called \emph{flexing}.

\begin{definition}
	A \emph{flexing} of a conic framework $(\Gamma, p)$ is a motion starting at $(\Gamma, p)$ such that for any $t > 0$, the frameworks $\left(\Gamma, P(t)\right)$ and $(\Gamma, p)$ are not congruent.
\end{definition}
For example, the frameworks depicted in \cref{sfig: Two arcs hyperbolic constraint illustration} and \cref{sfig: Two arcs elliptical constraint illustration} have a flexing generated by the motion of Agent $3$ along the curve of position. Of course not all motions are flexing, rigid motions (combinations of rotations and translations  of the spatial position) and translations of all the biases are not. These particular motions preserve the formation and are said to be trivial. We can now define rigidity of conic frameworks.
\begin{definition}
	A conic framework with no flexing is said to be \emph{rigid}.
\end{definition}

Infinitesimal rigidity is a strengthening of rigidity at the first order. Consider that the agents are moving smoothly. The position of Agent $u$ at time $t$ is denoted $p_u(t) = \begin{pmatrix} x_u(t)^\intercal & \beta_u(t)\end{pmatrix}^\intercal$ and its instantaneous velocity is denoted $q_u(t) = \begin{pmatrix} v_u(t)^\intercal & \alpha_u(t)\end{pmatrix}^\intercal$ where:
\begin{align} 
	v_u(t) &= \frac{\diff x_u(t)}{\diff t} & \alpha_u(t) &=\frac{\diff \beta_u(t)}{\diff t}
\end{align}
If the agents always keep distinct positions, \ie{} $\forall t$, $u\ne w \Rightarrow x_u(t) \ne x_w(t)$, the distances are also differentiable. Then the preservation of a pseudo-range $\rho_{u,w}$ and its derivative give that $\forall t \in [0,1]$:
\begin{align}
	\norm{x_u(t) - x_w(t)} + \beta_w(t) - \beta_u(t) &= \rho_{u,w} \\
	\frac{\left(x_u(t) - x_w(t)\right)^\intercal \cdot \left(v_u(t) - v_w(t)\right)}{\norm{x_u(t) - x_w(t)}} + \alpha_w(t) - \alpha_u(t) &= 0 \label{eq: Derivative of the preservation of a PSR}
\end{align}
Applying \eqref{eq: Derivative of the preservation of a PSR} at $t = 0$ gives:
\begin{equation}\label{eq: Derivative of the preservation of a PSR in 0}
	\frac{\left(x_u -x_w\right)^\intercal \cdot \left(v_u -v_w\right) }{\norm{x_u - x_w}} + \alpha_w - \alpha_u = 0
\end{equation}
where $p_u = p_u(0)$ and $q_u = q_u(0)$ denote the initial position and velocity of Agent $u$ for the sake of clarity.

The vector $q_u = \begin{pmatrix}v_u^\intercal & \alpha_u\end{pmatrix}^\intercal \in \R^{d+1}$ denotes the instantaneous velocity of Agent~$u$. All these velocities are stacked into a vector $q = \begin{pmatrix} v_1^\intercal & \dots & v_n^\intercal & \alpha_1 & \dots & \alpha_n\end{pmatrix}^\intercal \in \R^{(d+1)n}$ called an instantaneous velocity vector (where $n$ denotes the number of agents). An instantaneous velocity vector is said to be admissible for the conic framework $(\Gamma, p)$ if \eqref{eq: Derivative of the preservation of a PSR in 0} is satisfied for every arc or equivalently:
\begin{align}\label{eq: Arc infinitesimal constraint}
	\forall uw &\in E, & \left(x_u - x_w\right)^\intercal \cdot \left(v_u - v_w\right) + \norm{x_u - x_w} \left(\alpha_w - \alpha_u\right) &= 0
\end{align}
Stacking the equations of \eqref{eq: Arc infinitesimal constraint} gives that $M(\Gamma, p)q = 0$ where $M(\Gamma, p)$ is a matrix called the (conic) rigidity matrix of the framework. This rigidity matrix has $(d+1)n$ columns, one per variable, and $\card{E}$ rows (where $\card{.}$ denotes the cardinal), one per arc. Therefore, the set of admissible instantaneous velocity vectors is a vector space.

The conic rigidity matrix is separated in two blocks. The first $dn$ columns correspond to the spatial variables and form the classical Euclidean rigidity matrix $M_e(\Gamma,p)$, see \eg{} \cite{hendrickson_1992_conditions}. The last $n$ columns correspond to the bias variables and form a matrix $B(\Gamma,p)$ defined as:
\begin{equation}\label{eq: Decomposition B}
	B(\Gamma,p) = D(\Gamma,p)B(\Gamma)
\end{equation}
where $D(\Gamma,p) = \diagvect\left(\dots, \norm{x_u-x_w}, \dots\right)$ is the diagonal matrix whose $i$-th entry is the distance between the points connected by the $i$-th arc and $B(\Gamma)$ is the transpose of the incidence matrix of the graph, see \eg{} \cite{bollobas_1998_modern}. Note that the direction of the arcs only appear in $B(\Gamma)$ and $B(\Gamma, p)$.

With these notations, $M(\Gamma,p) = \begin{bmatrix} M_e(\Gamma,p) & B(\Gamma,p) \end{bmatrix}$. For example, the rigid matrix of the conic framework in \cref{sfig: Two arcs hyperbolic constraint illustration} is:
\begin{equation}\label{eq: Example rigidity matrix}
	M(\Gamma,p) = \begin{bmatrix}
		x^\intercal_1 - x^\intercal_2 & x^\intercal_2 - x^\intercal_1 & 0_d^\intercal & -d_{1,2} & d_{1,2} & 0\\
		x^\intercal_1 - x^\intercal_2 & x^\intercal_2 - x^\intercal_1 & 0_d^\intercal & d_{1,2} & -d_{1,2} & 0\\
		x^\intercal_1 - x^\intercal_3 & 0_d^\intercal & x^\intercal_3 - x^\intercal_1 & -d_{1,3} & 0 & d_{1,3}\\
		0_d^\intercal & x^\intercal_2 - x^\intercal_3 & x^\intercal_3 - x^\intercal_2 & 0 & -d_{2,3} & d_{2,3}
	\end{bmatrix}
\end{equation}
where $d_{u,w} = \norm{x_u - x_w}$ and $0_d$ denotes the null vector of $\R^d$. This decomposition provides a bound on the rank of the conic rigidity matrix:
\begin{align}\label{eq: Bound rank}
	\rank M(\Gamma,p) \le S(n,d) = S_e(n,d) + n - 1
\end{align}
In \eqref{eq: Bound rank}, $n-1$ represents the maximal rank of the incidence matrix (the vector filled with ones always annihilates the incidence matrix). The term $S_e$ is the maximal rank of the Euclidean rigidity matrix \cite{hendrickson_1992_conditions}:
\begin{equation}
	S_e(n,d)=\left\{\begin{array}{ll}
		dn - \binom{d+1}{2} & \text{if } n \ge d + 1\\
		\binom{n}{2} = S_e(n,n-1) & \text{if } n \le d
	\end{array}\right.
\end{equation}

We are now in position to define infinitesimal rigidity in the context of conic frameworks. The following definition generalizes the definition of infinitesimal rigidity in the Euclidean context \cite{asimow_rigidity_1979}.
\begin{definition}\label{def: Infinitesimal rigidity}
	A $d$-dimensional conic framework $(\Gamma,p)$ with $n$ vertices is \emph{infinitesimally rigid} if and only if $\rank M(\Gamma,p) = S(n,d)$.
\end{definition}
The bound $S(n,d)$ on the rank comes from the fact that trivial motions induce instantaneous velocity vectors that are always admissible and are therefore said to be trivial. These trivial vectors form a vector space of dimension at most $\binom{d+1}{2} + 1$ depending on the number of points and their geometry. Indeed, there are $\binom{d}{2}$ spatial rotations, $d$ spatial translations and $1$ bias translation. An equivalent definition for infinitesimal rigidity is the following.
\begin{definition}\label{def: Infinitesimal rigidity alternative}
	A conic framework $(\Gamma,p)$ is infinitesimally rigid if and only if every admissible instantaneous velocity vector is trivial.
\end{definition}

If a conic framework is infinitesimally rigid, the only possible smooth motions lead to congruent frameworks, therefore it cannot be flexed. These definitions are extensions of infinitesimal rigidity of Euclidean frameworks \cite{hendrickson_1992_conditions}:
\begin{definition}\label{def: Euclidean Infinitesimal rigidity}
	A $d$-dimensional Euclidean framework $(G,p)$ with $n$ vertices is \emph{infinitesimally rigid} if and only if $\rank M_e(\Gamma,p) = S_e(n,d)$.
\end{definition}

Finally, the definition of generic configuration is also extended to conic frameworks.
\begin{definition}
	A configuration is said to be \emph{generic} if the set of the $dn$ coordinates of $x_1, \dots, x_n$ are not root of any non-trivial polynomial with integer coefficients.
	
	A conic framework $(\Gamma, p)$ is said to be \emph{generic} if $p$ is generic.
\end{definition}
Note that the definition is independent from the bias.

The remainder of this paper characterizes infinitesimal rigidity of generic conic frameworks. \cref{def: Infinitesimal rigidity} is based on the rank of the rigidity matrix. Computing a rank may be numerically imprecise, \eg{} if the agents are almost aligned, the rigidity matrix may have small eigenvalues that may cause an underestimation of the rank. The characterizations introduced in the following depend only on the graph of the framework. Since graphs are discrete mathematical objects (they can be encoded by arrays of integers), the characterizations do not suffer from the rounding issues that may appeared in a rank computation. 

\section{Infinitesimal rigidity of unidimensional frameworks}\label{sec: Rigidity of unidimensional frameworks}

This section focuses on infinitesimal rigidity of unidimensional conic frameworks, \ie{} $d = 1$. The main result is \cref{the: Unidimensional case rigidity} that provides a necessary and sufficient condition for a conic framework (generic or not) to be infinitesimally rigid in this particular case.

The specificity of the unidimensional case is the natural ordering of $\R$ which orients the arcs. An arc $uw$ is said to be increasing if $x_u < x_w$, decreasing if $x_u > x_w$ and null if $x_u = x_w$. This orientation depends on the configuration but not on the biases. The set of increasing and decreasing arcs are denoted $E_+$ and $E_-$:
\begin{align*}
	E_+ &= \{uw \in E \mid x_u < x_w\} & E_- &= \{uw \in E \mid x_u > x_w\}
\end{align*}
Note that a symmetrical pair of (non-null) arcs is distributed in both $E_+$ and $E_-$. The undirected graphs induced by these two edge sets are denoted $G_+ = (V,E_+)$ and $G_- = (V, E_-)$. They are deduced from $(\Gamma ,p)$ and depend on both $\Gamma$ and $p$ as illustrated in \cref{fig: Illustration unidimensional rigidity theorem}. The connectivities of $G_+$ and $G_-$ provide the main result of this section:

\begin{figure}
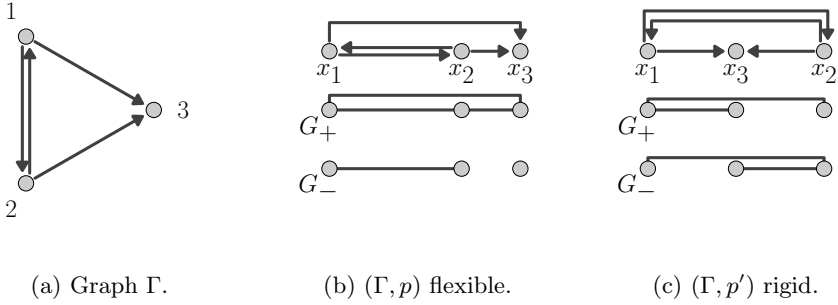

	\centering
	\centering
	\null\hfill
	\begin{subfigure}[t]{0.30\textwidth}
		\centering
		\resizebox{\linewidth}{!}{\input{fig/unidimensional_frameworks_graph.pgf}}
		\caption{Graph $\Gamma$.}
		\label{sfig: Illustration unidimensional rigidity theorem graph}
	\end{subfigure}
	\hfill
	\begin{subfigure}[t]{0.30\textwidth}
		\centering
		\resizebox{\linewidth}{!}{\input{fig/unidimensional_framework_1.pgf}}
		\caption{$(\Gamma, p)$ flexible.}
		\label{sfig: Illustration unidimensional rigidity theorem example 1}
	\end{subfigure}
	\hfill
	\begin{subfigure}[t]{0.30\textwidth}
		\centering
		\resizebox{\linewidth}{!}{\input{fig/unidimensional_framework_2.pgf}}
		\caption{$(\Gamma, p')$ rigid.}
		\label{sfig: Illustration unidimensional rigidity theorem example 2}
	\end{subfigure}
	\hfill\null

	\caption{Example of two unidimensional conic frameworks having the same graph $\Gamma$ (\cref{sfig: Illustration unidimensional rigidity theorem graph}) with their corresponding graphs $G_+$ and $G_-$. \cref{the: Unidimensional case rigidity} states that the conic framework in \cref{sfig: Illustration unidimensional rigidity theorem example 1} is not infinitesimally rigid as $G_-$ is disconnected while the conic framework in \cref{sfig: Illustration unidimensional rigidity theorem example 2} is infinitesimally rigid.}
	\label{fig: Illustration unidimensional rigidity theorem}
\end{figure}

\begin{theorem}\label{the: Unidimensional case rigidity}
	Let $(\Gamma,p)$ be a unidimensional conic framework.
	$(\Gamma,p)$ is infinitesimally rigid if and only if both $G_+$ and $G_-$ are connected.
\end{theorem}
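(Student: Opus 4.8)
The plan is to take advantage of the particularly simple shape of the conic rigidity matrix in dimension $d=1$. First I would record the relevant dimension count: for $d=1$ one has $S(n,1)=S_e(n,1)+n-1=(n-1)+(n-1)=2n-2$, so by \cref{def: Infinitesimal rigidity} together with the rank--nullity theorem, $(\Gamma,p)$ is infinitesimally rigid if and only if $\dim\Ker M(\Gamma,p)=2n-S(n,1)=2$. It therefore suffices to compute the dimension of the space of admissible instantaneous velocity vectors.

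The heart of the argument is a change of variables that decouples the system. For an arc $uw\in E$ with $x_u\ne x_w$, dividing the admissibility condition \eqref{eq: Arc infinitesimal constraint} by $\norm{x_u-x_w}>0$ rewrites it as $\sign(x_u-x_w)(v_u-v_w)+(\alpha_w-\alpha_u)=0$; an arc with $x_u=x_w$ (which can occur only at a non-generic configuration) contributes a zero row and belongs to neither $E_+$ nor $E_-$, so it may be discarded. Setting $s_u=v_u+\alpha_u$ and $t_u=v_u-\alpha_u$ --- an invertible linear change of the $2n$ coordinates --- the condition attached to an increasing arc ($uw\in E_+$) becomes exactly $s_u=s_w$, and the one attached to a decreasing arc ($uw\in E_-$) becomes exactly $t_u=t_w$. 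Hence $\Ker M(\Gamma,p)$ is linearly isomorphic to the set of pairs $(s,t)\in\R^n\times\R^n$ such that $s$ is constant on each connected component of $G_+$ and $t$ is constant on each connected component of $G_-$.

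From this description the conclusion is immediate: the dimension of that set equals the number of connected components of $G_+$ plus the number of connected components of $G_-$. Both graphs have vertex set $V$, so each count is at least $1$, and their sum equals $2$ if and only if both equal $1$, \ie{} if and only if both $G_+$ and $G_-$ are connected. Combining this with the reduction of the first paragraph proves the theorem.

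The computations are elementary linear algebra, so the only real difficulty is spotting the change of coordinates $(v,\alpha)\leftrightarrow(s,t)$ that simultaneously untangles the spatial and the bias blocks of the rigidity matrix; everything else is bookkeeping. The two points that deserve a careful check are that the pairing is the right one --- a symmetric pair of arcs between $u$ and $w$ contributes one edge to $G_+$ and one to $G_-$, and thus forces $v_u=v_w$ and $\alpha_u=\alpha_w$ simultaneously, as it should --- and that null arcs are genuinely inert, which is what makes the statement valid for non-generic unidimensional frameworks as well.
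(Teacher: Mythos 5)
Your proof is correct, and its engine is the same as the paper's: after dividing the arc constraint \eqref{eq: Arc infinitesimal constraint} by $\norm{x_u-x_w}$, an increasing arc forces $v_u+\alpha_u=v_w+\alpha_w$ and a decreasing arc forces $v_u-\alpha_u=v_w-\alpha_w$. Where you differ is in the packaging. The paper proves the two implications separately: connectivity of $G_+$ and $G_-$ propagates both equalities to every pair of vertices, so every admissible velocity is trivial; and if, say, $G_-$ is disconnected along some $U\subset V$, the explicit vector $q_u=\begin{pmatrix}1&-1\end{pmatrix}^\intercal$ on $U$ and $0$ on $U^c$ is a non-trivial admissible velocity. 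You instead make the change of variables $(s,t)=(v+\alpha,v-\alpha)$ global, identify $\Ker M(\Gamma,p)$ with the pairs $(s,t)$ constant on the components of $G_+$ and of $G_-$ respectively, and conclude $\dim\Ker M(\Gamma,p)=c_++c_-$ with $c_\pm$ the component counts; together with rank--nullity and $S(n,1)=2n-2$ this yields both directions at once. Your route buys a closed-form description of \emph{all} infinitesimal flexes and the sharper identity $\rank M(\Gamma,p)=2n-c_+-c_-$, of which the theorem is the case $c_+=c_-=1$; the paper's route buys an explicit witness flex in the non-rigid case. Your two side remarks --- null arcs give zero rows and are inert, and a symmetric pair of arcs lands in both $E_+$ and $E_-$ and hence pins down $v_u=v_w$ and $\alpha_u=\alpha_w$ simultaneously --- are both accurate and consistent with the paper's treatment.
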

\begin{proof}
	First, assume $G_+$ and $G_-$ are both connected and let $q$ be an admissible instantaneous velocity, \ie{} $M(\Gamma,p) q = 0$. Let us prove that $q$ is trivial. In this particular unidimensional case, it means that the vectors $q_u$ are equal as the only trivial motions are translations. Furthermore, in the context, $q$ is admissible for a framework if it satisfies:
	\begin{align}\label{eq: Arc infinitesimal constraint unidimensional case}
		\forall uw &\in E, & (x_u - x_w) \left[v_u - v_w - \sign{(x_u-x_w)} (\alpha_u - \alpha_w)\right] &= 0
	\end{align}
	where $\sign$ is the sign function. This constraint is the simplification of \eqref{eq: Arc infinitesimal constraint} in the unidimensional case.
	Consequently, an increasing arc $uw \in E$, imposes $v_u + \alpha_u = v_w + \alpha_w$, and  a decreasing arc $uw\in E$ imposes $v_u - \alpha_u = v_w - \alpha_w$. Since both $G_+$ and $G_-$ are connected, both equalities hold for any pair of vertices $(u,w) \in V^2$. Consequently, for any pair $(u,w) \in V^2$, $v_u = v_w$ and $\alpha_u = \alpha_w$, \ie{} $q$ is trivial. 
	
	Conversely, suppose \Wlog{}, that $G_-$ is not connected and let us prove that there exists a non-trivial instantaneous velocity vector. Since $G_-$ is not connected, there is a non-trivial subset of vertices $U \in \rP(V)\setminus\{\emptyset, V\}$ (where $\rP(V)$ denotes the power set of $V$) such that $U$ and $U^c=V\setminus U$ are not connected in $G_-$. Let $q$ be the instantaneous velocity vector defined as: $q_u = \begin{pmatrix}1 & -1\end{pmatrix}^\intercal$ if $u \in U$ and $q_u = 0$ if $u\in U^c$. As $\card{U}\ge 1, \card{U^c} \ge 1$, $q$ is not trivial. Let us prove that $q$ is admissible for $(\Gamma, p)$, \ie{} $M(\Gamma,p) q = 0$. Clearly, $q$ satisfies all the arc constraints between any pair of vertices of $U$ and any pair of vertices of $U^c$. Let us then consider, \Wlog{}, an arc $uw \in E$ with $u \in U$ and $w \in U^c$, the case $w \in U$ and $u \in U^c$ would be symmetrical. The arc $uw$ can not be decreasing since $U$ and $U^c$ are not connected in $G_-$. If $uw$ is null, $q$ satisfies the constraint. Then, suppose that $uw$ is increasing. Equation \eqref{eq: Arc infinitesimal constraint unidimensional case} becomes:
	\begin{equation}
		\norm{x_u - x_w}(v_u - v_w + \alpha_u - \alpha_w) = \norm{x_u - x_w}(1 - 1) = 0
	\end{equation}
	Thus, $q$ is admissible and the framework is not infinitesimally rigid.
\end{proof}

This theorem gives a simple and efficient way to numerically check whether a unidimensional conic framework is infinitesimally rigid. To verify if a conic framework is infinitesimally rigid, \begin{enumerate*}[label =(\roman*)]
	\item decompose $E$ into $E_+$ and $E_-$ and \item test the connectivity of $G_+$ and $G_-$.
\end{enumerate*}
Both phases can be computed with $O(|E|)$ operations, see \eg{} \cite{even_2011_graph}. 

Furthermore, as $G_+$ and $G_-$ depend on the configuration $p$, infinitesimal rigidity of unidimensional conic frameworks is not a generic property of their graph. For example, the conic frameworks in \cref{fig: Illustration unidimensional rigidity theorem} have the same graph but only one is infinitesimally rigid. This is a major difference compared to Euclidean frameworks \cite{hendrickson_1992_conditions} or multidimensional conic frameworks as explained in \cref{sec: Rigidity of multidimensional frameworks}.
 
\section{Rigidity of multidimensional frameworks} \label{sec: Rigidity of multidimensional frameworks}

This section focuses on infinitesimal rigidity of multidimensional conic frameworks, \ie{} $d \ge 2$. It contains two main results: \cref{the: Multidimensional case rigidity} and \cref{the: Decomposition of conic rigid graph}. 

\subsection{Statement of the main results}

To clearly state the two theorems, some additional definitions are required.

Two simple directed graphs with the same vertex set are \emph{equivalent} if they have the same underlying undirected multi-graph. This multi-graph may have simple and double edges. We call \emph{conic graph} an undirected multi-graph $\tilde \Gamma = (V,E_S,E_D)$ associated with such an equivalence class. $E_S$ is the set of simple edges and $E_D$ the set of double edges. \cref{fig: Example of two equivalent graphs} presents an example of two equivalent directed graphs with the associated conic graph. For a conic framework $(\Gamma, p)$, if the equivalence class of $\Gamma$ is $\tilde \Gamma$, we say that $\tilde \Gamma$ is the conic graph of the framework. We are now in position to state the first theorem.
\begin{figure}
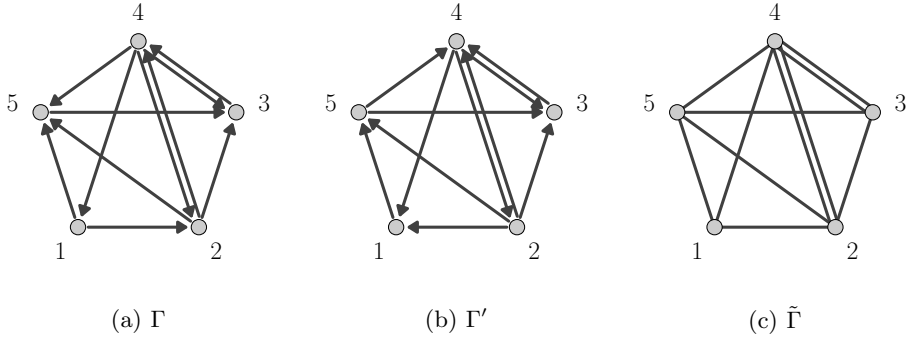

	\centering
	\null\hfill
	\begin{subfigure}[t]{0.30\textwidth}
		\centering
		\resizebox{\linewidth}{!}{\input{fig/example_equivalent_graphs_1.pgf}}
		\caption{$\Gamma$}
	\end{subfigure}
	\hfill
	\begin{subfigure}[t]{0.30\textwidth}
		\centering
		\resizebox{\linewidth}{!}{\input{fig/example_equivalent_graphs_2.pgf}}
		\caption{$\Gamma'$}
	\end{subfigure}
	\hfill
	\begin{subfigure}[t]{0.30\textwidth}
		\centering
		\resizebox{\linewidth}{!}{\input{fig/example_equivalent_graphs_class.pgf}}
		\caption{$\tilde \Gamma$}
		\label{sfig: Example conic graph}
	\end{subfigure}
	\hfill\null
	\caption{Example of two equivalent directed graphs $\Gamma$ and $\Gamma'$ with the conic graph $\tilde \Gamma$ associated with their equivalence class. Only the arcs from $1$ to $2$ and from $4$ to $5$ have been reversed from $\Gamma$ to $\Gamma'$. The set $E_D$ of double edges is represented by double lines.}
	\label{fig: Example of two equivalent graphs}
\end{figure}

\begin{theorem}\label{the: Multidimensional case rigidity}
	Let $d\ge 2$ and $\tilde \Gamma$ be a conic graph. Either every generic $d$-dimensional conic framework whose underlying conic graph is $\tilde \Gamma$ is infinitesimally rigid or none of them is. In this former case, $\tilde \Gamma$ is said to be rigid in $\R^d$.
\end{theorem}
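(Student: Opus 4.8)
The plan is to transpose to the conic setting the classical argument by which generic rigidity of a Euclidean framework depends only on its graph~\cite{asimow_rigidity_1979}. The starting observation is~\eqref{eq: Bound rank}: since $\rank M(\Gamma,p)\le S(n,d)$ for \emph{every} configuration, a framework is infinitesimally rigid exactly when its rigidity matrix attains the largest rank possible among all configurations of its directed graph. So the theorem reduces to two claims. First, (i) for a fixed directed graph $\Gamma$ the value $\rank M(\Gamma,p)$ is the same at every generic $p$, namely $S^\ast_\Gamma:=\max_{p}\rank M(\Gamma,p)$. Second, (ii) when $d\ge 2$, the number $S^\ast_\Gamma$ is unchanged when the orientation of a single edge of $\Gamma$ is reversed, hence depends only on the conic graph $\tilde\Gamma$. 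Granting (i) and (ii), a generic framework with conic graph $\tilde\Gamma$ is infinitesimally rigid if and only if $S^\ast_{\tilde\Gamma}=S(n,d)$, which is a property of $\tilde\Gamma$ alone; this is the statement.

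Claim (i) is the classical part, the one wrinkle being that the bias block of $M(\Gamma,p)$ carries the entries $\pm\norm{x_u-x_w}$, which are not polynomial in $p$, so one cannot directly say that the locus where $M(\Gamma,p)$ loses rank is algebraic over $\mathbb Q$. I would remove the square roots by passing to the polynomial map
\begin{equation*}
	h_\Gamma\colon (p,\rho)\ \longmapsto\ \Bigl(\norm{x_u-x_w}^2-\bigl(\rho_{uw}+\beta_u-\beta_w\bigr)^2\Bigr)_{uw\in E},
\end{equation*}
whose Jacobian with respect to $p$, evaluated at any consistent pair (one with $\rho_{uw}=\norm{x_u-x_w}+\beta_w-\beta_u$), equals $2M(\Gamma,p)$ — this is exactly the differentiation carried out in~\eqref{eq: Derivative of the preservation of a PSR in 0}. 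Working on the real algebraic variety $W=h_\Gamma^{-1}(0)$, the rank of this polynomial Jacobian is constant and maximal on a Zariski-open dense subset, with complement defined over $\mathbb Q$, of the irreducible component of $W$ carrying the physically consistent configurations; and a generic $p$ makes $(p,f_\Gamma(p))$ a generic point of that component, since the pseudo-ranges $f_\Gamma(p)$ are algebraic over $\mathbb Q(p)$. Therefore $\rank M(\Gamma,p)=S^\ast_\Gamma$ at every generic $p$, and, the rank being lower semicontinuous, $S^\ast_\Gamma$ is attained throughout a Zariski-open dense set of configurations.

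For claim (ii) it suffices to compare $\Gamma$ with the graph $\Gamma'$ obtained from it by reversing one single edge $e_0=\{u_0,w_0\}$, since equivalent directed graphs differ by reversals of single edges only. The spatial block $M_e(\cdot,p)$ does not depend on arc orientations, and $B(\Gamma')$ differs from $B(\Gamma)$ only by the sign of the $e_0$-row, so $M(\Gamma',p)$ is $M(\Gamma,p)$ with the bias part of that one row negated. When one endpoint of $e_0$, say $u_0$, has degree $1$ in $\tilde\Gamma$, the reflection $x_{u_0}\mapsto 2x_{w_0}-x_{u_0}$ negates the \emph{whole} $e_0$-row of $M(\Gamma,p)$ while fixing every other row, so it turns $M(\Gamma,p)$ into $M(\Gamma',p')$ up to a sign on one row, with $p'$ generic whenever $p$ is; hence $S^\ast_\Gamma=S^\ast_{\Gamma'}$ in that case. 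The general case, with both endpoints of $e_0$ of degree at least $2$, is the main obstacle: there is no rigid motion absorbing the flip, and genericity of $p$ alone does not settle it, because whether the common generic rank of the family of matrices interpolating between $M(\Gamma,p)$ and $M(\Gamma',p)$ (scale the bias part of the $e_0$-row by a parameter, which keeps each relevant minor affine in that parameter) is actually attained at the two endpoints is itself a constraint on $p$. This is precisely where the hypothesis $d\ge2$ enters, and where the decoupling of the spatial and bias blocks pays off: the subsequent analysis — the same that underlies \cref{the: Decomposition of conic rigid graph} — produces, for $d\ge2$, an explicit combinatorial description of $S^\ast_\Gamma$ in terms of the simple and double edges of $\tilde\Gamma$ only, which gives (ii) at a stroke. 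For $d=1$ the analogous statement is false, as \cref{fig: Illustration unidimensional rigidity theorem} already exhibits.
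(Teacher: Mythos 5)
Your reduction of the theorem to the two claims (i) the generic rank $S^\ast_\Gamma$ is attained at every generic configuration, and (ii) $S^\ast_\Gamma$ is invariant under reversing a single arc, is the right skeleton, and your real-algebraic-geometry route for (i) (passing to the polynomial system $h_\Gamma$ on $(p,\rho)$-space so that the Jacobian becomes $2M(\Gamma,p)$ on the consistent locus) is a workable, if sketchy, alternative to what the paper does. The paper instead makes the minors themselves into elements of an explicit field $\L(E)$ generated over the rational function field $\K$ by the distance functions $D_{u,w}$ (\cref{lem: Field extension}), proves that a generic point annihilating an element of $\L$ forces that element to vanish identically (\cref{lem: Extension generic nullity}), and deduces (i) as \cref{lem: Generic configuration maximizes the rank}. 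Your sketch would still need care on the points you gloss over: irreducibility over $\Q$ of the Zariski closure of the consistent component, and the fact that the paper's notion of generic constrains only the $dn$ spatial coordinates (harmless here only because $M(\Gamma,p)$ does not involve the biases at all).

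The genuine gap is claim (ii), which is the heart of the theorem, and your proposal does not prove it. You settle only the case where an endpoint of the reversed edge has degree one (via a reflection), explicitly concede that the general case is ``the main obstacle,'' and then defer it to ``the analysis that underlies \cref{the: Decomposition of conic rigid graph}.'' That deferral is circular: the decomposition theorem and the very definition of an independent/rigid \emph{conic graph} presuppose \cref{the: Multidimensional case rigidity}, since without it the generic rank is not yet known to be a function of $\tilde\Gamma$. The missing idea is the following, and it is exactly what the field structure buys. Reversing the arc $uw$ changes $M(\Gamma,p)$ only by negating the entries that carry the factor $D_{u,w}$; expanding any minor $f_N$ in the natural basis $\set{\prod_{uw\in F}D_{u,w} \mid F\in\rP(E)}$ of $\L(E)$ as a $\K$-vector space, each coordinate of the corresponding minor $f'_N$ of $M(\Gamma',p)$ equals $\pm$ the coordinate of $f_N$, so $f_N=0$ if and only if $f'_N=0$; combined with \cref{lem: Extension generic nullity} this gives $\rank M(\Gamma,p)=\rank M(\Gamma',p)$ at every generic $p$ (\cref{lem: Generic configuration equivalent rank}), with no case distinction on degrees. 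The hypothesis $d\ge2$ enters precisely in \cref{lem: Field extension}: for $d=1$ one has $D_{u,w}^2=(x_u-x_w)^2\in\K^{(2)}$, the tower of quadratic extensions collapses, the sign-flip argument fails, and indeed orientation does matter in dimension one (\cref{fig: Illustration unidimensional rigidity theorem}). Without an argument of this kind (or some substitute), your step (ii) remains unproved and the proposal does not establish the theorem.
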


\cref{the: Multidimensional case rigidity} implies that two generic conic frameworks having equivalent graphs have the same infinitesimal rigidity. For example, in \cref{fig: Conic frameworks} of \cref{sec: Background}, the graphs $\Gamma_1$ and $\Gamma_2$ are equivalent and neither the conic frameworks $(\Gamma_1, p_1)$ nor $(\Gamma_2, p_2)$ is infinitesimal rigid.

The second main result characterizes rigidity of conic graphs. It involves a \emph{decomposition} of the conic graph into two special \emph{Euclidean graphs}.
We call a Euclidean graph a simple undirected graph---such graphs are associated with Euclidean frameworks. Given two Euclidean graphs $G_1 = (V, E_1)$ and $G_2 = (V, E_2)$ with the same set of vertices, we define their \emph{union} as the conic graph $\tilde \Gamma = G_1 \cup G_2 = (V,E_S,E_D)$ obtained by the union of their edge sets: $E_S = E_1 \triangle E_2$ (symmetric difference) and $E_D = E_1 \cap E_2$. The pair $(G_1, G_2)$ is called a decomposition of $\tilde \Gamma$ and is generally not unique. For example, \cref{fig: Decomposition conic graph} provides three different decompositions of the conic graph of \cref{sfig: Example conic graph}.
\begin{figure}
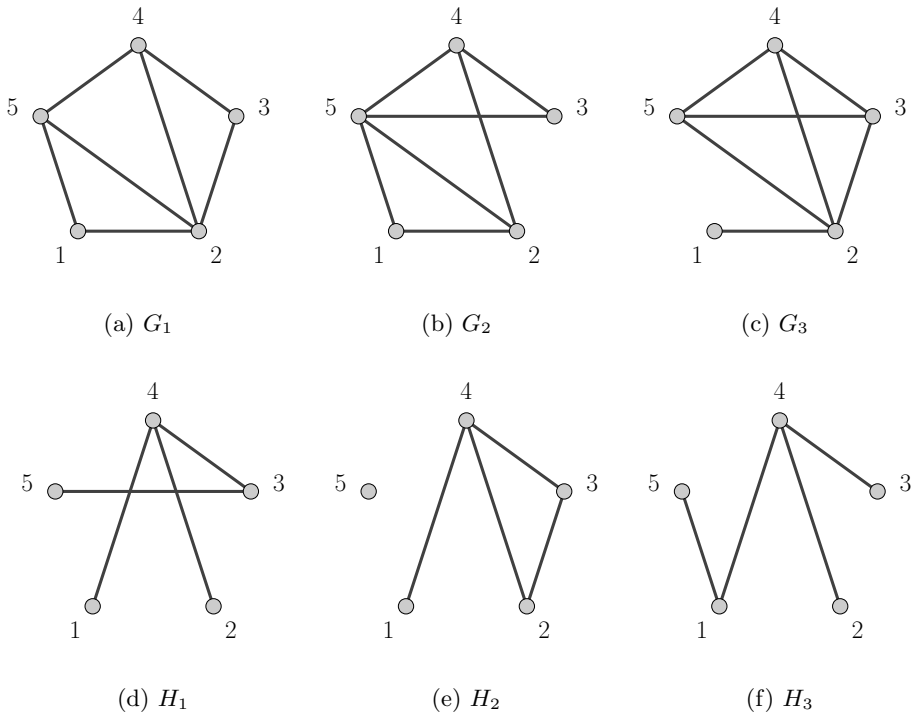

	\centering
	\null\hfill
	\begin{subfigure}[t]{0.30\textwidth}
		\centering
		\resizebox{\linewidth}{!}{\input{fig/decomposition_conic_original_rigid.pgf}}
		\caption{$G_1$}
	\end{subfigure}
	\hfill
	\begin{subfigure}[t]{0.30\textwidth}
		\centering
		\resizebox{\linewidth}{!}{\input{fig/decomposition_conic_counterexample1_rigid.pgf}}
		\caption{$G_2$}
	\end{subfigure}
	\hfill
	\begin{subfigure}[t]{0.30\textwidth}
		\centering
		\resizebox{\linewidth}{!}{\input{fig/decomposition_conic_counterexample2_rigid.pgf}}
		\caption{$G_3$}
	\end{subfigure}
	\hfill\null\\
	\null\hfill
	\hfill
	\begin{subfigure}[t]{0.30\textwidth}
		\centering
		\resizebox{\linewidth}{!}{\input{fig/decomposition_conic_original_spanning_tree.pgf}}
		\caption{$H_1$}
	\end{subfigure}
	\hfill
	\begin{subfigure}[t]{0.30\textwidth}
		\centering
		\resizebox{\linewidth}{!}{\input{fig/decomposition_conic_counterexample1_spanning_tree.pgf}}
		\caption{$H_2$}
	\end{subfigure}
	\hfill
	\begin{subfigure}[t]{0.30\textwidth}
		\centering
		\resizebox{\linewidth}{!}{\input{fig/decomposition_conic_counterexample2_spanning_tree.pgf}}
		\caption{$H_3$}
	\end{subfigure}
	\hfill\null
	\caption{Examples of decompositions of the conic graph $\tilde \Gamma$ of \cref{sfig: Example conic graph}. Each column presents a decomposition of $\tilde \Gamma$ as $G \cup H$.}
	\label{fig: Decomposition conic graph}
\end{figure}
Similarly to conic graphs, a Euclidean graph $G$ is said to be rigid in $\R^d$ if the $d$-dimensional generic Euclidean frameworks whose graph is $G$ are infinitesimally rigid \cite{hendrickson_1992_conditions}. Indeed, infinitesimal rigidity of Euclidean frameworks also depends only on the graphs of the frameworks \cite{asimow_rigidity_1978}. We can now state the second theorem.

\begin{theorem}\label{the: Decomposition of conic rigid graph}
	Let $d \ge 2$ and $\tilde \Gamma$ be a conic graph.
	$\tilde \Gamma$ is rigid in $\R^d$ if and only if there exists $(G, H)$ a decomposition of $\tilde \Gamma$ such that $G$ is rigid in $\R^d$ and $H$ is connected.  
\end{theorem}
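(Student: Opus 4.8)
The plan is to recast the statement as a rank computation for the conic rigidity matrix and then to identify the matroid of its rows with a matroid union. By \cref{def: Infinitesimal rigidity} and \cref{the: Multidimensional case rigidity}, $\tilde\Gamma$ is rigid in $\R^d$ if and only if $\rank M(\Gamma,p) = S(n,d) = S_e(n,d) + n - 1$ for one (equivalently, every) generic conic framework with conic graph $\tilde\Gamma$, for one fixed choice of orientation $\Gamma$; so I fix generic spatial positions and argue about this single matrix. The first step is to normalize: dividing the row of $M(\Gamma,p)$ associated with an arc $uw$ by the distance $d_{uw}=\norm{x_u-x_w}$ turns its Euclidean block into the unit vector $n_{uw}=(x_u-x_w)/d_{uw}$ and its bias block into a signed incidence row with entries in $\{-1,0,+1\}$. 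This row scaling leaves the rank unchanged, and the resulting matrix $\hat M$ is exactly the Euclidean rigidity matrix in $\R^{d+1}$ whose edge direction along $uw$ is $(n_{uw}^\intercal,-1)^\intercal$. In particular the matroid of the Euclidean block of $\hat M$ is the generic $d$-dimensional rigidity matroid $\mathcal R_d$ of $\tilde\Gamma$ (a double edge giving a dependent pair), and the matroid of the bias block is the graphic matroid $\mathcal M$ of $\tilde\Gamma$.

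The core step is to prove that the matroid of the rows of $\hat M$ is the \emph{matroid union} $\mathcal R_d\vee\mathcal M$: a set $A$ of arcs has linearly independent rows in $\hat M$ if and only if $A$ can be partitioned into a set independent in $\mathcal R_d$ and a forest. One inclusion is immediate from the block structure (split any independent subset of $A$ into those arcs whose Euclidean rows we keep and those whose incidence rows we keep). For the converse I would invoke genericity of the spatial coordinates over $\Q$ --- the field generated by the entries of the incidence block --- so that the standard ``direct sum'' realization of the matroid union of two representable matroids applies: a maximal minor of $\hat M$ selecting a candidate independent set factors, after row and column operations keyed to the split, as a product of a minor of the Euclidean block and a minor of the incidence block, and one checks that this is not the zero polynomial in the coordinates, for instance by specializing the two parts of the split independently, or by induction on the number of single edges. \textbf{This genericity lemma is the main obstacle.} The delicate point is that the Euclidean block is \emph{not} given by algebraically independent entries (the $n_{uw}$ lie on a sphere and obey Cayley--Menger relations), so one must argue that it is nevertheless generic \emph{relative to} the rational incidence block; this is exactly what makes the ``direct sum = matroid union'' principle go through here, and it must be established carefully rather than quoted verbatim.

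Granting the core step, the matroid union theorem yields
\begin{equation*}
  \rank M(\Gamma,p) \;=\; \max_{A_1\sqcup A_2 = E}\bigl(\rank_{\mathcal R_d}(A_1)+\rank_{\mathcal M}(A_2)\bigr),
\end{equation*}
the maximum being over partitions of the arc set $E$ into two parts. A short exchange argument shows this maximum is attained by a partition in which the two arcs of every double edge are separated (placing a second parallel arc on the same side is redundant in both $\mathcal R_d$ and $\mathcal M$, so moving it across weakly increases both terms). Such partitions are precisely the decompositions $(G,H)$ of $\tilde\Gamma$ in the sense of the statement, so $\rank M(\Gamma,p)=\max_{(G,H)}\bigl(\rank_{\mathcal R_d}(G)+\rank_{\mathcal M}(H)\bigr)$.

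Finally I would close the loop with the two classical facts: $\rank_{\mathcal R_d}(G)\le S_e(n,d)$ with equality exactly when $G$ is rigid in $\R^d$ (this is \cref{def: Euclidean Infinitesimal rigidity} read through the Euclidean rigidity matroid), and $\rank_{\mathcal M}(H)\le n-1$ with equality exactly when $H$ is connected. Hence $\rank M(\Gamma,p)=S_e(n,d)+n-1$ if and only if some decomposition $(G,H)$ attains both upper bounds simultaneously, i.e.\ if and only if there is a decomposition $(G,H)$ of $\tilde\Gamma$ with $G$ rigid in $\R^d$ and $H$ connected, which is the claimed characterization. The ``if'' direction of the theorem is the easy half once the rank formula is in hand (exhibit the witnessing decomposition), and the ``only if'' direction is the extraction of such a decomposition from a row basis of $M(\Gamma,p)$ --- both are immediate from the displayed maximum, so essentially all the work is concentrated in the genericity lemma of the second paragraph.
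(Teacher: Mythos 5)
Your reorganization of the theorem around the identity $\rank M(\Gamma,p)=\max_{A_1\sqcup A_2=E}\bigl(\rank_{\mathcal{R}_d}(A_1)+\rank_{\mathcal{M}}(A_2)\bigr)$ is sound in outline, and one half of it is genuinely cheap: since the rank of a concatenation of column blocks is at most the sum of the ranks of the blocks, every subset $B$ of a row-independent arc set satisfies $\rank_{\mathcal{R}_d}(B)+\rank_{\mathcal{M}}(B)\ge\card{B}$, so by the matroid-union rank formula that arc set splits into an $\mathcal{R}_d$-independent part and a forest. This gives the ``only if'' direction of \cref{the: Decomposition of conic rigid graph} almost for free, whereas the paper constructs the decomposition by hand through \cref{lem: Existence of minimially rigid Euclidean sub-graph}, \cref{lem: Existence swap} and \cref{lem: Induction decomposition}. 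That observation has real value and is a genuinely different route for that implication.

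The gap is the reverse inclusion, which you correctly isolate as ``the main obstacle'' but do not prove, and on which the entire ``if'' direction rests: you must show that whenever $A_1$ is independent in $\mathcal{R}_d$ and $A_2$ is a forest, the rows $A_1\cup A_2$ of the conic rigidity matrix are linearly independent. This is exactly \cref{lem: Independence union minimally rigid with forest}, the hardest step of the paper, and your sketched justifications for it do not hold up. By Laplace expansion along the column split, the relevant maximal minor is a \emph{sum} of products of complementary minors of the Euclidean and incidence blocks, not a single product, so ``factors as a product of a minor of each block'' is false unless you first produce the column operations that block-triangularize the matrix relative to the partition --- and producing them is precisely the Schur-complement computation in the paper's proof. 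Likewise, ``specializing the two parts of the split independently'' is unavailable: after your normalization the incidence block is constant and the Euclidean block consists of unit vectors whose entries are algebraically dependent, so there is no free parameter separating the blocks (nor is $\hat M$ a Euclidean rigidity matrix in $\R^{d+1}$, since no bias assignment can satisfy $\beta_u-\beta_w=-\norm{x_u-x_w}$ around a triangle). What actually closes this step in the paper is the field-theoretic input of \cref{lem: Field extension} and \cref{lem: Extension generic nullity}: for $d\ge2$ the $2^m$ products of distinct distance functions are linearly independent over $\K$, which lets one isolate the coefficient of $\prod_{uw\in E_H}D_{u,w}$ in the determinant and verify it equals $1$ (\cref{lem: Inversibility algebric}). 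This is also exactly where the hypothesis $d\ge2$ enters; for $d=1$ the statement fails, consistently with \cref{the: Unidimensional case rigidity}. Until you supply a proof of this transversality statement, your argument is an outline rather than a proof.
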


If a conic graph is rigid, \cref{the: Decomposition of conic rigid graph} solely implies that at least one decomposition satisfies the two hypotheses. It does not imply that any decomposition of a rigid conic graph does. For example, the conic graph $\tilde \Gamma$ represented in \cref{sfig: Example conic graph} is rigid in $\R^2$ since the decomposition $(G_1, H_1)$ in \cref{fig: Decomposition conic graph} satisfies the hypotheses: $G_1$ is rigid in $\R^2$ and $H_1$ is connected. However, the other decompositions $(G_2, H_2)$ and $(G_3, H_3)$ do not satisfy one of the properties: in the former $H_2$ is not connected and the latter $G_3$ is not rigid in $\R^2$. Note that finding a decomposition satisfying the two hypotheses is not trivial. Even if $\tilde \Gamma$ is rigid and $G$ is \emph{minimally rigid} (\ie{} becomes flexible after removing any edge), the resulting graph $H$ may be unconnected, \eg{} consider the decomposition $(G_2, H_2)$ in \cref{fig: Decomposition conic graph}. Similarly, if $\tilde \Gamma$ is rigid and $H$ is minimally connected (\ie{} is a spanning tree), $G$ may be flexible, \eg{} consider the decomposition $(G_3, H_3)$ in \cref{fig: Decomposition conic graph}.

The proofs of \cref{the: Multidimensional case rigidity} and \cref{the: Decomposition of conic rigid graph} are presented in the following sub-sections.

\subsection{Proof of \cref{the: Multidimensional case rigidity}}

\cref{the: Multidimensional case rigidity} states that infinitesimal rigidity of generic conic frameworks depends only on their conic graph. This section proves a slightly stronger result: the rank of the rigidity matrix of a generic conic framework depends only on its conic graph. The approach is similar to the one used for Euclidean frameworks in \eg{} \cite{hendrickson_1992_conditions}. First, remind that the rank may be defined as the order of a highest order non-vanishing minor. For conic rigidity matrices, the minors can be viewed as functions of the coordinates of the points. Contrary to the minors of Euclidean rigidity matrices, these minor functions are not polynomial in the coordinates but belong to some larger space we call $\L$. The sketch of the proof is as follows. First, \cref{lem: Field extension} characterizes the space $\L$. Then, \cref{lem: Extension generic nullity} proves that if a generic configuration annihilates a minor function, then the minor function is the null function.  As a consequence, \cref{lem: Generic configuration maximizes the rank} proves that the generic configurations maximize the rank of the rigidity matrix over all the possible configurations. Finally, by exploiting the structure of $\L$, \cref{lem: Generic configuration equivalent rank} proves similarly that this maximum generic rank is a property not only of the graph of the framework but of its conic graph. This section ends with the detailed proof of \cref{the: Multidimensional case rigidity}.

The entries of the rigidity matrix of a conic framework with $n$ agents are function of the $dn$ coordinates of the points: $\set{x_u^{(i)} \mid u \in V \text{ and } i \in \set{1, \dots, d}}$ where $x_u^{(i)}$ denotes the $i$-th coordinate of Agent $u$. These entries are either a linear function or a distance function, see \eqref{eq: Example rigidity matrix} for an example of a conic rigidity matrix. Therefore the minor functions of the conic rigidity matrix are functions of these $dn$ coordinates.

More precisely, for a conic framework $(\Gamma, p)$ whose underlying conic graph is $\tilde \Gamma = (V, E_S, E_D)$, the minor functions belong to the space $\L = \L(E_S \cup E_D)$ defined as follows. For any set of edges $E \subset \{uw \mid 1 \le u < w \le n\}$, the space $\L(E)$ is defined as:
\begin{equation}\label{eq: Definition of L}
	\L(E) = \set{ \sum_{F \in \rP(E)} P_F \prod_{uw \in F} D_{u,w} \mid \forall F, P_F \in \K }
\end{equation}
where $\K = \Q\left(X_1^{(1)}, \dots, X_n^{(d)}\right)$ is the field of rational functions with integer coefficients in $dn$ variables, $\rP(E)$ denotes the power set of $E$ and, the distance $D_{u,w}$ is the function in $dn$ variables:
\begin{equation}
	D_{u,w} : \left\{
	\begin{array}{rcl}
		\R^{dn} & \to & \R \\
		x_1^{(1)}, \dots, x_n^{(d)} & \mapsto & \sqrt{\sum_{i=1}^{d} \left(x_u^{(i)} - x_w^{(i)}\right)^2 }
	\end{array}
	\right.
\end{equation}

The definitions of $\L(E)$ and $\K$ may appear excessively complex. They have been both chosen to provide a field structure to $\L(E)$ as explained in the following lemma.
\begin{lemma}\label{lem: Field extension}
	Let $d\ge 2$, $E \subset \{uw \mid 1 \le u < w \le n\}$ be a set of edges and $m = \card{E}$. Then, $\L(E) / \K $ is a field extension of degree $2^m$. Furthermore, the family $\set{\prod_{uw \in F}D_{u,w} \mid F \in \rP(E)}$ is a basis of $\L(E)$ viewed as a $\K$-vector space. We call this basis the \emph{natural basis} of $\L(E)$. 
\end{lemma}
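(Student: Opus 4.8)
The plan is to prove \cref{lem: Field extension} by induction on $m = \card{E}$, building up the field $\L(E)$ one square root at a time and showing at each step that the new square root is not already present in the field constructed so far. Concretely, order the edges of $E$ as $u_1w_1, \dots, u_mw_m$ and set $\L_0 = \K$, $\L_j = \L_j(\{u_1w_1, \dots, u_jw_j\})$. The key claim is that $\L_j = \L_{j-1}\bigl(D_{u_j,w_j}\bigr)$ and that $\bigl[\L_j : \L_{j-1}\bigr] = 2$, from which $\bigl[\L(E):\K\bigr] = 2^m$ follows by the tower law, and the basis statement follows because a tower of quadratic extensions obtained by adjoining $\sqrt{a_1}, \dots, \sqrt{a_m}$ has the products $\prod_{j \in F}\sqrt{a_j}$, $F \subseteq \{1,\dots,m\}$, as a vector-space basis. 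Note first that $D_{u,w}^2 = \sum_i (X_u^{(i)} - X_w^{(i)})^2 \in \K$, so each extension is at most quadratic; the whole content is showing it is exactly quadratic, i.e. $D_{u_j,w_j} \notin \L_{j-1}$.

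The heart of the argument is therefore: for a set $E$ of edges and one further edge $u w \notin E$, the function $D_{u,w}$ does not lie in $\L(E)$. Equivalently, $\delta := D_{u,w}^2 = \sum_i (X_u^{(i)} - X_w^{(i)})^2$ is not a square in $\L(E)$. I would prove this by a valuation/degree argument. Introduce a new indeterminate $t$ and substitute $X_u^{(1)} \mapsto X_u^{(1)} + t$ (shift only the first coordinate of the single vertex $u$), keeping all other variables fixed; this embeds everything into $\L(E)(t)$ or, better, into the field of Puiseux or Laurent series in $t$ with coefficients in the appropriate function field. Under this substitution $\delta$ becomes $t^2 + (\text{lower order in } t) $ with nonzero $t^1$ coefficient $2(X_u^{(1)} - X_w^{(1)})$ unless $uw$ is "degenerate"; more usefully, one tracks the $t$-adic valuation. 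Every distance $D_{x,y}$ with $x,y \ne u$ is unchanged, hence a unit with valuation $0$; every $D_{u,y}$ with $y \ne w$ behaves like $\sqrt{(\text{const}) + (\text{linear in }t)}$, which has a Puiseux expansion with only integer powers of $t$ (valuation $0$, since the constant term is a nonzero element of the base field at a generic point); and $D_{u,w}$ itself expands as $\sqrt{t^2 + \cdots}$. The point is that elements of $\L(E)$, expanded in $t$, have $t$-adic valuations in $\frac12\Z$ but with a controlled structure, and one shows $D_{u,w}$ — whose square $\delta$ has even valuation but whose expansion genuinely requires the half-integer grading associated to the new edge — cannot be matched. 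Cleaner still: specialize the $X$-variables to generic real numbers so that $\L(E)$ becomes the subfield of $\R$ (or of $\R(t)$ after the shift) generated over $\Q(t)$ by $\sqrt{a_F(t)}$ for the edges in $E$, each $a_F$ a polynomial in $t$ of degree $\le 2$ with nonzero constant term, while $a_{uw}(t) = t^2 + bt + c$; then a standard Kummer-theory / squarefree-part computation over $\Q(t)$ shows $\sqrt{a_{uw}}$ is independent of the $\sqrt{a_F}$'s because its squarefree part involves $t$ with multiplicity not captured by the others. I expect this independence step — making precise that adjoining the $m$-th square root is genuinely new, uniformly in the generic configuration — to be the main obstacle, and the $t$-shift-plus-valuation device is the mechanism I would use to isolate exactly one edge at a time.

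Once $[\L(E):\K] = 2^m$ is established, the basis statement is essentially automatic: the $2^m$ products $\prod_{uw \in F} D_{u,w}$ manifestly span $\L(E)$ over $\K$ by definition \eqref{eq: Definition of L}, and a spanning set of size equal to the dimension is a basis. I would also record the multiplicative structure explicitly — $\bigl(\prod_{uw \in F} D_{u,w}\bigr)\bigl(\prod_{uw \in F'} D_{u,w}\bigr) = \bigl(\prod_{uw \in F \cap F'} D_{u,w}^2\bigr)\prod_{uw \in F \triangle F'} D_{u,w}$, with $D_{u,w}^2 \in \K$ — since this is what makes $\L(E)$ closed under multiplication and hence a ring, and being a finite-dimensional integral domain over a field (it sits inside, say, the field of real-analytic germs, so has no zero divisors) it is automatically a field. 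That closes the lemma; the subsequent lemmas in the sketch (\cref{lem: Extension generic nullity} onward) will use precisely this natural basis to read off coefficients.
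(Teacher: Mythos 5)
Your overall architecture --- induct on $m$, show that each $D_{u_j,w_j}$ generates a genuine quadratic extension of the field built from the previous edges, conclude $[\L(E):\K]=2^m$ by the tower law, and get the basis statement for free since the $2^m$ products span $\L(E)$ by definition --- is the same as the paper's. The paper packages the induction as \cref{lem: Field extension multidimensional rigidity}: under the hypotheses that each $R_i^2\in\K$, that no $R_i^2$ is a square in $\K$, and that no nonempty product $\prod_{i\in I}R_i$ lies in $\K$, the extension has degree $2^m$; its induction step disposes of the cross terms by applying the induction hypothesis to the modified families $R_1,\dots,R_{k-1},R_{k+1}$ and $R_1,\dots,R_{k-1},R_kR_{k+1}$. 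Your observation that $\L(E)$ is automatically a field because it is a finite-dimensional integral domain over $\K$ is a legitimate shortcut for the paper's explicit inverse computation.

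The gap is in the one step you yourself flag as the main obstacle. The shift $X_u^{(1)}\mapsto X_u^{(1)}+t$ does not isolate the edge $uw$: after the shift \emph{every} distance incident to $u$ becomes $\sqrt{t^2+(\text{linear in }t)+(\text{const})}$, not just $D_{u,w}$, so your description of $D_{u,y}$ for $y\neq w$ as $\sqrt{(\text{const})+(\text{linear in }t)}$ is incorrect, and neither the $t$-adic valuation at $0$ (where all these square roots are units) nor the degree at infinity (where they all grow like $t$) distinguishes $D_{u,w}$ from the other edges at $u$. The variant that does produce a distinguished valuation --- collapsing $x_u$ onto $x_w$ so that $D_{u,w}^2=t^2$ --- backfires, since it makes $D_{u,w}=\pm t$ rational rather than exhibiting it as a new square root. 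What actually closes the argument, and what the paper's \ref{it: H3} silently relies on, is unique factorization in $\Q[X_1^{(1)},\dots,X_n^{(d)}]$: for $d\ge 2$ each $D_{u,w}^2=\sum_{i}(X_u^{(i)}-X_w^{(i)})^2$ is an irreducible polynomial, distinct edges give pairwise non-associate irreducibles, so no nonempty product $\prod_{e\in I}D_{e}^2$ is a square in $\K$, hence no $\prod_{e\in I}D_{e}$ lies in $\K$; equivalently the classes of the $D_e^2$ are $\mathbb{F}_2$-independent in $\K^\times/(\K^\times)^2$, and Kummer theory (or the paper's hand-rolled induction) then gives degree $2^m$. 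Replacing your valuation device by this square-class computation completes your proof, and would also make explicit a verification that the paper itself only asserts.
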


\cref{lem: Field extension} involves several elements from field theory. They will not be discussed as they come from a branch of mathematics, namely algebra, different than and far from the rest of the concepts used in this paper. Nonetheless, these concepts can be found \eg{} in \cite{roman_2005_field} and the proof of \cref{lem: Field extension} is provided in \cref{sapp: Proof of lemma field extension} for the sake of completeness.
Only the implications of this lemma are explained here. The first important point is that $\L$ is a field. Therefore, every nonzero element has a multiplicative inverse. Second, $\L$ is a $\K$-vector space of dimension $2^m$ and one natural basis is known. For example, if $E = \set{ab, bc, ac}$, the natural basis of $\L$ has eight elements: the constant function equals to $1$, the three distance functions $D_{a,b}, D_{b,c}, D_{a,c}$, the three products of two distance functions $D_{a,b}D_{b,c}, D_{b,c} D_{a,c}, D_{a,c}D_{a,b}$ and the product of the three distance functions $D_{a,b}D_{b,c}D_{a,c}$.
Finally, \cref{lem: Field extension} also implies that the polynomials $P_F$ involved in \eqref{eq: Definition of L} are unique as they are the coordinates on the natural basis.

Remind that a vector $x$ is said to be \emph{generic} if its coordinates are not root of any non-null polynomial with integer coefficients. By definition, the $dn$ coordinates of a generic configuration form a generic vector of $\R^{dn}$. The structure of $\L$ allows to extend this property to $\L$ as explained in the following lemma.
\begin{lemma}\label{lem: Extension generic nullity}
	Let $d \ge 2$, $E \subset \{uw \mid 1 \le u < w \le n\}$ be a set of edges and $f \in \L(E)$.
	If there exists $x \in \R^{dn}$ is a generic vector such that $f(x) = 0$, then $f = 0$.
\end{lemma}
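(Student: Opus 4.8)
The plan is to reduce the statement to a genuine algebraic-independence fact about the distance functions $D_{u,w}$ over $\R$, and then exploit the natural basis from \cref{lem: Field extension}. Write $f = \sum_{F \in \rP(E)} P_F \prod_{uw \in F} D_{u,w}$ with each $P_F \in \K = \Q(X_1^{(1)},\dots,X_n^{(d)})$, uniquely by \cref{lem: Field extension}. Suppose $f(x) = 0$ for some generic $x \in \R^{dn}$ but $f \ne 0$, so at least one $P_F$ is nonzero. First I would clear denominators: multiplying $f$ by a suitable common denominator $Q \in \Q[X_1^{(1)},\dots,X_n^{(d)}]$ that does not vanish at $x$ (which exists because $x$ is generic, hence not a root of the nonzero polynomial $Q$), we may assume each $P_F$ is a polynomial, not all zero, and $\big(\sum_F P_F \prod_{uw\in F} D_{u,w}\big)(x) = 0$.

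The key step is to show that this forces all $P_F(x) = 0$, which contradicts genericity of $x$ (each nonzero $P_F$ is a nonzero integer-coefficient polynomial, so $P_F(x) \ne 0$). For this I would argue that the real numbers $\{\prod_{uw \in F} D_{u,w}(x) \mid F \in \rP(E)\}$ are linearly independent over $\Q$ — in fact over $\Q(x_1^{(1)},\dots,x_n^{(d)})$ — when $x$ is generic. Equivalently, the numbers $\{D_{u,w}(x)^2 \mid uw \in E\}$, i.e. the squared distances $\sum_i (x_u^{(i)} - x_w^{(i)})^2$, which are themselves generic reals, have the property that the square roots $D_{u,w}(x) = \sqrt{D_{u,w}(x)^2}$ generate a field extension of $\Q(x)$ of degree exactly $2^{|E|}$, with the products over subsets forming a basis. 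This is the ``concrete'' analogue of \cref{lem: Field extension}: the same multiquadratic-extension argument works, except one must check that the squared distances $d_{uw} := D_{u,w}(x)^2$ are ``multiplicatively independent modulo squares'' in $\Q(x)^\times$, meaning no nontrivial product $\prod_{uw \in S} d_{uw}$ (for $\emptyset \ne S \subseteq E$) is a square in $\Q(x)$. I expect this to be the main obstacle. The cleanest route is: the statement ``$\prod_{uw\in S} d_{uw}$ is a square'' is equivalent to saying a certain polynomial in the $x_u^{(i)}$ becomes a perfect square upon substituting the specific generic values; but a generic point lies off the (proper, since $d \ge 2$ makes each $D_{u,w}^2$ a non-square polynomial and distinct edges give coprime irreducible squarefree parts) algebraic hypersurface $\{\prod_{uw\in S}(\sum_i(X_u^{(i)} - X_w^{(i)})^2) = (\text{polynomial})^2\}$, because that hypersurface is cut out by nonzero integer-coefficient polynomials (the resultants/discriminants witnessing non-squareness). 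So no such $S$ exists, hence the multiquadratic argument of \cref{lem: Field extension} applies verbatim to give $\Q(x)$-linear independence of the $2^{|E|}$ products.

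Given that linear independence, the vanishing $\sum_F P_F(x) \prod_{uw\in F} D_{u,w}(x) = 0$ with coefficients $P_F(x) \in \Q(x) \subseteq \R$ forces every $P_F(x) = 0$. Since the $P_F$ are integer-coefficient polynomials not all identically zero, pick $F_0$ with $P_{F_0} \ne 0$; then $P_{F_0}$ is a nonzero polynomial with integer coefficients having $x$ as a root, contradicting that $x$ is generic. Therefore $f = 0$, as claimed. A remark I would add: the hypothesis $d \ge 2$ is used precisely to ensure each $D_{u,w}^2$ is a non-square as a polynomial (for $d = 1$, $D_{u,w}^2 = (X_u - X_w)^2$ is already a square, the extension collapses, and indeed the unidimensional theory behaves entirely differently, as \cref{sec: Rigidity of unidimensional frameworks} shows) — and that the non-squareness of products over distinct edges follows because the irreducible quadratic forms $\sum_i (X_u^{(i)} - X_w^{(i)})^2$ for distinct pairs $uw$ are pairwise non-associate, so their product is squarefree.
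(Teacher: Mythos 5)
Your overall strategy is sound but genuinely different from the paper's. The paper argues by induction on $\card{E}$: writing $f = f_1 + D_{u,w}f_2$ with $f_1,f_2 \in \L(E\setminus\{uw\})$, it forms the conjugate $\bar f = f_1 - D_{u,w}f_2$ and observes that $g = f\bar f = f_1^2 - D_{u,w}^2 f_2^2$ lies in $\L(E\setminus\{uw\})$ and still vanishes at $x$; the induction hypothesis gives $g=0$, and the field property of $\L(E)$ from \cref{lem: Field extension} forces $f=0$ or $\bar f=0$, either of which annihilates all coordinates of $f$ in the natural basis. That route never needs to know that the evaluated products $\prod_{uw\in F}D_{u,w}(x)$ are linearly independent over $\Q(x)$. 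You instead prove exactly that pointwise independence --- a ``concrete'' version of \cref{lem: Field extension} inside $\R$ over the subfield $\Q(x)$ --- and then read off $P_F(x)=0$ for every $F$ before invoking genericity. This is more work than the paper's induction, but it is more conceptual: it isolates why genericity and $d\ge 2$ matter.

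There is, however, one step that fails as written: your justification that no nonempty product $\prod_{uw\in S}d_{uw}$ is a square in $\Q(x)$. Being a square \emph{in the field $\Q(x)$} is not the complement of an algebraic hypersurface in the coordinates of $x$: there is no finite family of integer-coefficient polynomials (resultants, discriminants, or otherwise) whose non-vanishing at $x$ characterizes the non-existence of \emph{some} element of $\Q(x)$ squaring to the given value, and for non-generic rational $x$ the question degenerates into an arithmetic one about a rational number. The repair is short and uses genericity directly: since the coordinates of $x$ are algebraically independent over $\Q$, evaluation at $x$ is a field isomorphism $\K\to\Q(x)$; squares correspond to squares under it, and $\prod_{uw\in S}D_{u,w}^2$ is not a square in $\K$ because, as you correctly observe, for $d\ge2$ it is a nonconstant squarefree polynomial (a product of pairwise non-associate irreducible quadratic forms). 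The same isomorphism transports conditions \textbf{H.2} and \textbf{H.3} of \cref{lem: Field extension multidimensional rigidity} to the real numbers $D_{u,w}(x)$, after which the multiquadratic argument applies verbatim and the remainder of your proof goes through.
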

\begin{proof}
	Let $x$ be a generic vector and $m = \card{E}$. For every $E' \subset E$, by \cref{lem: Field extension}, $\L(E')$ is a field and a $\K$-vector space of dimension $2^{\card{E'}}$ whose natural basis is composed of the products between the distance functions.
	
	Let us prove the lemma by induction on the number of distance functions appearing in the expression of $f$. Let us prove that:
	$\forall k \in \set{0, \dots, m}$, if $f \in \L(E')$ with $\card{E'} = k$ and if $f(x) = 0$, then $f = 0$.
	
	\emph{Base case:} If $k = 0$, $f$ is a rational function with integer coefficients, \ie{} $f = P/Q$ with $P$ and $Q$ two polynomials with integer coefficients. By definition, since $x$ is generic and $P(x) = 0$, $P$ is the null function and therefore $f = 0$.
	
	\emph{Inductive step:} Let $E'$ have a cardinality of $k+1$ with $k \ge 0$, $f \in \L(E')$ with $f(x) = 0$ and $uw \in E'$. Any function $h \in \L(E')$ can be uniquely decomposed, by separating the natural basis of $\L(E')$, as $h_1 + D_{u,w} h_2$ with $h_1, h_2 \in \L(E'\setminus{\set{uw}})$.
	Let $f = f_1 + D_{u,w} f_2$ be this decomposition applied to $f$. Furthermore, let $\bar f = f_1 - D_{u,w} f_2$ and $g = f\bar f = f_1^2 - D_{u,w}^2 f_2^2$. As $D_{u,w}^2$ is a polynomial, $g \in \L(E'\setminus{\set{uw}})$ whose cardinal is $k$. Since $f$ vanishes at $x$, $g$ also vanishes at $x$ and by the induction hypothesis, $g = 0$.
	Therefore, since $\L(E')$ is a field, either $f = 0$ or $\bar f = 0$. By definition, $f$ and $\bar f$ have the same coordinates up to a sign in the natural basis, thus $f = 0$.
\end{proof}

\cref{lem: Extension generic nullity} extends the definition of generic point. By definition, if a generic point annihilates a function $f \in \K$, then $f$ is the null function. This result remains true if $f$ belongs to $\L$. Since $\L$ is the field, the minor function of the rigidity matrix belong to $\L$. \cref{lem: Extension generic nullity} implies the two following main lemmas.
\begin{lemma}\label{lem: Generic configuration maximizes the rank}
	Let $d\ge2$ and $(\Gamma, p)$ be a $d$-dimensional conic framework. If $p$ is generic then:
	\begin{equation}
		\rank M(\Gamma,p) = \max_{p'} \rank M(\Gamma,p')
	\end{equation}
	where the maximum is taken over all the configurations $p'$ of $\Gamma$ in $\R^{d+1}$.
\end{lemma}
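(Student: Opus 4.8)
The plan is to argue via the standard rank-as-maximal-nonvanishing-minor characterization, exactly as is done for Euclidean frameworks, but now working inside the field $\L$ rather than the polynomial ring $\K$. Let $r = \max_{p'} \rank M(\Gamma, p')$, where the maximum is over configurations $p'$ in $\R^{d+1}$; since the rank of $M(\Gamma, p')$ is unaffected by the bias coordinates (the last $n$ columns are $D(\Gamma, p')B(\Gamma)$, and $D$ depends only on the spatial coordinates), this maximum is really a maximum over configurations of the spatial coordinates in $\R^{dn}$. Pick a configuration $p^\star$ attaining the maximum, so there is an $r\times r$ submatrix of $M(\Gamma, p^\star)$ whose determinant is nonzero.

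First I would observe that every $r\times r$ minor of $M(\Gamma, p)$, viewed as a function of the $dn$ spatial coordinates, is an element of $\L = \L(E_S\cup E_D)$: indeed the entries of $M(\Gamma,p)$ are either linear functions of the coordinates (hence in $\K$) or of the form $\pm D_{u,w}$, and $\L$ is a ring (in fact a field, by \cref{lem: Field extension}) containing all of these, so the Leibniz expansion of any minor lands in $\L$. Fix the particular $r\times r$ submatrix that is nonsingular at $p^\star$ and call its determinant $f \in \L$. Since $f(p^\star) \neq 0$, $f$ is not the zero element of $\L$. Now apply the contrapositive of \cref{lem: Extension generic nullity}: a nonzero element of $\L$ cannot vanish at a generic vector of $\R^{dn}$. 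Hence $f(p) \neq 0$, so the corresponding $r\times r$ submatrix of $M(\Gamma, p)$ is nonsingular, giving $\rank M(\Gamma, p) \ge r$. The reverse inequality $\rank M(\Gamma, p) \le r$ is immediate from the definition of $r$ as the maximum over all configurations (including $p$ itself). Therefore $\rank M(\Gamma, p) = r = \max_{p'} \rank M(\Gamma, p')$.

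The only point requiring a little care is the claim that the generic spatial coordinates of $p$ form a generic vector of $\R^{dn}$ in the sense demanded by \cref{lem: Extension generic nullity}; but this is exactly the definition of a generic configuration given in the excerpt, so there is nothing to prove there. Likewise, the reduction of the maximum over $\R^{d+1}$-configurations to a maximum over $\R^{dn}$-configurations is purely formal, since $D(\Gamma, p')$ — and hence the whole matrix $M(\Gamma, p') = \begin{bmatrix} M_e(\Gamma, p') & D(\Gamma, p')B(\Gamma)\end{bmatrix}$ — depends only on the spatial part of $p'$. I do not anticipate a genuine obstacle here: the substance has already been packaged into \cref{lem: Field extension} and \cref{lem: Extension generic nullity}, and this lemma is the routine ``generic points maximize rank'' consequence, with the sole twist that the ambient ring of minor-functions is $\L$ instead of $\K$. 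If anything, the mild subtlety worth spelling out is simply that one must invoke that $\L$ is closed under the ring operations used in a determinant expansion (addition and multiplication), which is contained in the assertion of \cref{lem: Field extension} that $\L$ is a field.
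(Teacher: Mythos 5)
Your proposal is correct and follows essentially the same route as the paper: both reduce to the fact that the minor functions of $M(\Gamma,\cdot)$ lie in $\L$ and then invoke \cref{lem: Extension generic nullity}. The only cosmetic difference is that you apply that lemma in contrapositive form to an $r\times r$ minor that is nonzero at a maximizing configuration, whereas the paper applies it directly to the $(r+1)\times(r+1)$ minors, which vanish at the generic $p$ and hence vanish identically.
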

\begin{proof}
	Let $(\Gamma,p)$ be a generic conic framework, $p'$ be any other configuration and $r = \rank M(\Gamma, p)$. Let us prove that $\rank M(\Gamma, p') \le r$. To do so, let us prove that any minor of order $(r+1)$ of $M(\Gamma, p')$ is null. Let $N$ be a square sub-matrix of order $(r+1)$ of $M(\Gamma, p)$. Consider $f_N$ the function from $\R^{dn}$ to $\R$ associating to the coordinates of the points the minor corresponding to $N$. As $r = \rank M(\Gamma, p)$, by definition of the rank, $f_N(p) = 0$. As $p$ is generic and $f_N \in \L$, by \cref{lem: Extension generic nullity}, $f_N = 0$. In particular, $f_N(p') = 0$, and thus $\rank M(\Gamma, p') \le r$.
\end{proof}

The rank of the conic rigidity matrix is therefore a generic property of the graph. The following lemma extends this property to the conic graph.  

\begin{lemma}\label{lem: Generic configuration equivalent rank}
	Let $d\ge2$, $(\Gamma, p)$ be a $d$-dimensional conic framework and $\Gamma'$ be a graph equivalent to $\Gamma$. If $p$ is generic then:	
	\begin{equation}
		\rank M(\Gamma,p) = \rank M(\Gamma',p)
	\end{equation}
\end{lemma}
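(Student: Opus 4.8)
The plan is to reduce the equivalence of ranks to the case where $\Gamma$ and $\Gamma'$ differ by reversing a single arc, and then to exhibit an explicit elementary row operation on the rigidity matrix that transforms one into (essentially) the other. Since equivalence of directed graphs is generated by single-arc reversals — reversing an arc $uw$ to $wu$ when the reverse arc is not already present — it suffices by induction on the number of differing arcs to prove the lemma under the assumption that $\Gamma'$ is obtained from $\Gamma$ by reversing exactly one arc $uw \in E$ into the arc $wu$.

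First I would recall the structure of the rigidity matrix from the decomposition $M(\Gamma, p) = \begin{bmatrix} M_e(\Gamma, p) & B(\Gamma, p)\end{bmatrix}$ with $B(\Gamma, p) = D(\Gamma, p) B(\Gamma)$. The Euclidean block $M_e$ is insensitive to arc orientation: the row of $M_e$ for an arc $uw$ has $x_u^\intercal - x_w^\intercal$ in the $u$-columns and $x_w^\intercal - x_u^\intercal$ in the $w$-columns, which is symmetric under swapping $u$ and $w$ only up to sign — so the full row of $M(\Gamma, p)$ for the arc $uw$ is exactly the negative of the row one would get for the arc $wu$ (the bias part $-d_{u,w}, +d_{u,w}$ also negates). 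Hence reversing the arc $uw$ to $wu$ has the effect of multiplying the corresponding row of the rigidity matrix by $-1$. Multiplying a single row of a matrix by a nonzero scalar does not change its rank, so $\rank M(\Gamma, p) = \rank M(\Gamma', p)$ in this single-reversal case, and the general case follows by the induction above. Note that genericity is not even needed for this argument — the rank equality holds for every configuration — but stating it for generic $p$ suffices and matches how the lemma is used.

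The only point requiring a little care is the precise sign bookkeeping: I would verify explicitly, referring to \eqref{eq: Arc infinitesimal constraint} (equivalently \eqref{eq: Example rigidity matrix}), that the row of $M(\Gamma,p)$ indexed by an arc $uw$ is $\begin{pmatrix} \dots & (x_u - x_w)^\intercal & \dots & (x_w - x_u)^\intercal & \dots & -d_{u,w} & \dots & d_{u,w} & \dots \end{pmatrix}$ (with the displayed blocks in the $x_u$, $x_w$, $\alpha_u$, $\alpha_w$ positions and zeros elsewhere), and that the row indexed by $wu$ is obtained from this by interchanging the roles of $u$ and $w$, which yields exactly the negation of the first row. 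I expect no real obstacle here; the main thing is to state the reduction to single-arc reversal cleanly and to observe that row scaling by $-1$ is a rank-preserving operation. This lemma, together with \cref{lem: Generic configuration maximizes the rank}, then gives that the generic rank depends only on the conic graph $\tilde\Gamma$, which is what is needed to finish the proof of \cref{the: Multidimensional case rigidity}.
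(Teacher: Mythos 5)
There is a genuine gap, and it is exactly at the point you flagged as ``only requiring a little care'': the sign bookkeeping is wrong. Reversing the arc $uw$ into $wu$ does \emph{not} negate the corresponding row of $M(\Gamma,p)$. The Euclidean block of that row is \emph{unchanged} under reversal, because the constraint $(x_u-x_w)^\intercal(v_u-v_w)$ in \eqref{eq: Arc infinitesimal constraint} is symmetric in $u$ and $w$ (both factors flip sign, so their product does not), while only the bias block $(-d_{u,w},\,+d_{u,w})$ is negated. You can see this directly in \eqref{eq: Example rigidity matrix}: the rows for the arcs $1\to 2$ and $2\to 1$ have identical Euclidean blocks and opposite bias blocks, and they are linearly independent --- indeed that independence is the whole reason a symmetric pair of arcs pins down both the distance and the bias difference. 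So the row for $wu$ is not a scalar multiple of the row for $uw$, no elementary row operation realizes the reversal, and the rank equality is not a pointwise linear-algebra fact. Your remark that ``genericity is not even needed'' is the tell: the statement is false without genericity, and it is false in dimension $1$ even generically --- by \cref{the: Unidimensional case rigidity}, reversing an arc moves it between $G_+$ and $G_-$ and can destroy or create infinitesimal rigidity, which is precisely why the lemma is restricted to $d\ge 2$. Any argument that never uses $d\ge 2$ or genericity cannot be correct.

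The paper's proof instead works at the level of minor functions in the field $\L$. Flipping the arc $uw$ negates exactly the entries of the rigidity matrix that carry a factor $D_{u,w}$, so when a minor $f_N$ of $M(\Gamma,p)$ and the corresponding minor $f'_N$ of $M(\Gamma',p)$ are expanded in the natural basis of $\L$, their coordinates agree up to sign basis element by basis element; hence $f_N=0$ as a function if and only if $f'_N=0$. Combining this with \cref{lem: Extension generic nullity} at the generic configuration $p$ (a vanishing minor at a generic point is the zero function) gives $\rank M(\Gamma',p)\le\rank M(\Gamma,p)$, and symmetry finishes the proof. Your reduction to single-arc reversals is fine and is implicitly what the paper does, but the step from one reversal to equality of ranks must go through this algebraic identification of minors, not through a row operation.
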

\begin{proof}
	Let $\Gamma$ and $\Gamma'$ be two equivalent graphs and $p$ be a generic configuration. Let us prove that $\rank M(\Gamma', p) \le \rank M(\Gamma, p)$. Then by symmetry, the equality will hold. Similarly to the proof of \cref{lem: Generic configuration maximizes the rank}, denote $r = \rank M(\Gamma, p)$, consider $N$ a square sub-matrix of order $(r+1)$ of $M(\Gamma, p)$ and denote $f_N$ the function associated to the corresponding minor. As $p$ is generic and $f_N \in \L$, by \cref{lem: Extension generic nullity}, $f_N = 0$. Furthermore, let $f'_N$ be the minor function of $M(\Gamma', p)$ associated to the same columns and to the rows associated with the equivalent arcs in $\Gamma'$. Flipping an arc $uw$ is equivalent to replace the entries of the rigidity matrix in $D_{u,w}$ by their opposites. Consequently, in the natural basis of $\L$, $f_N$ and $f'_N$ have the same coordinates up to the sign. Therefore, $f'_N = 0$ too and in particular, $f'_N(p) = 0$. Thus $\rank M(\Gamma', p) \le \rank M(\Gamma, p)$.
\end{proof}

Combining this two lemmas gives the proof of \cref{the: Multidimensional case rigidity}.
\begin{proof}[Proof of \cref{the: Multidimensional case rigidity}]
	Let $(\Gamma, p)$ and $(\Gamma', p')$ be two generic conic frameworks with $\Gamma$ and $\Gamma'$ equivalent. Since both $p$ and $p'$ are generic, by \cref{lem: Generic configuration maximizes the rank}:
	\begin{equation}
		\rank M(\Gamma, p) = \max_{p''} \rank M(\Gamma, p'') = \rank M(\Gamma, p')
	\end{equation}
	Then, since $\Gamma$ and $\Gamma'$ are equivalent, by \cref{lem: Generic configuration equivalent rank}: $\rank M(\Gamma, p') = \rank M(\Gamma', p')$.
	Therefore, $\rank M(\Gamma, p) = \rank M(\Gamma', p')$, thus $(\Gamma, p)$ and $(\Gamma', p')$ are either both infinitesimally rigid or none of them are.
\end{proof}

\subsection{Proof of \cref{the: Decomposition of conic rigid graph}}
Since \cref{the: Decomposition of conic rigid graph} is an equivalence, the two following implications need to be proved.
\begin{enumerate}[label=\textbf{I.\arabic*}, ref=Implication I.\arabic*]
	\item\label{it: Converse sense} If $G$ is a Euclidean rigid graph in $\R^d$ and $H$ is a Euclidean connected graph then, their union $\tilde \Gamma = G \cup H$ is rigid in $\R^d$.
	\item\label{it: Direct sense} If $\tilde \Gamma$ is rigid in $\R^d$ then, there exists $(G,H)$ a decomposition of $\tilde \Gamma$ such that $G$ is rigid in $\R^d$ and $H$ is connected.
\end{enumerate}

\ref{it: Converse sense} is proved first. It involves the following additional definitions on Euclidean and conic graphs. 

\begin{definition}
	Let $d \ge 2$.
	
	A Euclidean graph $G$ is \emph{independent} in $\R^d$ if for any $d$-dimensional generic framework $(G, p)$, the rows of its Euclidean rigidity matrix are independent.
	
	A conic graph $\tilde \Gamma$ is \emph{independent} in $\R^d$ if for any $d$-dimensional generic framework $(\Gamma, p)$ whose underlying conic graph is $\tilde \Gamma$, the rows of its conic rigidity matrix are independent.
\end{definition}
The definitions are correct since the rank of the Euclidean rigidity matrix does not depend on the generic configuration \cite{asimow_rigidity_1979} and similarly, the rank of the conic rigidity matrix does depend neither on the generic configuration nor on the graph but only on the equivalence class of the graph (\cref{the: Multidimensional case rigidity}).

\begin{definition}
	Let $d \ge 2$.
	
	A Euclidean graph $G$ is  \emph{minimally rigid} in $\R^d$ if it is independent in $\R^d$ and has $S_e(n,d)$ edges.	
	
	A conic graph $\tilde \Gamma$ is \emph{minimally rigid} in $\R^d$ if it is independent in $\R^d$ and has $S(n,d)$ edges.
\end{definition}
Note that the number of edges of a conic graph $\tilde \Gamma =(V, E_S, E_D)$ is $\card{E_S} + 2 \card{E_D}$. As the name suggests, minimally rigid graphs are rigid. Indeed, the generic rank of the rigidity matrix of an independent graph is equal to its numbers of edges. Minimally rigid graphs achieve therefore the maximum rank. Furthermore, as the name also suggests, minimally rigid graphs are the rigid graphs with the minimal number of edges.

\ref{it: Converse sense} is a consequence of \cref{lem: Independence union minimally rigid with forest}. To prove this lemma, the following result from algebra is required, its proof is provided in \cref{sapp: Proof of lemma Inversibility algebric}.
\begin{lemma}\label{lem: Inversibility algebric}
	Let $d \ge 2$, $E_H$ and $E_G$ be two sets of edges, and $A$ be a square matrix of order $\card{E_H}$ whose entries are functions from $\R^{dn}$ to $\R$.
	
	Suppose $E_H \cap E_G = \emptyset$ and the entries of $A$ belong to $\L(E_G)$. Then, for any generic configuration $p$, the matrix $B = D(H,p) + A(p)$ is invertible.	
\end{lemma}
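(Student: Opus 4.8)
The plan is to prove \cref{lem: Inversibility algebric} by showing $\det B \ne 0$ as an element of the field $\L(E_H \cup E_G)$ and then invoking \cref{lem: Extension generic nullity}. First I would set $m = \card{E_H}$ and observe that $D(H,p)$ is the diagonal matrix $\diag(\dots, D_{u,w}, \dots)$ indexed by the edges $uw \in E_H$, while the entries of $A$ lie in $\L(E_G)$. So every entry of $B = D(H,p) + A(p)$ lies in $\L(E_H \cup E_G)$, and $\det B$ is a well-defined element of this field. Since $E_H \cap E_G = \emptyset$, \cref{lem: Field extension} tells us $\L(E_H \cup E_G)$ has $\{\prod_{uw \in F} D_{u,w} \mid F \in \rP(E_H \cup E_G)\}$ as a natural $\K$-basis, and in particular the $2^m$ products $\prod_{uw \in F} D_{u,w}$ for $F \subseteq E_H$ are $\L(E_G)$-linearly independent (they are part of a basis of $\L(E_H\cup E_G)$ viewed as an $\L(E_G)$-vector space, using that $\L(E_H \cup E_G) = \L(E_G)(E_H)$ by the disjointness hypothesis).

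The key algebraic step is to extract the coefficient of the top-degree term $\prod_{uw \in E_H} D_{u,w}$ in the Leibniz expansion $\det B = \sum_{\sigma} \mathrm{sgn}(\sigma) \prod_{i} B_{i,\sigma(i)}$. I would argue that the only way to produce this product of \emph{all} the distance functions $D_{u,w}$, $uw \in E_H$, is to pick the diagonal entry $D_{u_i,w_i} + A_{i,i}$ from every row $i$ and to take the $D_{u_i,w_i}$ summand each time: any off-diagonal factor $B_{i,j} = A_{i,j}$ with $i \ne j$ lies in $\L(E_G)$ and contributes no $D_{u,w}$ with $uw \in E_H$, and the $A_{i,i}$ summand likewise contributes none. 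Hence the coefficient of $\prod_{uw \in E_H} D_{u,w}$ in $\det B$, when $\det B$ is written in the natural basis over $\L(E_G)$, is exactly $1$ (coming from the identity permutation). In particular $\det B \ne 0$ in $\L(E_H \cup E_G)$. Then $p \mapsto \det B(p)$ is a function in $\L(E_H \cup E_G)$ that is not the zero function, so by \cref{lem: Extension generic nullity} it cannot vanish at the generic configuration $p$; therefore $B = D(H,p) + A(p)$ is invertible.

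The main obstacle I anticipate is making the ``coefficient extraction'' fully rigorous: one must be careful that $\det B$, a priori an element of $\L(E_H \cup E_G)$ expressed as a $\K$-combination of all $2^{\card{E_H \cup E_G}}$ products, really does have a clean expansion in the \emph{sub}-basis indexed by $F \subseteq E_H$ with coefficients in $\L(E_G)$ — this uses the tensor-product / tower structure $\L(E_H\cup E_G) = \L(E_G) \otimes_\K \L(E_H)$ guaranteed by \cref{lem: Field extension} together with $E_H \cap E_G = \emptyset$, so that $\{\prod_{uw \in F} D_{u,w}: F \subseteq E_H\}$ is an $\L(E_G)$-basis of $\L(E_H \cup E_G)$. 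Once that structural fact is in hand, the Leibniz-expansion argument isolating the identity permutation is routine, and the appeal to \cref{lem: Extension generic nullity} closes the proof. A minor point to double-check is that $D_{u,w}^2$ being polynomial never lets a product $A_{i,j}\cdots$ secretly generate a $D_{u,w}$ factor with $uw \in E_H$: since the $A$ entries live in $\L(E_G)$ and $E_G$ is disjoint from $E_H$, no such $D_{u,w}$ appears, so this cannot happen.
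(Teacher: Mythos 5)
Your proposal is correct and follows essentially the same route as the paper's proof: both express $\det B$ in the natural basis of $\L(E_G \cup E_H)$, show that the coefficient of $\prod_{uw \in E_H} D_{u,w}$ equals $1$ (the paper by an induction on $\card{E_H}$, you by isolating the identity permutation in the Leibniz expansion, which is the same computation made explicit), and then conclude via \cref{lem: Extension generic nullity} that $\det B(p) \ne 0$ at a generic configuration.
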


\begin{lemma}\label{lem: Independence union minimally rigid with forest}
	Let $d \ge 2$, $G = (V, E_G)$ and $H = (V, E_H)$ be two Euclidean graphs.
	
	If $G$ is independent in $\R^d$ and if $H$ is a spanning forest then, their union $\tilde \Gamma = G \cup H$ is independent in $\R^d$.
\end{lemma}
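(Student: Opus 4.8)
The plan is to work directly with the conic rigidity matrix $M(\tilde\Gamma, p)$ and exhibit an explicit invertible square submatrix of the right size. Fix a generic configuration $p$. Order the rows of $M(\tilde\Gamma,p)$ so that the rows coming from $E_G$ (the edges of $G$, here treated as a conic graph with an arbitrary orientation) come first, and the rows coming from $E_H$ (the spanning forest) come last; this is legitimate because the conic rigidity matrix of $\tilde\Gamma=G\cup H$, up to row permutation and the choice of orientations (which by \cref{lem: Generic configuration equivalent rank} does not affect the rank), is obtained by stacking the conic rigidity matrix of $G$ on top of the conic rigidity matrix of $H$. Since $G$ is independent in $\R^d$, the block of rows indexed by $E_G$ has full row rank $\card{E_G}$; choose $\card{E_G}$ columns on which this block is nonsingular. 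It remains to select $\card{E_H}$ further columns so that, together with the ones already chosen, the full $(\card{E_G}+\card{E_H})\times(\card{E_G}+\card{E_H})$ submatrix is invertible.

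The key observation is where to take those $\card{E_H}$ extra columns: from the bias block $B(\Gamma,p)=D(\Gamma,p)B(\Gamma)$. For a spanning forest $H$ one can always pick, for each edge $e=uw\in E_H$, a distinct endpoint, in such a way that the map ``edge $\mapsto$ chosen endpoint'' is a bijection onto a set $S$ of $\card{E_H}$ vertices (this is the standard fact that a forest has an orientation in which every edge points to a vertex of in-degree $\le 1$, equivalently each tree rooted anywhere gives each non-root vertex a unique parent edge). Restricting the bias columns of $M(\tilde\Gamma,p)$ to $S$, the rows indexed by $E_H$ form a matrix whose $(e,s)$ entry is $\pm\norm{x_u-x_w}$ if $s$ is the endpoint of $e$ chosen and $0$ or $\pm\norm{x_u-x_w}$ otherwise; after reindexing it is of the form $D(H,p)+A$ where $D(H,p)$ is the diagonal matrix of the edge lengths of $H$ and $A$ is a matrix whose entries are $\pm$ distances $D_{u,w}$ with $uw\notin E_H$ — hence $A$ has entries in $\L(E_G)$ (more precisely in $\L(E_S\cup E_D\setminus E_H)$, which is contained in $\L(E)$ for the edge set $E$ disjoint from $E_H$). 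Then \cref{lem: Inversibility algebric} applies verbatim and gives that this lower-right block $B:=D(H,p)+A$ is invertible at the generic $p$.

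With those column choices, the candidate square submatrix $N$ of $M(\tilde\Gamma,p)$ has block form
\begin{equation}
	N=\begin{bmatrix} N_G & * \\ C & B \end{bmatrix},
\end{equation}
where $N_G$ is the nonsingular $\card{E_G}\times\card{E_G}$ block coming from $G$, $B$ is the invertible block just produced, $C$ is the (spatial $+$ bias) part of the $E_H$-rows on the $E_G$-columns, and $*$ is the bias part of the $E_G$-rows on the $S$-columns. The plan is \emph{not} to appeal to a Schur-complement non-singularity directly, because $N_G$ and $B$ live in different blocks and the off-diagonal terms $C,*$ are not obviously negligible. Instead I would argue at the level of minor functions: the determinant $\det N$ is an element of the field $\L(E)$ for a suitable $E$, so by \cref{lem: Extension generic nullity} it suffices to show $\det N$ is not the zero function, and for that it suffices to find \emph{one} (possibly non-generic) configuration, or one specialization within $\L$, at which it is nonzero. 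A clean way is to exploit the $\K$-linear structure of $\L$: expand $\det N$ along the natural basis $\{\prod_{uw\in F}D_{u,w}\}$ and isolate the coordinate of the basis element $\prod_{uw\in E_H}D_{u,w}$ (i.e. ``pick up exactly the diagonal of $D(H,p)$ from the $E_H$-rows''). That coordinate equals $\big(\prod_{uw\in E_H}\pm\|x_u-x_w\|\text{-free rational part}\big)\cdot\det N_G$ times a nonzero rational factor — more precisely it is the determinant of $N_G$ with the $E_H$-rows contributing only their diagonal bias entries — which is a nonzero element of $\K$ because $\det N_G\neq 0$ as a function and hence as an element of $\K$ (note $N_G$'s entries, coming from an independent Euclidean graph together with a correct column choice, lie in $\K$ up to the distance factors that here do not enter the chosen coordinate). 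Since this coordinate in the natural basis is nonzero, $\det N\neq 0$ in $\L$, so $\det N$ is a nonzero minor function; by \cref{lem: Extension generic nullity} it does not vanish at the generic $p$, whence the $\card{E}=\card{E_G}+\card{E_H}$ rows of $M(\tilde\Gamma,p)$ are independent, i.e. $\tilde\Gamma$ is independent in $\R^d$.

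The main obstacle is the middle step: showing that the coupling blocks $C$ and $*$ do not conspire to kill the determinant. The honest way to handle this is the natural-basis/coordinate argument sketched above — identifying inside $\det N\in\L$ the single ``no extra distances'' coordinate and checking it equals (a nonzero scalar times) $\det N_G\cdot\det D(H,p)$-type quantity — rather than any metric estimate; once that coordinate is pinned down, everything reduces to the already-established facts that $G$ is independent (so $\det N_G\not\equiv0$) and that \cref{lem: Inversibility algebric} and \cref{lem: Extension generic nullity} let us pass between ``nonzero in $\L$'' and ``nonzero at a generic point''. A secondary, purely combinatorial point that must be gotten right is the existence of the endpoint-selection bijection for the spanning forest $H$ and the verification that with that selection the $E_H$-block of bias columns is genuinely $D(H,p)+A$ with $A$ having entries in $\L(E_G)$ (equivalently: none of the off-diagonal distances appearing are among the edges of $H$), which is exactly the hypothesis needed to invoke \cref{lem: Inversibility algebric}.
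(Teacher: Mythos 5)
Your route is genuinely different from the paper's (which takes $\omega$ in the left kernel, splits it into $\omega_G,\omega_H$, and shows a Schur complement is invertible after explicitly isolating the double edges), and the core idea---exhibit one $\left(\card{E_G}+\card{E_H}\right)$-order minor and show its coordinate on the basis element $\prod_{uw\in E_H}D_{u,w}$ of $\L$ is a nonzero element of $\K$---does work when $E_G\cap E_H=\emptyset$. But there are two problems. The smaller one: the off-diagonal entries of your lower-right block are \emph{not} distances $D_{u,w}$ with $uw\notin E_H$. The bias row of an arc $e=uw$ has its only nonzero entries, both equal to $\pm\norm{x_u-x_w}$, in columns $u$ and $w$; restricting to your column set $S$ therefore produces off-diagonal entries $\pm D_e$ with $e\in E_H$, so the hypotheses of \cref{lem: Inversibility algebric} ($E_H\cap E_G=\emptyset$ and entries of $A$ in $\L(E_G)$) are not met and that lemma cannot be invoked here. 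The block is still invertible, but for the simpler reason that it equals $D(H,p)$ (up to signs) times the square submatrix of the incidence matrix of the forest $H$ on the columns $S$, which has determinant $\pm1$; your appeal to \cref{lem: Inversibility algebric} should be replaced by this.

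The serious gap is the case of double edges, $E_D=E_G\cap E_H\neq\emptyset$, which cannot be excluded: the lemma is applied in the paper to $G'\cup H'$ with $G'$ minimally rigid and $H'$ a spanning tree, and these routinely share edges. For $e_0=uw\in E_D$ the minor $N$ contains two rows with identical spatial parts and opposite bias parts, and a permutation term in which the $H$-copy of $e_0$ takes a spatial ($\K$-valued) column while the $G$-copy of $e_0$ takes a column of $S$ contributes the distance product $D_{e_0}\cdot\prod_{e\in E_H\setminus\set{e_0}}D_e=\prod_{e\in E_H}D_e$, i.e.\ lands on exactly the basis element you are isolating. So the coordinate of $\prod_{e\in E_H}D_e$ is no longer $\pm\det N_G$ but $\pm\det N_G$ plus cross-terms, and you have not shown these do not cancel. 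You would need to first decouple each double edge by replacing its two rows with their sum (purely spatial) and difference (purely bias), or otherwise separate $E_D$ as the paper does, before the coefficient-extraction argument closes.
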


\begin{proof}
	Let $G = (V, E_G)$ be an independent Euclidean graph and $H = (V, E_H)$ be a spanning forest. Furthermore, let $\tilde \Gamma = G \cup H$ and $(\Gamma,p)$ be a generic conic framework whose underlying conic graph is $\tilde \Gamma$. Let us prove that the rows of the conic rigidity matrix $M(\Gamma, p)$ are independent. Therefore, consider $\omega \in \Ker M(\Gamma, p)^\intercal$ and let us prove that $\omega = 0$.
	
	By sorting the arcs of $\Gamma$ starting with those associated with the edges of $G$ then those associated with the edges $H$, the rigidity matrix becomes:
	\begin{equation}
		M(\Gamma, p) =
		\begin{bmatrix}
			M_e(G, p) & B(G, p) \\
			M_e(H, p) & B(H, p)
		\end{bmatrix}
	\end{equation}
	By a slight abuse of notation, $B(G, p)$ and $B(H,p)$ denote the bias blocks of the conic rigidity matrix of $G$ and $H$ considered as directed graphs (whose arcs are directed according to $\Gamma$).

	By also decomposing $\omega = \begin{pmatrix}\omega_G^\intercal & \omega_H^\intercal\end{pmatrix}^\intercal$, the assumption $M(\Gamma, p)^\intercal \omega = 0$ gives:
	\begin{align}
		\label{eq: lem Independence equality Euclidean rigidity matrices} M_e(G, p)^\intercal \omega_G + M_e(H, p)^\intercal \omega_H &= 0\\
		\label{eq: lem Independence equality bias matrices}
		B(G, p)^\intercal \omega_G + B(H,p)^\intercal \omega_H &= 0
	\end{align}
	Introducing the incidence matrices of $G$ and $H$, see \eqref{eq: Decomposition B}, equation \eqref{eq: lem Independence equality bias matrices} becomes:
	\begin{equation}\label{eq: lem Independence equality bias matrices developped}
		B(G)^\intercal D(G,p) \omega_G + B(H)^\intercal  D(H,p) \omega_H = 0
	\end{equation}
	
	Then, since $G$ is independent, the rows of $M_e(G,p)$ are independent and therefore $M_e(G,p) M_e(G,p)^\intercal$ is invertible. Equation \eqref{eq: lem Independence equality Euclidean rigidity matrices} implies that:
	\begin{equation}\label{lem Independence simple equality distances}
		\omega_G + A \omega_H = 0
	\end{equation}
	with $A = \left[M_e(G,p) M_e(G,p)^\intercal\right]^{-1} M_e(G,p) M_e(H,p)^\intercal$.
	
	Similarly, since $H$ is a forest, the columns of its incidence matrix, \ie{} the rows of $B(H)$, are also independent \cite{bollobas_1998_modern}. Equation \eqref{eq: lem Independence equality bias matrices developped} implies that:
	\begin{equation}\label{lem Independence simple equality biases}
		C \omega_G + \omega_H = 0
	\end{equation}
	with $C = D(H,p)^{-1} \tilde C D(G,p)$ and $\tilde C = \left[B(H) B(H)^\intercal\right]^{-1} B(H) B(G)^\intercal$.
	
	Combining \eqref{lem Independence simple equality distances} and \eqref{lem Independence simple equality biases}, $\omega$ is in the null space of the matrix $P$ defined as:
	\begin{equation}
		P = \begin{bmatrix}
			I_{\card{E_G}} & A \\
			C & I_{\card{E_H}}
		\end{bmatrix}
	\end{equation}

	Then, proving that $P$ is invertible is sufficient to prove that $\omega = 0$.
	Introducing the Schur complement of the first block, $P$ is invertible if the matrix $S = I_{\card{E_H}} - CA$ is invertible.
	
	To prove that $S$ is invertible, \cref{lem: Inversibility algebric} will be used but to apply it, the double edges must be isolated. Let us first introduce the following edge sets.
	\begin{align}
		E_D &= E_G \cap E_H  & E_{G'} &= E_G \setminus E_D & E_{H'} &= E_H\setminus E_D
	\end{align}
	The set $E_D$ is the set of double edges of $\tilde \Gamma$, $E_{G'}$ and $E_{H'}$ form a decomposition of the set of simple edges $E_S$ of $\tilde \Gamma$: $E_{G'} \cup E_{H'} = E_S$ and $E_{G'} \cap E_{H'} = \emptyset$.
	We order both $E_G$ and $E_H$ starting with the common edges $E_D$.
	
	For a double edge, the corresponding rows in the Euclidean rigidity matrices are equal and the corresponding columns in the incidence matrices are opposed (since the arcs are opposed). Therefore, $\forall i \in \set{1, \dots, \card{E_D}}$:
	\begin{align}\label{eq: lem Independence double edge equalities}
		M(G, p)^\intercal e_i &= M(H, p)^\intercal e_i & B(G)^\intercal e_i &= -B(H)^\intercal e_i
	\end{align}
	Using once again the independence of the rows of $M_e(G,p)$ and $B(H)$, \eqref{eq: lem Independence double edge equalities} gives:
	\begin{align}
		A e_i &= e_i & C e_i &= -e_i & \tilde C e_i &= -e_i
	\end{align}
	Then, the matrices $A$, $C$ and $\tilde C$ can be decomposed as:
	\begin{align}
		A &= \begin{bmatrix}
			I_{\card{E_D}} & A_1\\
			0 & A_2
		\end{bmatrix} &
		C &= \begin{bmatrix}
			-I_{\card{E_D}} & C_1\\
			0 & C_2
		\end{bmatrix} &
		\tilde C &= \begin{bmatrix}
			-I_{\card{E_D}} & \tilde C_1\\
			0 & \tilde C_2
		\end{bmatrix}
	\end{align}
	where the blocks $A_1$, $A_2$, $C_1$, $C_2$, $\tilde C_1$ and $\tilde C_2$ do not have any particular structure.

	Therefore, the matrix $S$ is re-expressed as:
	\begin{equation}
		S = \begin{bmatrix}
			2I_{\card{E_D}} & A_1 - C_1 A_2\\
			0 & S_2
		\end{bmatrix}
	\end{equation}
	with $S_2 = I_{\card{E_{H'}}} - C_2 A_2$.
	
	Consequently, $P$ and $S$ are invertible if $S_2$ is invertible.
	
	Note that by construction, $C_2 = D(H', p)^{-1} \tilde C_2 D(G', p)$ where $H'$ and $G'$ denote the graph induced by the edge sets $E_{H'}$ and $E_{G'}$. Then, $S_2$ is invertible if the matrix $\tilde S_2 = D(H', p) - \tilde C_2 D(G', p) A_2$ is invertible.
	
	The sets $E_{H'}$ and $E_{G'}$ are disjoints. Furthermore, by considering every entry as a function of the coordinates, the entries of $\tilde C$ and $A$ belong to $\K$. Then, the entries of $\tilde C_2 D(G', p) A_2$ belong to $\L(E_{G'})$. Therefore by \cref{lem: Inversibility algebric}, $\tilde S_2$ is invertible.
	Thus $S_2$, $S$, $P$ are invertible and $\omega = 0$.
\end{proof}

With this lemma, \ref{it: Converse sense} can be proved.
\begin{proof}[Proof of \ref{it: Converse sense}]
	Let $G$ be a Euclidean rigid graph in $\R^d$ and $H$ be a Euclidean connected graph $H$. Let $\tilde \Gamma = G \cup H$ be their union. Let us prove that $\tilde \Gamma$ is rigid in $\R^d$.
	 
	Since $G$ is rigid in $\R^d$, $\exists G'$ a sub-graph of $G$ minimally rigid in $\R^d$. Since $H$ is connected, $\exists H'$ a sub-graph of $H$ which is a spanning tree. Applying \cref{lem: Independence union minimally rigid with forest}, their union $\tilde \Gamma' = G' \cup H'$ is independent in $\R^d$. Since minimally rigid Euclidean graphs have $S_e(n,d)$ edges and spanning trees have $n-1$ edges, $\tilde \Gamma'$ has $S_e(n,d) + n - 1 = S(n,d)$ edges. Since $\tilde \Gamma'$ is independent in $\R^d$ and has $S(n,d)$ edges, it is minimally rigid in $\R^d$. Thus, $\tilde \Gamma$ is rigid in $\R^d$.
\end{proof}

To prove \ref{it: Direct sense}, a decomposition of a rigid conic graph must be found. As already mentioned, a decomposition $(G, H)$ may not satisfy the conditions of \cref{the: Decomposition of conic rigid graph}. The existence of the particular decomposition is proved by induction without loss of generality for a minimally rigid conic graph. At each steps, a decomposition $(G, H)$ is proposed having $G$ minimally rigid and $H$ with fewer connected component. The induction will stop when $H$ is connected.

The proof relies on two lemmas: \cref{lem: Existence of minimially rigid Euclidean sub-graph} ensures that a decomposition $(G, H)$ having $G$ minimally rigid exists and \cref{lem: Induction decomposition} proves that if a decomposition $(G,H)$ exists with $G$ minimally rigid and $H$ not connected, then another decomposition $(G', H')$ exists with $G'$ minimally rigid and $H'$ having fewer connected components than $H$.

\begin{lemma}\label{lem: Existence of minimially rigid Euclidean sub-graph}
	Let $d \ge 2$. Let $\tilde \Gamma = (V, E_S, E_D)$ be a conic graph.
	
	If $\tilde \Gamma$ is minimally rigid in $R^d$ then, there exists $(G,H)$ a decomposition of $\tilde \Gamma$ such that $G$ is minimally rigid in $R^d$.
\end{lemma}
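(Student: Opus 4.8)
The plan is to reduce the statement to a matroid augmentation argument inside the Euclidean rigidity matroid. Fix a generic configuration $p$ of $\Gamma$ and work with the conic rigidity matrix, written in its block-column form $M(\Gamma,p) = \begin{bmatrix} M_e(\Gamma,p) & B(\Gamma,p)\end{bmatrix}$. Since $\tilde\Gamma$ is minimally rigid, its rows are linearly independent and there are $\card{E_S} + 2\card{E_D} = S(n,d) = S_e(n,d) + n - 1$ of them, so $\rank M(\Gamma,p) = S_e(n,d) + n - 1$. (Here we use \cref{the: Multidimensional case rigidity} only to know that independence of $\tilde\Gamma$ is witnessed by \emph{this} generic $p$, and that a generic conic configuration is in particular a generic Euclidean configuration.)

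\emph{Step 1: the underlying Euclidean graph is rigid.} I would first show that $\rank M_e(\Gamma,p) = S_e(n,d)$, i.e.\ $(V, E_S\cup E_D)$ is Euclidean rigid in $\R^d$. This follows from the column-block inequality $\rank M(\Gamma,p) \le \rank M_e(\Gamma,p) + \rank B(\Gamma,p)$ together with $\rank M_e(\Gamma,p) \le S_e(n,d)$ and $\rank B(\Gamma,p) = \rank B(\Gamma) \le n-1$ (because $D(\Gamma,p)$ is an invertible diagonal matrix, cf.\ \eqref{eq: Decomposition B}, and the transposed incidence matrix of a multigraph on $n$ vertices has rank at most $n-1$). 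The chain $S_e(n,d) + n - 1 = \rank M(\Gamma,p) \le \rank M_e(\Gamma,p) + (n-1) \le S_e(n,d) + n - 1$ then forces $\rank M_e(\Gamma,p) = S_e(n,d)$.

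\emph{Step 2: the double edges are Euclidean-independent.} For a double edge $\{u,w\}\in E_D$, the two arcs $uw$ and $wu$ contribute rows of $M(\Gamma,p)$ of the form $(\rho \mid \beta)$ and $(\rho \mid -\beta)$, where $\rho$ involves only the first $dn$ columns and $\beta$ only the last $n$ columns. The $2\card{E_D}$ such rows are independent (being a subrow set of an independent set), and their span also contains the $2\card{E_D}$ vectors $(\rho_i\mid 0)$ and $(0\mid \beta_i)$. Since the $(\rho_i\mid 0)$ lie in the span of the first $dn$ coordinate vectors and the $(0\mid\beta_i)$ in the span of the last $n$, independence of all $2\card{E_D}$ of them forces independence of the family $\{(\rho_i\mid 0)\}_i$, hence of the Euclidean rows of the double edges. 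So $E_D$ is independent in the Euclidean rigidity matroid.

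\emph{Step 3: conclude by augmentation.} Since $E_D$ is independent and the Euclidean rigidity matroid restricted to the edge set $E_S\cup E_D$ has rank $S_e(n,d)$ by Step 1, extend $E_D$ to a basis $E_1$ of that restriction: then $E_D\subseteq E_1\subseteq E_S\cup E_D$ and $\card{E_1} = S_e(n,d)$, so $G:=(V,E_1)$ is independent in $\R^d$ with $S_e(n,d)$ edges, i.e.\ minimally rigid in $\R^d$. Put $E_2 := (E_S\cup E_D)\setminus(E_1\setminus E_D)$ and $H:=(V,E_2)$; writing $E_1 = E_D\sqcup B'$ with $B'\subseteq E_S$, one checks directly $E_1\cap E_2 = E_D$ and $E_1\triangle E_2 = B'\sqcup(E_S\setminus B') = E_S$, so $(G,H)$ is a decomposition of $\tilde\Gamma$ with $G$ minimally rigid, as required.

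The main obstacle I expect is Step 2 (and the attendant bookkeeping about which coordinate blocks the row fragments live in): everything hinges on the clean split of a double-edge's arc-rows into a Euclidean part and a bias part lying in complementary coordinate subspaces, which is what transfers conic independence down to Euclidean independence of $E_D$. Steps 1 and 3 are then routine rank/matroid arguments.
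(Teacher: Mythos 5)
Your proof is correct and follows essentially the same route as the paper's: show that the double edges $E_D$ are independent in the Euclidean rigidity matroid, show that the underlying simple graph $(V,E_S\cup E_D)$ is Euclidean rigid, and then extend $E_D$ to a basis of $S_e(n,d)$ edges, reading off $H$ from the symmetric difference. The only difference is in the linear algebra used for the two sub-claims --- you argue by a column-block rank count $\rank M \le \rank M_e + \rank B$ and a span-dimension argument on the double-edge rows, whereas the paper exhibits explicit nonzero kernel vectors $\begin{pmatrix}\omega_D^\intercal & \omega_D^\intercal & 0^\intercal\end{pmatrix}^\intercal$ and $\begin{pmatrix}v^\intercal & 0_n^\intercal\end{pmatrix}^\intercal$ --- and both versions are routine.
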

\begin{proof}
	Proving that there exists a set of edges $E_G$ containing the double edges $\tilde \Gamma$ such that the Euclidean graph $G = (V,E_G)$ is minimally rigid is sufficient. The graph $H$ would then be deduced by letting its edge set be $E_H = E_S \triangle E_G$ (symmetric difference). 
	
	Let $G_D = (V, E_D)$ and $G_S = (V, E_S)$ be the Euclidean graphs induced by the double edges and the simple edges of $\tilde \Gamma$. Consider a generic conic framework $(\Gamma, p)$, with the correct ordering of the arcs, the conic rigidity matrix of $(\Gamma, p)$ is:
	\begin{equation}
		M(\Gamma, p) = \begin{bmatrix}
			M_e(G_D, p) & B(G_D, p) \\
			M_e(G_D, p) & -B(G_D, p) \\
			M_e(G_S, p) & B(G_S, p)
		\end{bmatrix}
	\end{equation}

	First, let us prove that the Euclidean graph $G_D$ is independent. If $G_D$ was dependent, there would exist a non-null vector $\omega_D \in \Ker M_e(G_D)^\intercal$. Therefore the vector $\omega = \begin{pmatrix}\omega_D^\intercal & \omega_D^\intercal & 0_{\card{E_S}}^\intercal\end{pmatrix}^\intercal$ would be a non-null vector of $\Ker M(\Gamma, p)^\intercal$. This would contradict the fact that $\tilde \Gamma$ is independent, thus $G_D$ is independent.
	
	Now, let us prove that $E_D$ can be extended with edges from $E_S$ to create $E_D$ an independent set of $S_e(n,d)$ edges. Since $\tilde \Gamma$ is rigid, the Euclidean graph $G_1 = (V, E_D \cup E_S)$ is rigid. Indeed, if $G_1$ was not rigid, there would exist a non-trivial Euclidean velocity vector $v \in \Ker M_e(G_1, p)$ \cite{hendrickson_1992_conditions}. Then, $q = \begin{pmatrix} v^\intercal & 0_n^\intercal \end{pmatrix}^\intercal$ would be a non-trivial velocity vector admissible for $(\Gamma, p)$ which contradicts that $\tilde \Gamma$ is rigid.
	Since $G_1$ is rigid, the rank of the Euclidean rigidity matrix $M_e(G_1, p)$ is $S_e(n,d)$. Consequently, the independent set $E_D$ can be completed with edges from $E_S$ to create a set $E_G$ of $S_e(n,d)$ independent edges. The resulting graph $G = (V, E_G)$ is minimally rigid and generates the decomposition.
\end{proof}

To prove the second key lemma, the following result is required. It is a consequence of \cref{lem: Independence union minimally rigid with forest}.

\begin{lemma}\label{lem: Existence swap}
	Let $d\ge 2$, $G = (V, E_G)$ and $H = (V, E_H)$ be two Euclidean graphs, $\tilde \Gamma = G \cup H$ be their union and $U \subset V$ be a subset of vertices. Furthermore, assume that $G$ and $\tilde \Gamma$ are both minimally rigid in $\R^d$.
	
	If $\restr{H}{U}$, the restriction of $H$ to $U$, is connected and has a cycle, then there exist $uv \in E_H$ and $wz \in E_G$ with $u, v, w \in U$ and $z \notin U$ such that the graphs $G'$ obtained from $G$ by replacing the edge $wz$ by the edge $uv$ is also minimally rigid in $\R^d$.
\end{lemma}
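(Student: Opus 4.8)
The plan is to run the whole argument inside the generic $d$-dimensional Euclidean rigidity matroid $\mathcal{R}_d$ on the edge set of the complete graph on $V$, exploiting that $E_G$ is a \emph{basis} of $\mathcal{R}_d$: being minimally rigid, $G$ is independent and $\card{E_G}=S_e(n,d)$, which is exactly the rank of $\mathcal{R}_d$. I would fix a generic configuration $p$ of $\tilde\Gamma$, so that the Euclidean rigidity rows at $p$ realise $\mathcal{R}_d$ and the restriction of $p$ to $U$ is again generic. For a non-edge $uv\notin E_G$, write $C_G(uv)\subseteq E_G+uv$ for its fundamental circuit; by the basis-exchange calculus of matroids, $E_G-wz+uv$ is again a basis — hence a minimally rigid graph, since it still has $S_e(n,d)$ edges — precisely when $wz\in C_G(uv)$. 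So the lemma reduces to producing $uv\in E_H$ with both ends in $U$ and $uv\notin E_G$, together with an edge $wz\in C_G(uv)$ having one end in $U$ and the other in $V\setminus U$.

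First I would collect the arithmetic forced by minimal rigidity. Write $E_G^U$ (resp. $E_H^U$) for the edges of $G$ (resp. $H$) with both ends in $U$. Since the number of arcs of $\tilde\Gamma$ equals $\card{E_G}+\card{E_H}$ (double edges counted twice) and also equals $S(n,d)=S_e(n,d)+n-1$, we get $\card{E_H}=n-1$; as $\restr{H}{U}$ is connected and has a cycle, $\card{E_H^U}\ge\card{U}$, and in particular $U\subsetneq V$ (else $\card{E_H}\ge n$). I would also record the standard fact that a circuit $C$ of $\mathcal{R}_d$ is connected as a graph: if it split as a disjoint union of vertex-disjoint parts, the associated rigidity rows would have disjoint column supports, so rank would add over the parts, and since every proper subset of a circuit is independent this would force $\card{C}=\rank(C)$, contradicting $\card{C}=\rank(C)+1$.

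The main step is a proof by contradiction: assume no such swap exists. For $uv\in E_H^U$ with $uv\in E_G$ a swap is impossible for trivial cardinality reasons; for $uv\in E_H^U\setminus E_G$ the failure means $C_G(uv)$ contains no edge joining $U$ to $V\setminus U$, and since $C_G(uv)$ is connected and contains $uv$, both of whose ends lie in $U$, it then contains no vertex outside $U$, whence $C_G(uv)-uv\subseteq E_G^U$ and $uv$ lies in the $\mathcal{R}_d$-closure $\overline{E_G^U}$. In either case $uv\in\overline{E_G^U}$, so $E_G^U\cup E_H^U\subseteq\overline{E_G^U}$ and $\rank_{\mathcal{R}_d}(E_G^U\cup E_H^U)=\card{E_G^U}$. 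On the other hand $\restr{\tilde\Gamma}{U}$ is independent (its conic rigidity rows are those of the minimally rigid $\tilde\Gamma$ with the identically-zero columns indexed by $V\setminus U$ deleted), so its conic rigidity matrix has full row rank $\card{E_G^U}+\card{E_H^U}$, the number of its arcs. But this rank is at most $\rank M_e(\restr{\Gamma}{U},p)+\rank B(\restr{\Gamma}{U},p)$, which equals $\card{E_G^U}+(\card{U}-1)$ — the first summand by genericity of $\restr{p}{U}$ together with the closure identity, the second because $B(\restr{\Gamma}{U},p)=D(\restr{\Gamma}{U},p)\,B(\restr{\Gamma}{U})$ with $\restr{\Gamma}{U}$ connected (it contains $\restr{H}{U}$). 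Hence $\card{E_H^U}\le\card{U}-1$, contradicting $\card{E_H^U}\ge\card{U}$. Therefore some admissible swap succeeds; as the case $uv\in E_G$ can never succeed, the successful $uv$ is not in $E_G$, and the witnessing edge $wz\in C_G(uv)$ necessarily joins $U$ to $V\setminus U$; relabelling so that $w\in U$, $z\notin U$ yields exactly the pair asserted by the lemma.

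The step I expect to be the main obstacle is this last confrontation: turning the purely combinatorial hypothesis ``no fundamental circuit of a $U$-edge meets the boundary of $U$'' into the clean Euclidean-matroid identity $E_G^U\cup E_H^U\subseteq\overline{E_G^U}$ (which rests on connectivity of rigidity circuits) and then colliding it with the conic independence of $\restr{\tilde\Gamma}{U}$ via the block-rank inequality $\rank M\le\rank M_e+\rank B$ and the rank of an incidence matrix. The remaining ingredients — the basis / fundamental-circuit bookkeeping and the edge counts imposed by minimal rigidity — are routine once this set-up is in place.
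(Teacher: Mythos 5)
Your proof is correct, and its skeleton is the one the paper uses: treat $E_G$ as a basis of the generic rigidity matroid, reduce the swap to finding an edge $uv$ of $\restr{H}{U}$ whose fundamental circuit crosses the boundary of $U$, and derive a contradiction from the conic independence of $\restr{\tilde\Gamma}{U}$ when no such edge exists. Where you genuinely diverge is in how that contradiction is extracted. The paper splits $\restr{H}{U}$ into a spanning tree plus surplus cycle edges, writes the conic rigidity matrix of $\restr{\Gamma}{U}$ in blocks, and performs an explicit elimination (the matrices $A$, $A'$, $C$, $C'$ and the invertibility of $I-CA$, imported from \cref{lem: Independence union minimally rigid with forest}) to exhibit the surplus rows as combinations of the rest. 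You instead note that the contradiction hypothesis forces $E_H^U\subseteq\overline{E_G^U}$ (your notation for the edges of $H$, $G$ inside $U$ and the closure in the rigidity matroid), so $\rank M_e(\restr{\Gamma}{U},p)\le\card{E_G^U}$; you bound the bias block by $\card{U}-1$ via the incidence matrix; and you collide the subadditive estimate $\rank M\le\rank M_e+\rank B$ with the row count $\card{E_G^U}+\card{E_H^U}\ge\card{E_G^U}+\card{U}$ forced by a connected graph with a cycle. This is shorter, avoids any reliance on \cref{lem: Independence union minimally rigid with forest}, and isolates the one structural fact both arguments need --- connectivity of rigidity circuits --- which you actually prove (via disjoint column supports) where the paper only asserts it. The trade-off is that the paper reuses machinery it has already built, whereas your rank count is self-contained; the final relabelling of the boundary edge $wz$ and the exclusion of the case $uv\in E_G$ are both handled correctly.
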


For example, consider the decomposition $(G_2, H_2)$ introduced in \cref{fig: Decomposition conic graph} and the set $U = \set{1,2,3,4}$. Both $G_2$ and the union $G_2 \cup H_2$ are rigid in $\R^2$. Furthermore, the sub-graph $\restr{H_2}{U}$ is connected and has a cycle $\rC = \set{2,3,4}$. Therefore, \cref{lem: Existence swap} applies. With $u = 2$, $v = w = 3$ and $z = 5$: the graph $G'$ obtained is the graph $G_1$ of \cref{fig: Decomposition conic graph} which is indeed minimally rigid. \cref{fig: Illustration lemma existence swap} present more complex examples sharing the same decomposition $(G,H)$ but with three different sets $U$. 

\begin{figure}
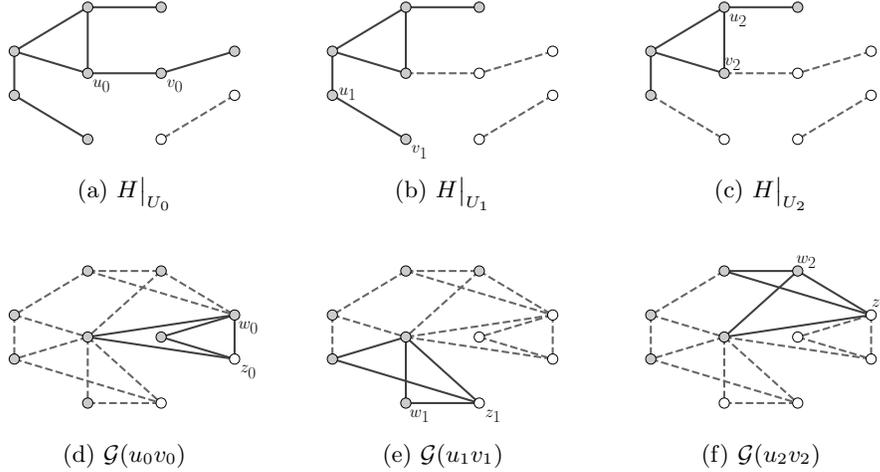

	\centering
	\null\hfill
	\begin{subfigure}[t]{0.30\textwidth}
		\centering
		\resizebox{\linewidth}{!}{\input{fig/decomposition_H_U0.pgf}}
		\caption{$\restr{H}{U_0}$}
	\end{subfigure}
	\hfill
	\begin{subfigure}[t]{0.30\textwidth}
		\centering
		\resizebox{\linewidth}{!}{\input{fig/decomposition_H_U1.pgf}}
		\caption{$\restr{H}{U_1}$}
	\end{subfigure}
	\hfill
	\begin{subfigure}[t]{0.30\textwidth}
		\centering
		\resizebox{\linewidth}{!}{\input{fig/decomposition_H_U2.pgf}}
		\caption{$\restr{H}{U_2}$}
	\end{subfigure}
	\hfill\null\\
	\null\hfill
	\begin{subfigure}[t]{0.30\textwidth}
		\centering
		\resizebox{\linewidth}{!}{\input{fig/decomposition_G_U0.pgf}}
		\caption{$\rG(u_0v_0)$}
	\end{subfigure}
	\hfill
	\begin{subfigure}[t]{0.30\textwidth}
		\centering
		\resizebox{\linewidth}{!}{\input{fig/decomposition_G_U1.pgf}}
		\caption{$\rG(u_1v_1)$}
	\end{subfigure}
	\hfill
	\begin{subfigure}[t]{0.30\textwidth}
		\centering
		\resizebox{\linewidth}{!}{\input{fig/decomposition_G_U2.pgf}}
		\caption{$\rG(u_2v_2)$}
	\end{subfigure}
	\hfill\null\\
	\caption{Illustrations of \cref{lem: Existence swap} in $\R^2$. Each column presents an example with a different set $U_i$ represented by the gray vertices (the others are white). The first row represents the graph $H$ with the edge $u_iv_i$ labeled and the second row represents the graph $G$ with the edge $w_iz_i$ labeled. The restriction of $H$ to $U_i$ is represented in solid lines while the rest of the graph is on dashed lines. For the graph $G$, the edge set $\rG(u_iv_i)$ (introduced in the proof of \cref{lem: Existence swap}) is represented in solid lines while the other edges are on dashed lines.}
	\label{fig: Illustration lemma existence swap}
\end{figure}

\begin{proof}[Proof of \cref{lem: Existence swap}]
	Let us consider a generic configuration $p$. In this proof, the edges are associated with their corresponding rows in the Euclidean rigidity matrix viewed as vectors, \ie{} the edge $uw$ is associated with the vector $M_e(\set{uw}, p)^\intercal \in \R^{dn}$.
	
	Since $G$ is minimally rigid, any edge can be uniquely written as a linear combination of $E_G$. The generation comes from rigidity ($E_G$ has the maximal rank) and the uniqueness from the independence of $E_G$. In that sense, $E_G$ is a basis of the space of edges. For any edge $uv$, an edge $wz$ from the basis $E_G$ is said to generate $uv$ if the coordinate associated with $wz$ in the decomposition of $uv$ on $E_G$ is not-null. For an edge $uv$, we denote $\rG(uv)$ the set of its generating edges. Graphically, $\rG(uv)$ is the smallest subset $F$ of $E_G$ such that the family $F \cup \set{uv}$ is dependent. For example, in \cref{fig: Illustration lemma existence swap} the three sets $\rG(u_iv_i)$ are highlighted in the graph $G$.
	For any $uv \notin E_G$, replacing any edge of $\rG(uv)$ by $uv$ creates a new basis and therefore a new minimally rigid graph. Therefore, it is sufficient to prove that there exists an edge $uv \in E_H$ with $u, v \in U$ such that $\rG(uv)$ contains an edge $wz$ with $w \in U$ and $z \notin U$.
	
	First, let us prove that there exists an edge $uv \in E_H$ with $u,v \in U$ such that $\rG(uv)$ contains an edge $wz$ with $z \notin U$ (and without any constraint on $w$). By contradiction, let us assume that for any $uv \in E_H$ with $u,v \in U$, every edge $wz \in \rG(uv)$ has $w,z \in U$; this means that the edges of $\restr{H}{U}$ are generated by edges of $\restr{G}{U}$.
	Let $\Gamma$ be a directed graph whose underlying conic graph is $\tilde \Gamma$ and let $\restr{\Gamma}{U}$ its restriction to $U$. By assumption, the graph $\restr{H}{U}$ is connected and has a cycle, let us assume \Wlog{} that $\restr{H}{U} = \restr{\hat H}{U} \cup \restr{H'}{U}$ with $\restr{\hat H}{U}$ being a spanning tree on $U$ and $\restr{H'}{U}$ forming the cycles. By assumption, the rows of the Euclidean rigidity matrix of $\restr{H}{U}$ are linear combinations of those of $\restr{G}{U}$. Therefore, there exist two matrices $A$ and $A'$ such that:
	\begin{align}
		M_e(\restr{\hat H}{U},p) &= AM_e(\restr{G}{U}, p) & M_e(\restr{H'}{U}, p) &= A' M_e(\restr{G}{U}, p)
	\end{align}
	Furthermore, since $\restr{\hat H}{U}$ is a spanning tree, the rows of $B(\restr{\hat H}{U}, p)$ generates any rows in a bias matrix associated with an edge between two vertices of $U$. This is a consequence of a well-known result in graph theory: in an incidence matrix the columns associated with a spanning tree form a basis of the columns, see \eg{} \cite{bollobas_1998_modern}. Therefore, there also exist two matrices $C$ and $C'$ such that:
	\begin{align}
		B(\restr{G}{U}, p) &= C B(\restr{\hat H}{U}, p) & B(\restr{H'}{U}, p) &= C' B(\restr{\hat H}{U}, p)
	\end{align}
	With these notations and the correct ordering of the arcs, the conic rigidity matrix of $\restr{\Gamma}{U}$ becomes:
	\begin{equation}
		M\left(\restr{\Gamma}{U},p\right) = \begin{bmatrix}
			M_e(\restr{G}{U}, p) & B(\restr{G}{U}, p) \\
			M_e(\restr{\hat H}{U}, p) & B(\restr{\hat H}{U}, p) \\
			M_e(\restr{H'}{U}, p) & B(\restr{H'}{U}, p)
		\end{bmatrix} = \begin{bmatrix}
			M_e(\restr{G}{U}, p) & C B(\restr{\hat H}{U}, p)  \\
			A M_e(\restr{G}{U}, p) & B(\restr{\hat H}{U}, p) \\
			A' M_e(\restr{G}{U}, p) & C' B(\restr{\hat H}{U}, p)
		\end{bmatrix} 
	\end{equation}
	In term of rank, this conic rigidity matrix is equivalent to the matrix:
	\begin{equation}
		\begin{bmatrix}
			(I - CA) M_e(\restr{G}{U}, p) & 0  \\
			A M_e(\restr{G}{U}, p) &  B(\restr{\hat H}{U}, p) \\
			A' M_e(\restr{G}{U}, p) & C' B(\restr{\hat H}{U}, p)
		\end{bmatrix}
	\end{equation}
	Since $\restr{G}{U}$ is independent and $\restr{\hat H}{U}$ is a spanning tree, as corollary of \cref{lem: Independence union minimally rigid with forest}, the matrix $I - CA$ is invertible (see the proof of \cref{lem: Independence union minimally rigid with forest}). Therefore the rows corresponding to $H'$ are linear combinations of the others. This is a contradiction since $\tilde \Gamma$ is independent. Thus, there exists an edge $uv \in E_H$ with $u, v \in U$ such that there exists an edge $wz \in \rG(uv)$ with $z \notin U$.
	
	We conclude by noting that the edges in $\rG(uv)$ must be connected and that $\rG(uv)$ must contain edges incident to $u$ and $v$. Consequently, there is an edge $wz \in \rG(uv)$ with $w\in U$ and $z \notin U$.
\end{proof}

\begin{lemma}\label{lem: Induction decomposition}
	Let $\tilde \Gamma$ be a conic graph and $(G, H)$ be a decomposition of $\tilde \Gamma$.
	
	If $\tilde \Gamma$ and $G$ are both minimally rigid in $\R^d$ and if $H$ has $k \ge 2$ connected components, then there exists $(G', H')$ another decomposition of $\tilde \Gamma$ such that $G'$ is minimally rigid in $\R^d$ and $H'$ has $k - 1$ connected components.
\end{lemma}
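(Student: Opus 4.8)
The plan is to build $(G',H')$ from $(G,H)$ by a single edge exchange between $G$ and $H$, the exchange being provided by \cref{lem: Existence swap}, and to choose the vertex set on which that lemma is applied so that the exchange lowers the number of connected components of $H$ by exactly one. I would first nail down the counting. As $\tilde\Gamma$ is minimally rigid in $\R^d$ it has $S(n,d)=S_e(n,d)+(n-1)$ edges (double edges counted twice), and as $G$ is minimally rigid $\card{E_G}=S_e(n,d)$; since $(G,H)$ is a decomposition, $\card{E_G}+\card{E_H}$ equals that total, so $\card{E_H}=n-1$. Hence $H$, having $n$ vertices, $n-1$ edges and $k\ge 2$ components, has circuit rank $(n-1)-(n-k)=k-1\ge 1$ and therefore contains a cycle. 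The goal thus reduces to exhibiting an edge $uv\in E_H$ that lies on a cycle of $H$, together with an edge $wz\in E_G$ joining two distinct components of $H$, such that $G-wz+uv$ is minimally rigid: then $H-uv+wz$ loses the edge $uv$ without disconnecting anything and gains the cross edge $wz$, which merges two components, so it has $k-1$ components; and since $uv\notin E_G$ and $wz\notin E_H$, both edges are simple in $\tilde\Gamma$ and the exchange only swaps which of $G,H$ carries each, leaving $\tilde\Gamma$ unchanged, so $(G-wz+uv,\,H-uv+wz)$ is again a decomposition.

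Next I would run \cref{lem: Existence swap} on the most natural target. Let $K$ be a component of $H$ containing a cycle and take $U:=V(K)$; the hypotheses hold ($G$ and $\tilde\Gamma$ are minimally rigid, $\restr{H}{U}=K$ is connected and has a cycle), so the lemma returns $uv\in E_H$ and $wz\in E_G$ with $u,v,w\in U$, $z\notin U$, and $G':=G-wz+uv$ minimally rigid. Set $H':=H-uv+wz$. Then $uv\notin E_G$ (part of the lemma's conclusion) and $wz\notin E_H$ (since $w\in V(K)$, $z\notin V(K)$, and $K$ is a full component of $H$), so as above $(G',H')$ is a decomposition of $\tilde\Gamma$ with $G'$ minimally rigid.

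It remains to verify the component count. Because $z\notin V(K)$, adding $wz$ joins $K$ to a different component and so drops the count by one; it is therefore enough that removing $uv$ not raise it, i.e. that $uv$ lie on a cycle of $K$. If it does, $H'$ has $k-1$ components and we are done. The obstacle is that \cref{lem: Existence swap} may return a $uv$ that is a cut-edge of $K$: then $K$ splits into $K_1\sqcup K_2$ with the cycle of $K$ inside, say, $K_1$, the graph $H-uv$ has $k+1$ components, and $H'$ has $k$, so there is no progress.

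I would dispose of this by exploiting the freedom in the choice of $U$ (and of which generating edge of $uv$ to swap: any $f\in\rG(uv)$ gives a minimally rigid $G-f+uv$). One clean option is to apply \cref{lem: Existence swap} to $U=V(\rC)$ for a cycle $\rC$ of $H$: since $\restr{H}{V(\rC)}$ contains the spanning cycle $\rC$ it is $2$-edge-connected, so the returned $uv$ is automatically on a cycle — but then $z\notin V(\rC)$ no longer forces $z$ into a different component of $H$. Reconciling "$uv$ on a cycle'' (which wants $U$ small, ideally $2$-edge-connected) with "$wz$ crosses two components'' (which wants $U$ to be an entire component) is the heart of the matter, and is where I expect the real work: the plan is to take $U$ an entire component carrying a fixed cycle $\rC$ and, in the cut-edge case, to choose the swapped generating edge so that its $V(K)$-endpoint lies on the $K_2$-side (the piece not containing $\rC$); then $V(K_1)$ survives as a component of $H'$ that still carries $\rC$ and is strictly smaller than $V(K)$, so iterating the construction on the shrinking component carrying $\rC$ terminates, and the terminal step (where $uv$ is necessarily on a cycle) produces the required decomposition. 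Establishing that such a $K_2$-side generating edge always exists — via the connectivity of the fundamental rigidity circuit $\rG(uv)\cup\{uv\}$ — is the main obstacle; the edge count, the check that the exchange preserves the decomposition, and the component arithmetic once $uv$ is on a cycle are all routine.
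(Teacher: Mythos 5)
Your setup (the edge count $\card{E_H}=n-1$, the existence of a cycle $\rC$ in $H$, and the observation that a single application of \cref{lem: Existence swap} to the component $K$ of $\rC$ suffices \emph{provided} the returned edge $uv$ lies on a cycle of $K$) matches the paper, and you have correctly isolated the real difficulty: $uv$ may be a cut-edge of $K$. But your proposed repair has a genuine gap at exactly the point you flag. You need a $wz\in\rG(uv)$ whose endpoint inside $V(K)$ lies on the $K_2$-side of the cut, and this does not follow from \cref{lem: Existence swap} or from the connectivity of $\rG(uv)\cup\{uv\}$: that connectivity only guarantees \emph{some} boundary edge $wz$ with $w\in U$, $z\notin U$, and since the edges of $G$ are unrelated to the component structure of $H$ inside $U$, the walk in $\rG(uv)$ from $v$ to the exterior vertex may first pass into $K_1$ and exit $U$ from there, leaving every $U$-exiting edge of $\rG(uv)$ anchored in $K_1$. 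Worse, if you are forced to swap with a $wz$ having $w\in K_1$, your termination measure breaks: $H-uv+wz$ then has the component of $\rC$ equal to $K_1$ \emph{merged with} the component of $z$, which can be strictly larger than $K$, so the iteration need not terminate. As written, the argument is a plan with its central claim unproven.

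The paper avoids needing any control over which side $w$ falls on. It first runs a pure \emph{selection} phase with $G$ and $H$ unchanged: starting from $U_0=V(K)$ it repeatedly applies \cref{lem: Existence swap}, and whenever the returned $H$-edge $u_iv_i$ is not on a cycle it \emph{deletes} that edge (rather than exchanging it) and restricts to the piece $U_{i+1}$ containing $\rC$, producing a strictly descending chain $U_0\supsetneq U_1\supsetneq\cdots$ with one candidate pair $(u_iv_i,w_iz_i)$ per level and $z_i\notin U_i$. Only then does it perform exchanges, starting from the innermost level (whose $H$-edge is guaranteed to lie on a cycle of $H$) and moving outward along the indices $\sigma(i)$ determined by where each $z_{\sigma(i)}$ lands: each exchange either joins $U_0$ to its complement (done) or inserts an edge that creates a cycle through the $H$-edge of some outer level $U_j\setminus U_{j+1}$, which thereby becomes safely exchangeable in turn. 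If you want to salvage your interleaved version, you would either have to prove the $K_2$-side selection property (which I do not believe holds in general for a fixed $uv$) or adopt the paper's decoupling of selection from exchange.
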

\begin{proof}
	As $\tilde \Gamma$ is minimally rigid, it has $S(n,d)$ edges. Similarly, as $G$ is minimally rigid, it has $S_e(n,d)$ edges. Therefore, the graph $H$ has $n-1$ edges. By a well-known result on graphs, see \eg{} \cite{bollobas_1998_modern}, either $H$ is a tree or it contains a cycle. Since $H$ has more than one component, it is not a tree and therefore it admits at least one cycle. Let $\rC$ be a cycle of $H$.
	
	Let us construct a new decomposition $(G', H')$ from $(G, H)$ in which $G'$ is minimally rigid and the connected component of $\rC$ in $H$ is connected with another connected component in $H'$. This new decomposition is obtained by exchanging some edges between $E_G$ and $E_H$ selected using \cref{lem: Existence swap} (possibly several times). Note that applying directly \cref{lem: Existence swap} with $U = \rC$ might not work since the exchange could create another cycle in $H$: that is the case for the cycle presented in \cref{fig: Illustration lemma existence swap}. In that example, the only exchangeable edge in $\restr{H}{\rC}$ is $u_2v_2$ since the other two edges are already in $G$ (and therefore cannot be exchanged). Unfortunately, $\rG(u_2v_2)$ contains only edges between vertices belonging to $U_0$, therefore any exchange would create a new cycle in $H$: for example exchanging $u_2v_2$ with $w_2z_2$ would give the decomposition $(G', H')$ in \cref{fig: Illustration of Lemma induction decomposition}.
 	
 	To avoid, this issue, let us first select several pairs of edges by the following induction. Start by letting $U_0$ be the connected component of $\rC$ in $H$ and $H_0$ be the restriction of $H$ to $U_0$. By \cref{lem: Existence swap}, there exist two edges $u_0v_0 \in E_H$ and $w_0z_0 \in E_G$ preserving the rigidity of $G$ with $u_0, v_0, w_0 \in U_0$ and $z_0 \notin U_0$. Then, while $u_iv_i$ is not in  a cycle of $H$ (not necessary $\rC$), let $U_{i+1}$ be the connected component of $\rC$ in $H_i\setminus\set{u_iv_i}$ (the graph $H_i$ without the edge $u_iv_i$) and $H_{i+1}$ be the restriction of $H$ to $U_{i+1}$. By \cref{lem: Existence swap}, there exist two edges $u_{i+1}v_{i+1} \in E_H$ and $w_{i+1}z_{i+1} \in E_G$ preserving the rigidity of $G$ with $u_{i+1}, v_{i+1}, w_{i+1} \in U_{i+1}$ and $z_{i+1} \notin U_{i+1}$. This procedure is illustrated on \cref{fig: Illustration lemma existence swap}: the sets $U_i$ have been chosen according to the induction.
 	
 	The induction finishes since the cardinal of $U_i$ decreases at each iteration and $U_i$ always contains at least the cycle $\rC$. Let $K$ denote the number of iterations. At the end of the induction, $K$ pairs of edges $(u_iv_i, w_iz_i)\in E_H\times E_G$ with $i \in \set{0, \dots, K-1}$ have been selected. In the example of \cref{fig: Illustration lemma existence swap}, $K = 3$.
 	
 	To construct the new decomposition, some pairs of edges among the $K$ selected are exchanged. They are also chosen by induction. The idea is to exchange at each step a pair $(u_iv_i, w_iz_i)$ whose edge $u_iv_i$ belongs to a cycle in $H$. If so, removing $u_iv_i$ from $H$ preserves its connected components and then, adding $w_{i}z_{i}$ to $H$ either connects two connected components or creates a new cycle but not in $H_i$: in some $H_{j}$ with $j < i$. Concretely, let $\sigma(i)$ denote the index of the pair chosen at step $i$. The first pair is the last selected pair: $\sigma(0) = K-1$. Then, while $z_{\sigma(i)} \in U_0$, the $(i+1)$-th index is $\sigma(i+1) = \max\set{j \mid z_{\sigma(i)} \in U_j}$. If $z_{\sigma(i)} \notin U_0$, the induction stops as $U_0$ has been connected with its complementary. If $z_{\sigma(i)} \in U_0$, then exchanging $u_{\sigma(i)}v_{\sigma(i)}$ with $w_{\sigma(i)}z_{\sigma(i)}$ creates a new cycle passing through $u_{\sigma(i+1)}v_{\sigma(i+1)}$. For example, consider again the decomposition introduced in \cref{fig: Illustration lemma existence swap}, two exchanges are performed. The first exchange between $u_2v_2$ and $w_2z_2$ creates a cycle passing through $u_0v_0$ since $z_2 \in U_0$ but $z_2 \notin U_1$. The second exchange between $u_0v_0$ and $w_0z_0$ connects the two components of $H$ and consequently stops the induction. The initial decomposition $(G,H)$ and the decompositions after each exchange are illustrated in \cref{fig: Illustration of Lemma induction decomposition}: $(G',H')$ is the decomposition after the first exchange and $(G'', H'')$ the decomposition after the second. By construction, each exchange preserves the minimal rigidity of $G$.
 \end{proof}
 	
 	\begin{figure}
 		\centering
 		\null\hfill
 		\begin{subfigure}[t]{0.30\textwidth}
 			\centering
 			\resizebox{\linewidth}{!}{\input{fig/decomposition_H0.pgf}}
 			\caption{$H$}
 		\end{subfigure}
 		\hfill
 		\begin{subfigure}[t]{0.30\textwidth}
 			\centering
 			\resizebox{\linewidth}{!}{\input{fig/decomposition_H1.pgf}}
 			\caption{$H'$}
 		\end{subfigure}
 		\hfill
 		\begin{subfigure}[t]{0.30\textwidth}
 			\centering
 			\resizebox{\linewidth}{!}{\input{fig/decomposition_H2.pgf}}
 			\caption{$H''$}
 		\end{subfigure}
 		\hfill\null\\
 		\null\hfill
 		\begin{subfigure}[t]{0.30\textwidth}
 			\centering
 			\resizebox{\linewidth}{!}{\input{fig/decomposition_G0.pgf}}
 			\caption{$G$}
 		\end{subfigure}
 		\hfill
 		\begin{subfigure}[t]{0.30\textwidth}
 			\centering
 			\resizebox{\linewidth}{!}{\input{fig/decomposition_G1.pgf}}
 			\caption{$G'$}
 		\end{subfigure}
 		\hfill
 		\begin{subfigure}[t]{0.30\textwidth}
 			\centering
 			\resizebox{\linewidth}{!}{\input{fig/decomposition_G2.pgf}}
 			\caption{$G''$}
 		\end{subfigure}
 		\hfill\null
 		\caption{Illustration of \cref{lem: Induction decomposition}.}
 		\label{fig: Illustration of Lemma induction decomposition}
 	\end{figure}

\ref{it: Direct sense} can now be proved.
\begin{proof}[Proof of \ref{it: Direct sense}]
	Let $\tilde \Gamma$ be, \Wlog, a minimally rigid conic graph (the redundant edges can be placed in any Euclidean graph).
	
	By \cref{lem: Existence of minimially rigid Euclidean sub-graph}, a decomposition $(G, H)$ of $\tilde \Gamma$ with $G$ minimally rigid exists.
	If the graph $H$ is connected then the proof is over. Otherwise, $H$ has $k \ge 2$ components, then using \cref{lem: Induction decomposition}, by a trivial induction, there exists a decomposition $(G', H')$ of $\tilde \Gamma$ with $G$ minimally rigid and $H$ connected.
\end{proof}

\section{Discussions}\label{sec: Discussions}

Two characterizations of generic infinitesimal rigidity have been introduced: the first for unidimensional conic frameworks and the second for multidimensional conic frameworks. Although different, they are based on a similar decoupling of the space and the bias variables: the positions are constrained by a Euclidean rigid graph while the biases are simply constrained by a connected graph. To reach the maximum rank and to ensure infinitesimal rigidity, those two graphs must generate complementary constraints, \ie{} their constraints must be independent. In the unidimensional case, this comes from the distinction between increasing and decreasing arcs while in the multidimensional case, it is a consequence of the linear independence between distances and coordinates.

Conic frameworks are truly a new concept. To first state the obvious, a $d$-dimensional conic framework is not simply a Euclidean framework of dimension $d+1$. If so, since rigid conic graphs are characterized by a condition on their underlying rigid Euclidean graphs, there would be an induction construction of rigid Euclidean graphs in any dimension. Unfortunately, there is no known characterization of Euclidean rigidity in dimension $d > 2$. The main difference between a $d$-dimensional conic graph and a $(d+1)$-dimensional Euclidean graph comes from the additional entries in rigidity matrices. For conic graphs, these entries are distances which are generically linearly independent. In contrast, for $(d+1)$-dimensional Euclidean graph the additional entries are dependent: \eg{} if the additional spatial variable is $\theta$, the additional entries are in the form of $\theta_u - \theta_w$ and for example: $\theta_1 - \theta_2$, $\theta_2 - \theta_3$ and $\theta_3 - \theta_1$ are clearly dependent. Furthermore, conic frameworks are also different from hyperbolic frameworks described in \cite{gortler_2014_generic}: the absence of absolute value around the bias difference in the pseudo-range equation makes the constraint asymmetrical. This also allows the graph to have pair of vertices connected by two arcs. Interestingly enough, in multidimensional cases, this asymmetry has no impact on infinitesimal rigidity: two frameworks with equivalent graphs have the same infinitesimal rigidity. However, if they are flexible, their flexing may be different as illustrated by the conic frameworks of \cref{fig: Conic frameworks}.

One of the greatest interests of the conic paradigm is the preservation of flight formation. First as mentioned in introduction, its reduces the minimal number of pseudo-range constraints required with respect to a classical SDS TWR method. Remind that SDS TWR requires at least $2S_e(n,d) \sim_n 2dn$ pseudo-range constraints whereas the conic method only requires $S(n,d) \sim_n (d+1)n$. Consequently, when used in the plane, SDS TWR method performs at least about $25\%$ of redundant measurements and about $33\%$ in 3D space. Furthermore, from an implementation point of view, the pseudo-range approach has another advantage: some agents can be only broadcasting. In the context of formation persistence \cite{hendrickx_2007_directed}, every constraint is maintained by only one agent, called the \emph{follower}. If the graph has some good properties the whole formation is preserved. This technique greatly simplifies the control. With the SDS TWR approach, an agent having several followers has to interact with every one of them to compute the distances. When the number of followers increases, the update rate necessarily decreases, which may induce a loss of precision. With the conic method in contrast, a agent having several followers may have no interaction with them: he could simply broadcast its position and bias, then, each follower could compute the pseudo-range without any feedback. This approach allows significant scale-up in the system as the number of followers would not be limited by the channel capacity.

From a computational point of view, testing infinitesimal rigidity is simple for unidimensional frameworks. It requires to divide the arcs according to their orientation and then apply twice a connectivity test algorithm to both $G_+$ and $G_-$. These three steps may be realized with a complexity of $O(\card{E})$, with a bread search first algorithm for example. In the multidimensional case, infinitesimal rigidity is a property of the conic graph. Rigidity of conic graphs relies on their underlying rigid Euclidean graphs and underlying connected graphs. When $d = 2$, several efficient algorithms have been developed to test rigidity of Euclidean graphs with a complexity of $O(n^2)$ \cite{imai1_985_combinatorial, hendrickson_1992_conditions, gabow_1992_forests, jacobs_1997_algorithm}. Unfortunately, the number of spanning trees in a graph may increase exponentially and therefore testing every possible decomposition would be inefficient. To bypass this issue, the construction of the decomposition of the conic graph presented could be implemented. The construction of the initial Euclidean rigid graph may be realized using the pebble-game algorithm of \cite{jacobs_1997_algorithm} and the construction of the sets $\rG(uv)$ using the algorithm introduced in \cite{berg_2003_algorithms} to find the redundantly rigid components of a graph. This will be the focus of further works. Randomized algorithm may also be considered to test rigidity of conic graphs. This approach was already proposed for Euclidean frameworks \cite{hendrickson_1992_conditions}. The idea is to randomly generate a configuration and computing the rank of its rigidity matrix. For example the algorithm introduced in \cite{gortler_characterizing_2010} may be adapted to test conic infinitesimal rigidity.

Finally, the limits of infinitesimal rigidity must be underlined. Infinitesimal rigidity considers only instantaneous velocities of conic frameworks. As a consequence, it ensures that no smooth deformation of the framework exists. However, it is weaker than rigidity which considers every flexing (smooth or not). For Euclidean frameworks, two stronger results exist. First, infinitesimal rigidity implies rigidity and second, those two concepts are equivalent for generic Euclidean frameworks, see \eg{} \cite{asimow_rigidity_1978}. However, the proofs of these two results use elements from differential geometry that are beyond the scope of this paper. Therefore, they are, for now, only conjectured for conic frameworks.
\begin{conjecture}
	Let $(\Gamma, p)$ be a conic framework.
	
	If $(\Gamma, p)$ is minimally rigid, then $(\Gamma, p)$ is rigid.
\end{conjecture}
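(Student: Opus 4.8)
The plan is to transport the classical Asimow--Roth argument \cite{asimow_rigidity_1978} to the conic setting, with the pseudo-range measurement map playing the role of the squared-edge-length map. Define $f_\Gamma\colon \R^{(d+1)n}\to\R^{\card{E}}$ by sending a configuration to the tuple of pseudo-ranges along the arcs of $\Gamma$, and $f_K\colon \R^{(d+1)n}\to\R^{n(n-1)}$ the analogous map for the complete directed graph on $V$ (equivalently, via the invertible change of variables recorded after \cref{def: Congruence}, the map listing all inter-agent distances and bias differences). Both maps are $C^\infty$ on the open collision-free set $\Omega=\{p\mid x_u\neq x_w\text{ whenever }u\neq w\}$, and by the differentiation carried out to obtain \eqref{eq: Arc infinitesimal constraint} the Jacobian $\mathrm{d}(f_\Gamma)_p$ at a point $p\in\Omega$ equals $D(\Gamma,p)^{-1}M(\Gamma,p)$ up to the invertible diagonal rescaling $D(\Gamma,p)$; in particular $\rank\mathrm{d}(f_\Gamma)_p=\rank M(\Gamma,p)$, and likewise for $f_K$.

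First I would establish regularity at $p$. If $(\Gamma,p)$ is minimally rigid then $p$ has distinct points, $\card{E}=S(n,d)$, and $\rank M(\Gamma,p)=S(n,d)$, so $\mathrm{d}(f_\Gamma)_p$ is surjective and $p$ is a regular point of $f_\Gamma$. Since the arcs of $\Gamma$ are among the arcs of the complete directed graph, the rows of $\mathrm{d}(f_K)_p$ contain (up to nonzero scalings) those of $\mathrm{d}(f_\Gamma)_p$, so $\rank\mathrm{d}(f_K)_p\ge S(n,d)$; combined with the rank bound \eqref{eq: Bound rank}, which holds at every configuration, this forces $\rank\mathrm{d}(f_K)_p=S(n,d)$, so $p$ is regular for $f_K$ as well. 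By the regular value theorem, $f_\Gamma^{-1}(f_\Gamma(p))$ and $f_K^{-1}(f_K(p))$ are embedded submanifolds of $\Omega$ near $p$, both of dimension $(d+1)n-S(n,d)$ (which equals $\binom{d+1}{2}+1$ when $n\ge d+1$, i.e.\ the dimension of the space of trivial instantaneous velocities).

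Next I would show these two fibres coincide near $p$. By \cref{def: Congruence}, two conic frameworks with configurations $p,p'$ are congruent precisely when $f_K(p')=f_K(p)$, so $f_K^{-1}(f_K(p))\subseteq f_\Gamma^{-1}(f_\Gamma(p))$. Two embedded submanifolds of equal dimension, one contained in the other, agree on a neighbourhood of any common point: by invariance of domain the smaller one is open in the larger, and $f_K^{-1}(f_K(p))$ is closed in $\R^{(d+1)n}$, hence clopen in a connected neighbourhood of $p$ inside $f_\Gamma^{-1}(f_\Gamma(p))$. So there is a neighbourhood $U\subseteq\Omega$ of $p$ with $f_\Gamma^{-1}(f_\Gamma(p))\cap U=f_K^{-1}(f_K(p))\cap U$. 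Now let $P$ be any motion of $(\Gamma,p)$; by continuity there is $t_0>0$ with $P(t)\in U$ for all $t\in[0,t_0]$, and since $P$ preserves the pseudo-ranges on the arcs, $P(t)\in f_\Gamma^{-1}(f_\Gamma(p))\cap U=f_K^{-1}(f_K(p))$, i.e.\ $(\Gamma,P(t))$ is congruent to $(\Gamma,p)$ for every such $t$. Hence $P$ is not a flexing, and as $P$ was arbitrary, $(\Gamma,p)$ is rigid.

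The technical heart, and the reason the statement is only conjectured in the present paper, is the passage from these differential-geometric local facts to the purely topological notion of motion: a motion is merely continuous and cannot be differentiated, so one must route through the manifold structure of the fibres together with invariance of domain rather than a direct linearisation. Two points need genuine care. First, the measurement maps are smooth only off the collision set, so one must use $p\in\Omega$ (which holds since distinct agents were assumed) and shrink $U$ to stay collision-free, guaranteeing that the distances remain differentiable at the start of the motion. Second, one must check that minimal rigidity really localises at the given, possibly non-generic, $p$: surjectivity of $\mathrm{d}(f_\Gamma)_p$ is immediate from $\rank M(\Gamma,p)=\card{E}=S(n,d)$, and regularity of $f_K$ at $p$ uses only the pointwise bound \eqref{eq: Bound rank}, so no genericity is invoked. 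The part I expect to be the real obstacle is the \emph{converse} half of the Asimow--Roth dictionary needed for the full equivalence with generic rigidity --- that a rigid generic conic framework is a regular point of $f_\Gamma$ --- which appears to require the stratification of $f_\Gamma$ into constant-rank pieces and is not addressed by the present plan.
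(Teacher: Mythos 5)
The paper does not prove this statement: it is explicitly left as a conjecture, with the authors remarking that the proof would require differential-geometric tools beyond the paper's scope. So there is no in-paper argument to compare against; what you have written is exactly the Asimow--Roth route the authors allude to, and in outline it is sound. The reduction of rigidity (a statement about merely continuous motions) to the local coincidence of the two fibres $f_\Gamma^{-1}(f_\Gamma(p))$ and $f_K^{-1}(f_K(p))$, via equal dimension, invariance of domain, and closedness, is the right mechanism, and your dimension count $(d+1)n-S(n,d)=\binom{d+1}{2}+1$ is correct.

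One step needs repair. You cannot invoke the regular value theorem for $f_K$: its differential at $p$ has rank $S(n,d)$, which is strictly less than the number $n(n-1)$ of target coordinates, so $\mathrm{d}(f_K)_p$ is not surjective and $p$ is not a regular point in the submersion sense. What saves the argument is the constant-rank theorem: the rank of $\mathrm{d}(f_K)$ is lower semicontinuous, and since it attains at $p$ the global upper bound $S(n,d)$ furnished by \eqref{eq: Bound rank} (which, as you note, holds at every configuration because $M_e$ has polynomial entries and the incidence matrix has rank at most $n-1$), the rank is constant equal to $S(n,d)$ on a neighbourhood of $p$; the rank theorem then exhibits $f_K^{-1}(f_K(p))$ as a manifold of dimension $(d+1)n-S(n,d)$ near $p$. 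With that substitution the argument closes. Two smaller points: the fibre of $f_K$ is closed in $\Omega$ rather than obviously in $\R^{(d+1)n}$ (though in fact limits of congruent configurations keep all pairwise distances positive, so it is closed in $\R^{(d+1)n}$ too), and the paper never defines ``minimally rigid'' for a \emph{framework}, only for graphs; your reading ($\card{E}=S(n,d)$ together with $\rank M(\Gamma,p)=S(n,d)$, which also forces the points of $p$ to be distinct since a coincident arc would contribute a zero row) is the one that makes the statement meaningful and should be stated explicitly.
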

\begin{conjecture}
	Let $(\Gamma, p)$ be a generic conic framework.
	
	$(\Gamma, p)$ is minimally rigid if and only if $(\Gamma, p)$ is rigid.
\end{conjecture}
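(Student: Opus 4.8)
The plan is to transplant the differential--topological argument of Asimow and Roth \cite{asimow_rigidity_1978, asimow_rigidity_1979} from Euclidean to conic frameworks, feeding it the two algebraic facts already established in this paper. Fix a generic conic framework $(\Gamma, p)$ with $n \ge d+1$ vertices (the degenerate cases $n \le d$, where the trivial--motion orbit has smaller dimension, require only a separate routine dimension count and are handled as in the Euclidean theory). On the configuration space $\R^{(d+1)n}$ introduce the \emph{edge measurement map} $f$, sending a configuration $q$ to the tuple $\left(r(q_u, q_w)\right)_{uw \in E}$ of pseudo-ranges along the arcs of $\Gamma$, and the \emph{complete measurement map} $g$, sending $q$ to all pseudo-ranges $\left(r(q_u, q_w), r(q_w, q_u)\right)_{1 \le u < w \le n}$. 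Because $p$ is generic its points are pairwise distinct, so on a neighbourhood $U$ of $p$ both $f$ and $g$ are real-analytic (the only singular locus of $r$ is the diagonal $x_u = x_w$). Up to a nonsingular diagonal rescaling of the rows, $df_p$ is the conic rigidity matrix $M(\Gamma, p)$ and $dg_p$ is $M(\tilde K_n, p)$, where $\tilde K_n = K_n \cup K_n$ is the complete conic graph; by \cref{the: Decomposition of conic rigid graph} applied with the Euclidean-rigid graph $G = K_n$ and the connected graph $H = K_n$, the conic graph $\tilde K_n$ is rigid in $\R^d$, so $\rank dg_p = S(n,d)$. Moreover, since two conic frameworks are congruent precisely when they differ by a trivial motion (a rigid spatial motion composed with a bias translation, a group of dimension $\binom{d}{2} + d + 1 = \binom{d+1}{2}+1$), the fibre $g^{-1}(g(p))$ agrees near $p$ with the trivial-motion orbit of $p$, a submanifold of dimension $(d+1)n - S(n,d) = \binom{d+1}{2}+1$.

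\textbf{Infinitesimal rigidity implies rigidity.} Suppose $\rank M(\Gamma, p) = S(n,d)$. Then $f$ is a submersion at $p$, so $f^{-1}(f(p))$ is, near $p$, a submanifold of dimension $(d+1)n - S(n,d)$. Since $g$ imposes more constraints than $f$ we have $g^{-1}(g(p)) \subseteq f^{-1}(f(p))$; both are submanifolds through $p$ of the \emph{same} dimension, hence their connected components through $p$ coincide on a smaller neighbourhood. Therefore every continuous motion of $(\Gamma, p)$ that remains near $p$ and preserves the arc pseudo-ranges in fact preserves all pairwise pseudo-ranges, i.e. stays congruent; a standard compactness/boundedness argument on the reachable configuration set then excludes a flex that escapes every neighbourhood, so $(\Gamma, p)$ is rigid. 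In particular the first conjecture follows: a minimally rigid conic graph attains the maximal rank $S(n,d)$ at every generic configuration, hence is infinitesimally rigid by \cref{def: Infinitesimal rigidity}, hence rigid.

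\textbf{Rigidity implies minimal rigidity (the converse).} Suppose $(\Gamma, p)$ is generic but not minimally rigid; then, given the rank bound, $r := \rank M(\Gamma, p) < S(n,d)$. By \cref{lem: Generic configuration maximizes the rank}, $r = \max_{p'} \rank M(\Gamma, p')$, so $\rank df$ is $\le r$ everywhere on $\R^{(d+1)n}$ and equals $r$ at $p$; by lower semicontinuity of rank it is \emph{constantly} $r$ on a neighbourhood $U$ of $p$. The constant-rank theorem then makes $f^{-1}(f(p)) \cap U$ a submanifold of dimension $(d+1)n - r$, which strictly exceeds $\dim g^{-1}(g(p)) = (d+1)n - S(n,d)$. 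Consequently $f^{-1}(f(p)) \cap U$ contains a connected arc through $p$ that leaves the trivial-motion orbit; reparametrising it on $[0,1]$ yields a motion $P$ with $P(0,\cdot) = p$, preserving all arc pseudo-ranges, and with $(\Gamma, P(t))$ non-congruent to $(\Gamma, p)$ for all $t > 0$ --- a flexing. Hence $(\Gamma, p)$ is not rigid, which together with the previous paragraph establishes the claimed equivalence.

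\textbf{Main obstacle.} The combinatorial-algebraic content is fully supplied by \cref{lem: Generic configuration maximizes the rank} and \cref{the: Decomposition of conic rigid graph}; the real work is the differential-topological bookkeeping that the rest of the paper has deliberately avoided. One must (i) confirm that $f$ and $g$ are genuinely $C^\infty$ (indeed $C^\omega$) near a generic $p$ in spite of the square roots in the pseudo-range function, which rests on genericity forcing $x_u \ne x_w$; (ii) verify that the group of trivial motions acts smoothly and that its orbit through a generic, affinely spanning $p$ is a submanifold of the asserted dimension, including the small cases $n \le d$; and (iii) execute the Asimow--Roth step "local fibres coincide $\Rightarrow$ global rigidity", which is precisely where a merely continuous (non-smooth) flex must be ruled out, typically via a path argument in the semialgebraic configuration set or a compactness argument on the set of configurations reachable with fixed arc pseudo-ranges. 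These are exactly the ingredients the authors flag as beyond the present scope, so the proof is in effect a blueprint for importing the Euclidean differential-rigidity toolkit into the conic category; once that machinery is in place, both conjectures follow as above.
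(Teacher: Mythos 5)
The paper does not prove this statement: it is stated explicitly as a conjecture, and the authors say that the required differential-geometric machinery is ``beyond the scope of this paper.'' So there is no proof in the paper to compare yours against; what you have written is a proposal for closing an open problem, and it should be judged as such. Your blueprint is the right one --- it is the classical Asimow--Roth regular-point argument, and you correctly identify the two paper-specific inputs that make it run in the conic setting: \cref{lem: Generic configuration maximizes the rank} (generic configurations attain the global maximum rank, hence $\rank df$ is locally constant near a generic $p$) and \cref{the: Decomposition of conic rigid graph} applied to $K_n \cup K_n$ (so that the fibre of the complete measurement map $g$ is, near $p$, exactly the trivial-motion orbit of dimension $\binom{d+1}{2}+1$). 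The identification of congruence classes with orbits of $E(d)\times\R$ is also correct, since equal pairwise distances force an ambient isometry and equal bias differences force a common bias translation. Modulo the differential-topological bookkeeping you yourself flag, both implications follow.

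That said, three points need repair before this is a proof rather than a blueprint. First, the conjecture as written places no restriction on $d$, but \cref{lem: Generic configuration maximizes the rank} is proved only for $d \ge 2$ (the field-extension lemma fails for $d=1$, where $D_{u,w}^2$ is a square in $\K$), and \cref{the: Unidimensional case rigidity} shows that unidimensional infinitesimal rigidity is not even a generic property of the graph; your argument therefore covers only $d \ge 2$, and the case $d=1$ needs a separate treatment (there, local constancy of the rank must come from local constancy of the sign pattern of the arcs rather than from genericity of the rank). Second, in the forward direction $f$ is a submersion at $p$ only when $\card{E} = S(n,d)$; for a redundantly infinitesimally rigid framework you need the constant-rank theorem there too, exactly as in your converse direction --- a one-line fix, but as written the claim is wrong. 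Third, the closing ``compactness/boundedness argument'' is a red herring under this paper's definitions: a flexing is required to be non-congruent for \emph{every} $t>0$, and continuity at $t=0$ already places $P(t)$ inside the neighbourhood where the fibre of $f$ coincides with the congruence class, so congruence for small $t$ is an immediate contradiction and no global argument is needed. (You are also implicitly, and correctly, reading ``minimally rigid'' in the conjecture as ``infinitesimally rigid''; under the paper's literal definition of minimal rigidity the converse implication would be false for redundantly rigid graphs.)
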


\appendix
\section{Proof of algebraic results}\label{app: Proof of algebra results} 

This appendix provides the proofs of the algebraic results. It has been isolated because it uses materials very different than those introduced in the body of the paper. For general concepts of field theory (field extension, extension order, etc.) please refer to \eg{} \cite{roman_2005_field}.

\subsection{Proof of \cref{lem: Field extension}}\label{sapp: Proof of lemma field extension}
\cref{lem: Field extension} is a particular application of the following result.

\begin{lemma}\label{lem: Field extension multidimensional rigidity}
	Let $\K = \Q(X_1, \dots, X_N)$ be the field of fractions in $N$ variables with coefficients in $\Q$ and $(R_1, \dots, R_m)$ be a family of functions such that:
	\begin{enumerate}[label=\textbf{H.\arabic*}, ref=Hypothesis H.\arabic*]
		\item\label{it: H1} $\forall i \in \{1, \dots, m\}$, $R_i^2 \in \K$;
		\item\label{it: H2} $\forall i \in \{1, \dots, m\}$, $R_i^2 \notin \K^{(2)}$, with $\K^{(2)}=\{P^2 \mid P\in \K\}$;
		\item\label{it: H3} $\forall I \in \rP(\{1, \dots, m\})\setminus\{\emptyset\}$, $R_I = \prod_{i \in I} R_i \notin \K$.
	\end{enumerate}
	Then $\L = \K\left[R_1, \dots, R_m\right]$ is a field and $\L/\K$ is a field extension of order $2^m$.
\end{lemma}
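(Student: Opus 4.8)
\textbf{Proof strategy for \cref{lem: Field extension multidimensional rigidity}.}

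The plan is to prove this by induction on $m$, at each step adjoining one more square root $R_m$ to the field $\L_{m-1} = \K[R_1, \dots, R_{m-1}]$ built from the first $m-1$ functions. The base case $m=0$ is trivial since $\L_0 = \K$ is a field of degree $1$ over itself. For the inductive step, assume $\L_{m-1}$ is a field with $[\L_{m-1}:\K] = 2^{m-1}$ and with $\K$-basis $\{R_I \mid I \subseteq \{1,\dots,m-1\}\}$. Since $R_m^2 \in \K \subseteq \L_{m-1}$, the element $R_m$ is algebraic over $\L_{m-1}$ with $\L_m = \L_{m-1}[R_m] = \L_{m-1}[X]/(X^2 - R_m^2)$, and this is a field iff $X^2 - R_m^2$ is irreducible over $\L_{m-1}$, i.e. iff $R_m \notin \L_{m-1}$. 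Granting this, $[\L_m : \L_{m-1}] = 2$, so $[\L_m : \K] = 2 \cdot 2^{m-1} = 2^m$ by the tower law, and multiplying the basis of $\L_{m-1}$ by $\{1, R_m\}$ gives the claimed basis $\{R_I \mid I \subseteq \{1,\dots,m\}\}$ of $\L_m$.

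The heart of the argument — and the step I expect to be the main obstacle — is showing $R_m \notin \L_{m-1}$. Suppose for contradiction that $R_m = \sum_{I \subseteq \{1,\dots,m-1\}} a_I R_I$ with $a_I \in \K$. I would square both sides. On the right, $R_I R_J = R_{I \triangle J} \cdot R_{I \cap J}^2$ and $R_{I\cap J}^2 \in \K$ by \ref{it: H1}, so squaring yields an element of $\L_{m-1}$ expressed in the natural basis; on the left $R_m^2 \in \K$. Comparing coordinates in the (unique, by the inductive hypothesis) natural basis of $\L_{m-1}$ forces most coefficients to vanish. Concretely, the $\emptyset$-coordinate gives $R_m^2 = \sum_I a_I^2 R_I^2$, and for each nonempty $K \subseteq \{1,\dots,m-1\}$ the $K$-coordinate gives $\sum_{I \triangle J = K} a_I a_J R_{I\cap J}^2 = 0$. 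A cleaner route: use the $\K$-algebra automorphism $\sigma$ of $\L_{m-1}$ that sends $R_j \mapsto -R_j$ for one well-chosen index $j$ and fixes the others (this exists because $\L_{m-1} \cong \L_{m-2}[X]/(X^2-R_j^2)$ and $X \mapsto -X$ is an automorphism, provided that intermediate field is indeed a field, which we have inductively). Averaging $R_m = \sum a_I R_I$ with its image under suitable such automorphisms isolates the ``pure'' parts; one shows $R_m$ must equal a single monomial $c\, R_{I_0}$ up to sign for some $I_0$, and then $R_m^2 = c^2 R_{I_0}^2 \in \K$ combined with $R_m^2 \notin \K^{(2)}$ (\ref{it: H2}) when $I_0 = \emptyset$, or $R_m R_{I_0} = c R_{I_0}^2 \in \K$ contradicting \ref{it: H3} applied to $I_0 \cup \{m\}$ when $I_0 \neq \emptyset$ — after verifying $R_m R_{I_0} = R_{I_0 \triangle \{m\}} R_{I_0 \cap \{m\}}^2$ and $\{m\} \cap I_0 = \emptyset$.

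Finally, to deduce \cref{lem: Field extension} from \cref{lem: Field extension multidimensional rigidity}, I would set $N = dn$ with variables $X_u^{(i)}$, take $R_i = D_{u_i,w_i}$ ranging over the $m = \card{E}$ edges of $E$, and check the three hypotheses: \ref{it: H1} holds since $D_{u,w}^2 = \sum_i (X_u^{(i)} - X_w^{(i)})^2$ is a polynomial; \ref{it: H2} holds because $\sum_i (X_u^{(i)} - X_w^{(i)})^2$ is not a perfect square in $\K$ when $d \ge 2$ (it is an irreducible polynomial, or at least squarefree, in $\Q[X]$ — this is where $d\ge 2$ is used); and \ref{it: H3} holds because a product $\prod_{uw \in F} D_{u,w}$ of distinct distance functions is irrational over $\K$ — squaring gives $\prod_{uw\in F}(\sum_i(X_u^{(i)}-X_w^{(i)})^2)$, a product of distinct irreducible polynomials, hence not a square, so the product is not in $\K$. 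These polynomial-factorization facts are the routine verifications I would not grind through here but would cite or dispatch briefly. The degree of $\L(E)$ over $\K$ is then $2^{\card E}$ with the natural basis $\{\prod_{uw\in F} D_{u,w} \mid F \in \rP(E)\}$ as asserted.
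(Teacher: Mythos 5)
Your proposal is correct, and the overall scaffolding (induction on $m$, tower law, reduction of everything to the single claim $R_m\notin\L_{m-1}$, which is equivalent to $R_m^2\notin\L_{m-1}^{(2)}$) matches the paper's proof in \cref{sapp: Proof of lemma field extension}. Where you genuinely diverge is in how that key non-membership is established. The paper writes a putative square root of $R_{k+1}^2$ in $\L_k$ as $A+BR_k$ with $A,B\in\L_{k-1}$ and case-splits: $AB\neq 0$ forces $R_k\in\L_{k-1}$, while $B=0$ (resp.\ $A=0$) transports the contradiction to the induction hypothesis applied to the \emph{modified} family $R_1,\dots,R_{k-1},R_{k+1}$ (resp.\ $R_1,\dots,R_{k-1},R_kR_{k+1}$) --- this is why the paper states $\mathbf{P}(k)$ universally over all admissible families, and it avoids any mention of automorphisms. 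Your cleaner route instead uses the $(\Z/2)^{m-1}$ worth of sign-flip automorphisms $\sigma_j$ of $\L_{m-1}$ to show that any element whose square lies in $\K$ must be $cR_{I_0}$ for some $c\in\K$ and $I_0\subseteq\{1,\dots,m-1\}$, and then kills the two cases $I_0=\emptyset$ and $I_0\neq\emptyset$ with \ref{it: H2} and \ref{it: H3} respectively; this is more conceptual and handles all of $\L_{m-1}$ at once, at the cost of first justifying that each $\sigma_j$ is a well-defined $\K$-automorphism (which, as you note, again leans on the inductive hypothesis applied to reordered subfamilies, so the universally quantified induction is not avoided, only relocated). Two small points to tighten: the isomorphism $\L_{m-1}[R_m]\cong\L_{m-1}[X]/(X^2-R_m^2)$ is only valid once you know $R_m\notin\L_{m-1}$, so the ``field iff irreducible'' phrasing is slightly circular as written (the quotient ring is a field iff the polynomial is irreducible, and $\L_{m-1}[R_m]$ would still be a field, just of the wrong degree, if $R_m\in\L_{m-1}$); and when you write $\sigma_j(R_m)=\pm R_m$ you are implicitly using that $\sigma_j$ preserves $R_m^2\in\K$ and that $\L_{m-1}$ is an integral domain. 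Your verification of the hypotheses for the distance functions in \cref{lem: Field extension} is more detailed than the paper's (which simply asserts they hold for $d\ge 2$), and correctly isolates where $d\ge 2$ enters, namely \ref{it: H2}.
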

\begin{proof}
	The proof is realized by induction over $m$. The property to prove is $\mathbf{P}(k)$: \guillemets{For any $R_1, \dots, R_k$ satisfying the three hypotheses, $\L = \K\left[R_1, \dots, R_k\right]$ is a field and $\L/\K$ is a field extension of order $2^k$.}
	
	\emph{Initialization.} For $k = 1$, let $R$ satisfy the three hypotheses. To prove that $\K[R]$ is a field, proving that every non-null element has an inverse is sufficient. Let $P \in \K[R]$, $P \ne 0$. Since $R^2 \in \K$, there exists $(A,B) \in \K^2$ with $(A,B) \ne (0,0)$ such that $P = A + BR$. If $B = 0$, $P = A \in \K$ therefore $P$ is invertible. If $B \neq 0$, using \ref{it: H2}, $R^2 \ne A^2/B^2$, therefore $A^2 - B^2 R^2 \neq 0$. Then, $(A - BR)/(A^2 - B^2R^2) \in \K[R]$ is the inverse of $P$. The extension is of order $2$ by \ref{it: H1} and \ref{it: H3}. Thus, $\mathbf{P}(1)$ is true.
	
	\emph{Induction step.} Let assume $\mathbf{P}(k)$ for $k \ge 1$ and prove $\mathbf{P}(k+1)$. Let $R_1, \dots, R_{k+1}$ be $k+1$ functions satisfying the three hypotheses. We denote $\L_k = \K[R_1, \dots, R_k]$. First, let us prove that $\L_{k+1}$ is a field. Proving that $R_{k+1}^2 \notin \L_{k}^{(2)}$ is sufficient since then, with the same arguments as for the initialization every non-null element of $\L_{k+1}$ would have an inverse.
	
	Let us assume by contradiction that $R_{k+1}^2 \in \L_{k}^{(2)}$. By induction hypothesis, $\L_{k} = \L_{k-1}[R_k]$. Therefore, there exist $A, B \in \L_{k-1}$ such that:
	\begin{equation}
		R_{k+1}^2 = \left(A + B R_{k}\right)^2 = A^2 + B^2 R_{k}^2 + 2AB R_k
	\end{equation}
	If $AB \neq 0$, then $R_k \in \L_{k-1}$ which contradicts the induction hypothesis. Then, necessarily $A$ or $B$ is null. If $B = 0$, then $R_{k+1}^2 = A^2 \in \L_{k-1}^{(2)}$ This also contradicts the induction hypothesis when considering the $k$ functions $R_1,\dots, R_{k-1}, R_{k+1}$. Therefore $A = 0$. If $A = 0$ then, $R_{k+1}^2 = B^2 R_k^2$ and $(R_{k+1}R_k)^2 = (B R_k^2)^2 \in \L_{k-1}^{(2)}$. Similarly, this also contradicts the induction hypothesis when considering the $k$ functions $R_1, \dots, R_{k-1}, R_k R_{k+1}$ (which satisfies the three hypotheses). Theses contradictions give that $R_{k+1}^2 \notin \L_{k}^{(2)}$ and thus, $\L_{k+1}$ is a field.
	
	To prove the order, let us use the induction hypothesis:
	\begin{equation}
		[\L_{k+1}: \K] = [\L_{k+1}: \L_{k}][\L_{k}: \K] = 2^k [\L_{k+1}: \L_k]
	\end{equation}
	Since $R_{k+1} \notin \L_k$ and $R_{k+1}^2 \in \L_k$,  $[\L_{k+1}:\L_k] = 2$ and $[\L_{k+1}: \K] = 2^{k+1}$. $\mathbf{P}(k+1)$ is true.
\end{proof}

\begin{proof}[Proof of \cref{lem: Field extension}]
	Let $E \subset \{uw \mid 1 \le u < w \le n\}$ be a set of edges and $m = \card{E}$.
	The set of $m$ distance functions $D_{u,w}$ do satisfy the three conditions of \cref{lem: Field extension multidimensional rigidity} when $d \ge 2$.
		
	Note however that when $d = 1$, the distance functions do not satisfy \ref{it: H2} of \cref{lem: Field extension multidimensional rigidity}.
\end{proof}

\subsection{Proof of \cref{lem: Inversibility algebric}}\label{sapp: Proof of lemma Inversibility algebric} 
\begin{proof}
	Let $E_H$ and $E_G$ be two disjoint sets of edges, $A$ be a square matrix of order $\card{E_H}$ whose entries are functions in $\L(E_G)$, and $p$ be a generic configuration.
	
	Let us denote $E = E_G \cup E_H$ and $B : p \mapsto D(H,p) + A(p)$.
	The entries of $B$ belong to $\L(E)$. Therefore, its determinant also belongs to $\L(E)$. The decomposition of $\det B$ on the natural basis of $\L(E)$ gives:
	\begin{equation}
		\det B = \sum_{F\in\rP(E)} \alpha_F \prod_{uw \in F} D_{u,w}
	\end{equation}
	where the $\alpha_F$ are the coordinates of $\det B$ on the natural basis of $\L(E)$.
	
	Furthermore, since $E_H$ and $E_G$ are disjoint and since the entries of $A$ are in $\L(E_G)$, by an easy induction on the cardinal of $E_H$, the coordinates associated to $E_H$ is $1$.
	
	Therefore $\det B \neq 0$. Then, by \cref{lem: Extension generic nullity}, $\det B(p)$ is not null and $B(p)$ is invertible.
\end{proof}

\section*{Acknowledgments}
We would like to thank Charlotte Hardouin from the \emph{Institut de Mathématiques de Toulouse} for her support on algebraic concepts, especially for the proof of \cref{lem: Field extension multidimensional rigidity}. We would also like to thank Christian Commault from the \emph{GIPSA-Lab} for his thorough reading of the paper.

\bibliographystyle{siamplain}
\bibliography{references}
\end{document}